\def\R{\mathbb R}
\def\N{\mathbb N}
\def\hat{\widehat}
\def\F{{\mathcal F}}
\renewcommand{\bar}{\overline}
\def\H{\mathcal H}
\def\sign{\mathrm{sign}\,}
\def\In{\mathcal{I}}
\def\Inl{\mathcal{J}}
\def\Inb{\partial{\mathcal{J}}}
\def\Out{\mathcal{O}}
\def\Outl{\mathcal{E}}
\def\V{\mathcal{V}}
\def\Yinf{\bar Y}
\def\<{\langle}
\def\>{\rangle}
\def\Rord{\mathcal{R}}
\def\W{\mathcal{W}}
\def\U{\mathcal{U}}
\newtheorem{Thm}{Theorem}
\newtheorem{Lem}[Thm]{Lemma}
\newtheorem{Cor}[Thm]{Corollary}
\newtheorem{Prop}[Thm]{Proposition}
\numberwithin{equation}{section}
\numberwithin{Thm}{section}
\theoremstyle{definition}
\newtheorem{Def}[Thm]{Definition}
\newtheorem{Rk}[Thm]{Remark}
\theoremstyle{remark}
\newtheorem*{Thm*}{Theorem}
\newtheorem*{Lem*}{Lemma}
\newtheorem*{Conj*}{Conjecture}
\newtheorem*{Cor*}{Corollary}
\newtheorem*{Def*}{Definition}
\newtheorem*{Prop*}{Proposition}
\newtheorem*{Exo*}{Exercise}
\newtheorem*{Exs*}{Examples}
\newtheorem*{Ex*}{Example}
\newtheorem*{Rk*}{Remark}
\newtheorem*{Rks*}{Remarks}
\author{Vincent Calvez}
\address{Unit\'e de Math\'ematiques Pures et
Appliqu\'ees, Ecole Normale Sup\'erieure de Lyon, CNRS UMR 5669, and
 project-team Inria NUMED, Lyon, France. E-mail: {\tt
 vincent.calvez@ens-lyon.fr}}
\author{Thomas Gallou\"et}
\address{Thomas O. Gallou\"et: CMLS, \'Ecole polytechnique, CNRS, Universit\'e Paris-Saclay, 91128 Palaiseau Cedex, France. E-mail: {\tt
 thomas.gallouet@polytechnique.edu}}
\subjclass{Primary: 35K57, 35B44 ; Secondary: 35K55, 35B40, 65M99, 65L05 }
 \keywords{Blow up, dichotomy theorem, degenerate Keller-Segel equations, discrete homogeneous functionals, drift diffusion equations}
\begin{document}
\title[Blow-up of homogeneous gradient flows]{
Blow-up phenomena for  gradient flows of discrete homogeneous functionals}

\begin{abstract}
We investigate gradient flows of some homogeneous functionals in $\R^N$, arising in the Lagrangian approximation of systems of self-interacting and diffusing particles. We focus on the case of negative homogeneity. In the case of strong self-interaction, the functional  possesses a cone of negative energy. It is immediate to see that solutions with negative energy at some time become singular in finite time, meaning that a subset of particles concentrate at a single point. Here, we establish that all solutions become singular in finite time for the class of functionals under consideration. The paper is completed with numerical simulations illustrating the striking non linear dynamics when initial data have positive energy.
\end{abstract}

\maketitle

\section{Introduction and main result}


We investigate a deterministic particle approximation of the gradient flow of homogeneous functionals in the family
\begin{equation}\mathcal G_m[\rho]  = \frac{1}{m-1} \int_{\R} \rho (x)^m \, dx - \frac{\chi}{m-1} \iint_{\R\times \R} |x-y|^{1-m}\rho(x)\rho(y)\, dx dy\, , \label{eq:freeenergy}
\end{equation}
where $\chi>0$ is the interaction coefficient, and $m\in (1,2)$ is the non linear diffusion exponent. Here, $\rho$ is a probability distribution function such that $\rho\in L^m\cap L^1(|x|^2dx)$.

Such functionals arise as free energy functionals for models of self-interacting, diffusing particles \cite{LiebLoss97,McCann97}, for instance the Keller-Segel model and its non-linear variants (see \cite{BCL09,CPS07,HiPa09,Breview13} and references therein). The approach we follow in this paper is currently restricted to the one-dimensional case. 

The condition $m>1$ expresses that the functional has negative homogeneity in a particular sense (see below), whereas the condition $m<2$ guarantees integrability of the interaction kernel. The latter condition will disappear as soon as we switch to a discrete problem.

Homogeneity of \eqref{eq:freeenergy} under mass-preserving dilations reads as follows,
\[  \mathcal G_m[\rho_\lambda]  = \lambda^{1-m} \mathcal G_m[\rho ]\, , \quad \text{where $\rho_\lambda(x) = \lambda^{-1}\rho(\lambda^{-1} x)$}\, . \]

After the usual Lagrangian transformation $\rho\mapsto X$, where $X:(0,1)\to \R$ denotes the pseudo-inverse of the cumulative distribution function associated with  $\rho$ \cite{topicsinOT,GosTos06,CC12}, the functional \eqref{eq:freeenergy} reads equivalently, 
\begin{equation} \label{eq:energy lagrangian} 
\F_{m}(X) =  \frac{1}{m-1}\int_{(0,1)}  \left(\dfrac {dX}{dp}(p)\right)^{1-m}\, dp - \frac{\chi}{m-1} \iint_{(0,1)^2} |X(p)- X(q)|^{1-m}\, dpdq\, .
\end{equation}
Intuitively, $X$ encodes the position of particles with respect to the partial mass $p\in (0,1)$. Homogeneity of \eqref{eq:energy lagrangian} reads as usual, $\F_{m}(\lambda X) = \lambda^{1-m}\F_{m}(X)$.

Consider the gradient flow of $\mathcal G_m$, resp. $\F_{m}$ in the Wasserstein metric space, resp. the $L^2$ Hilbert space (see \cite{Otto01, oldandnew, AGSbook} for precise concepts and definitions), which is written in the abstract way as follows,
\[ \dot X(t) = - \nabla \F_m(X(t))\, . \]
Then, it is an immediate consequence of homogeneity that 
\begin{equation}  \label{eq:1st derivative X^2} \dfrac d{dt}\left( \frac12 |X(t)|^2\right) = X(t)\cdot \dot X(t) = - X(t)\cdot \nabla \F_m(X(t)) = (m-1)\F_m(X(t))\, . \end{equation}
Moreover, we deduce from the gradient flow structure that,
\begin{equation} \label{eq:2nd derivative X^2} \dfrac {d^2}{dt^2}\left( \frac12 |X(t)|^2\right) = (m-1) \dfrac d{dt} \F_m(X(t)) = -(m-1)|\nabla \F_m(X(t))|^2\, . \end{equation}
Therefore, $|X|^2$ is concave along any trajectory. If it is decreasing at some time, then it must vanishes in finite time. The latter condition is equivalent to the existence of $t_0$ such that $\F_m(X(t_0))<0$. We refer to \cite{BCL09} for a similar result in the Keller-Segel problem with non linear diffusion.

The following question arises naturally: does there exist $t_0$ such that $\F_m(X(t_0))<0$ for any initial data? Alternatively speaking, do solutions always blow-up? A first condition is clearly that the energy functional $\F_m$ possesses a cone of negative energy. In the class of functionals \eqref{eq:energy lagrangian}, this requires $\chi$ to be large enough, namely $\chi>\chi_m$, for some threshold value $\chi_m$ \cite{BCL09}.

The purpose of this work is to answer positively this question, but for a finite dimensional approximation of \eqref{eq:energy lagrangian}, which complies with the same algebra as $\F_m$. More precisely, it is required that similar identities as \eqref{eq:1st derivative X^2} and \eqref{eq:2nd derivative X^2} hold for the approximate system.

Let $m>1$. The discrete energy functional we study here is deduced from a finite difference approximation of $X(p)$ on a regular grid. Let $(X_i)_{1\leq i\leq N}$ be the positions of $N$ ordered particles sharing equal mass $1/N$, such that 
$X_1<X_2<\dots<X_N$.  
The discrete functional is defined by analogy with
\eqref{eq:energy lagrangian}:
\begin{equation}\label{EnergieKelSelDiscret} 
\F^N_{m}\left(X\right)=\frac{1}{m-1}\sum_{i=1}^{N-1} \left(X_{i+1}-X_i\right)^{1-m} -\frac{\chi}{m-1} \sum_{1\le i\neq j\le N}  \vert X_j-X_i \vert^{1-m} \,,
\end{equation}
where we have renormalized $\chi$ in order to absorb prefactors involving the fraction of mass $1/N$.

\begin{Rk}
For the sake of comparison, we recall that the limiting case $m\to 1$ was studied in \cite{BCC08,KS1Dp}. In this case, the functional reads as follows,
\begin{equation}\label{eq:log energy} 
\F^N_{1}\left(X\right)=-\sum_{i=1}^{N-1} \log\left(X_{i+1}-X_i\right) + \chi \sum_{1\le i\neq j\le N}  \log \vert X_j-X_i \vert \,,
\end{equation}
We refer to the logarithmic case, or $m = 1$.
\end{Rk}
 
As the functional is translation invariant, we can assume w.l.o.g. that the particles are centered at 0. Accordingly, we define
\[
\Rord^N=\left\{ (X_i)_{1\leq i\leq N} \in \R^N \left| X_1<X_2<\dots<X_N \mbox{ and } \sum_{i=1}^N X_i=0  \right.  \right\}\, . 
\]
We denote by $|\cdot|$ the euclidean norm on $\R^N$.

Notice that $\F^N_{m}$ has the same homogeneity as $\F_N$. In addition, the threshold $\chi_m(N)$ is defined as
\[\dfrac1{\chi_m(N)} = \max_{X \in \Rord^N} \dfrac{\sum_{1\le i\neq j\le N}  \vert X_j-X_i \vert^{1-m}}{\sum_{i=1}^{N-1} \left(X_{i+1}-X_i\right)^{1-m}}\, .  \]
We can deduce easily from Jensen's inequality that the supremum is finite, therefore $\chi_m(N) >0$.

The euclidean gradient flow of $\F^N_{m}$ writes
\begin{equation}\label{flotgradientdiscretexplicite}
\dot X_i = -{\left(X_{i+1}-X_i\right)^{-m}} +  {\left(X_{i}-X_{i-1}\right)^{-m}} + \quad 2 \chi  \sum_{j \neq i } \sign(j-i){\left|X_j-X_i\right|^{-m}}\,,
\end{equation}
complemented with the dynamics of the extremal points
\begin{equation}\label{flotgradientdiscretexplicite-boundary}
\left\{\begin{array}{ccl}\dot X_1 =& -{\left(X_{2}-X_1\right)^{-m}} + \quad 2 \chi  \sum_{j \neq 1 } {\left|X_j-X_1\right|^{-m}}  \medskip\\
\dot X_N=&    {\left(X_{N}-X_{N-1}\right)^{-m}} - \quad 2 \chi  \sum_{j \neq N }{\left|X_j-X_N\right|^{-m}}
\end{array}\right.
\end{equation}

Before we state our main result, we introduce  a decreasing family of threshold values $(C_p)$ parametrized by the integers $p = 1\dots N$, for a given $m>1$.
\begin{equation}
\dfrac1{C_p} = \max_{X \in \Rord^N} \dfrac{\sum_{1\le i\neq j\le p}  \vert X_j-X_i \vert^{1-m}}{\sum_{i=1}^{p-1} \left(X_{i+1}-X_i\right)^{1-m}}\, .  
\end{equation}
Accordingly, we have $C_N = \chi_m(N)$. 
\begin{Def}
For the discrete problem \eqref{flotgradientdiscretexplicite}--\eqref{flotgradientdiscretexplicite-boundary} we refer to the subcritical regime when $\chi < C_N$, and to the super critical regime when $\chi >C_N$. The critical case refers to $\chi = C_N$.
\end{Def}
The next couple of Theorems give an almost complete description of possible dynamics of \eqref{flotgradientdiscretexplicite}--\eqref{flotgradientdiscretexplicite-boundary}.

Firstly, global existence holds in the subcritical regime.
\begin{Thm}[Subcritical regime]\label{thm:subcase}
Let $m>1$, and $\chi<C_N$. Let $X_0\in \Rord^N$, and let $X$ be the solution of the system \eqref{flotgradientdiscretexplicite}--\eqref{flotgradientdiscretexplicite-boundary} with initial data $X(0) = X_0$. Then, the maximal time of existence of $X$ is $T = +\infty$.
\end{Thm}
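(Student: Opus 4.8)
The plan is to rule out any finite-time singularity by showing that, in the subcritical regime, the solution remains in a fixed compact subset of $\Rord^N$ on every bounded time interval, so that the standard continuation criterion for the (locally Lipschitz) vector field \eqref{flotgradientdiscretexplicite}--\eqref{flotgradientdiscretexplicite-boundary} forces $T=+\infty$. The field is smooth on the open set $\Rord^N$ where all gaps are positive, and the maximal time could only be finite in two ways: either two adjacent particles collide (a gap $X_{i+1}-X_i\to 0$, where \eqref{flotgradientdiscretexplicite} is singular), or the cloud escapes to infinity. I would exclude both with a single a priori estimate drawn from the energy.

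First I would exploit the very definition of $C_N$. Writing $D(X)=\sum_{i=1}^{N-1}(X_{i+1}-X_i)^{1-m}\ge 0$ for the diffusion part, the case $p=N$ of the definition of $C_p$ says exactly that $\sum_{1\le i\neq j\le N}|X_j-X_i|^{1-m}\le C_N^{-1}D(X)$ for every $X\in\Rord^N$. Inserting this into \eqref{EnergieKelSelDiscret} and using $\chi,m-1>0$ yields the coercivity bound
\[
\F^N_{m}(X)\ \ge\ \frac{1}{m-1}\Big(1-\frac{\chi}{C_N}\Big)\,D(X)\ \ge\ 0,
\]
whose prefactor is strictly positive precisely because $\chi<C_N$. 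Thus, in the subcritical regime, $\F^N_m$ has no cone of negative energy and controls the diffusion part from above.

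Next I would bring in the gradient-flow structure. Along the flow the energy is nonincreasing, $\frac{d}{dt}\F^N_m(X(t))=-|\nabla\F^N_m(X(t))|^2\le 0$, so $\F^N_m(X(t))\le E_0:=\F^N_m(X_0)$ on the maximal interval $[0,T)$. Combined with the coercivity bound, this gives a uniform control $D(X(t))\le M:=(m-1)E_0\,(1-\chi/C_N)^{-1}$. Since every summand of $D$ is nonnegative, each gap satisfies $(X_{i+1}-X_i)^{1-m}\le M$, i.e.\ $X_{i+1}-X_i\ge M^{1/(1-m)}=:\delta>0$ uniformly in $t$ (recall $1-m<0$); adjacent collisions are therefore excluded. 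To bound the spatial extent I would invoke the discrete analogue of \eqref{eq:1st derivative X^2}, valid because $\F^N_m$ is homogeneous of degree $1-m$ (Euler's identity): $\frac{d}{dt}\big(\tfrac12|X(t)|^2\big)=(m-1)\F^N_m(X(t))\in[0,(m-1)E_0]$, so that $|X(t)|^2\le |X_0|^2+2(m-1)E_0\,t$ grows at most linearly. Together with the centering constraint $\sum_i X_i=0$, this bounds every $|X_i|$, hence every gap from above as well, on each finite interval.

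Finally I would conclude by compactness: on any $[0,T']$ with $T'<\infty$ the trajectory lives in the set $\{X\in\Rord^N:\ X_{i+1}-X_i\ge\delta,\ |X|\le R(T')\}$, a fixed compact subset of the open domain on which the field is smooth. The continuation criterion then forbids $T<\infty$. The argument is structurally short; the one genuine input is the coercivity estimate, and the only point demanding care is verifying that the a priori bounds really confine the solution to a compact subset of $\Rord^N$ — that is, that they \emph{simultaneously} exclude collisions (lower gap bound) and divergence of the positions (linear growth of $|X|$), these being the only mechanisms through which the maximal time could be finite.
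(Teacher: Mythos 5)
Your proof is correct, and it takes a more direct route than the paper's. The paper obtains Theorem \ref{thm:subcase} as a corollary of Proposition \ref{blowupaumoinsk}: for $\chi<C_k$ any finite-time weak blow-up set must contain at least $k+1$ particles, so with $k=N$ no blow-up set can exist. That proposition is proved by \emph{localizing} the energy on a putative blow-up set of $p\le k$ particles, controlling the interaction with the exterior particles via the separation estimate \eqref{courteportee} and Young's inequality to show that the local energy $\F^p_m$ grows at most linearly in time, and only then invoking the inequality defining $C_p$ to bound the gaps from below. Your argument is the global ($p=N$) specialization of the same idea, and it is simpler precisely because there are no exterior cross terms to control: the full energy is genuinely nonincreasing, so the coercivity bound $\F^N_m(X)\ge \left(1-\chi/C_N\right)\U_m(X)$ (equivalently your bound in terms of $D(X)$) immediately yields a time-uniform lower bound on all gaps. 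You additionally invoke the second-moment identity to rule out escape to infinity, a point the paper leaves implicit; it is in any case automatic, since once the gaps (hence all pairwise distances) are bounded below the vector field is bounded, so $|X(t)|$ grows at most linearly. What the paper's localized argument buys in exchange for its extra technicality is information that your global argument cannot give and that is needed in the supercritical regime, namely that every weak blow-up set contains at least $k+1$ particles whenever $\chi<C_k$.
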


\begin{Thm}[Super critical regime]\label{thm:explosionintro}
Let $m>1$, and $\chi>C_N$. Let $X_0\in \Rord^N$, and let $X$ be the solution of the system \eqref{flotgradientdiscretexplicite}--\eqref{flotgradientdiscretexplicite-boundary} with initial data $X(0) = X_0$. Then, up to a finite range of values for $\chi$, the maximal time of existence is finite, $T<+\infty$, without any condition on $X_0$. More precisely,
\begin{enumerate}
\item If $\chi\notin (C_p)_{p = 1\dots N-1}$, then $X$ blows up in finite time.
\item If there exists an integer $p \in [1,N-1]$ such that $\chi = C_p$, then either $X$ blows up in finite time or the renormalized location $Y=\frac{X}{|X|}$ blows up in infinite time. Moreover, any relative blow-up set of $Y$ obtained after extraction of a subsequence $Y(t_n)$, contains exactly $p$ particles. In the latter case, the restriction of  $Y(t_n)$ to the blow-up set converges after subsequent renormalization to the unique critical point of $\F^p_m$. 
\end{enumerate}
\end{Thm}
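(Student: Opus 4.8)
The plan is to reduce the flow to a gradient flow on the sphere via a polar decomposition, and to couple the resulting picture with the concavity of the second moment. Write $X = rY$ with $r = |X|$ and $Y \in \Sigma := \{Y \in \Rord^N : |Y| = 1\}$, and set $g := \F^N_m|_\Sigma$. Since $\F^N_m$ has the same negative homogeneity as in \eqref{eq:energy lagrangian}, its gradient is homogeneous of degree $-m$ and Euler's identity gives $Y\cdot\nabla\F^N_m(Y) = (1-m)g(Y)$. A direct computation then yields
\[ \dot r = (m-1)\,r^{-m} g(Y), \qquad \dot Y = -\,r^{-m-1}\,\nabla_\Sigma g(Y), \]
so that in rescaled time $d\tau = r^{-m-1}\,dt$ the shape follows the pure gradient flow $Y' = -\nabla_\Sigma g(Y)$ on $\Sigma$; in particular $g(Y(t))$ is non-increasing in $t$. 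On the other hand, the homogeneity identities \eqref{eq:1st derivative X^2}--\eqref{eq:2nd derivative X^2} hold verbatim for $\F^N_m$, so $I(t) := \tfrac12 |X(t)|^2$ satisfies $\dot I = (m-1)\F^N_m(X)$ and $\ddot I \le 0$. Since $I\ge 0$ is concave, as soon as $\F^N_m(X(t_0)) < 0$ at a finite time the derivative $\dot I$ stays negative and $I$ would reach $0$ in finite time; hence the solution must cease to exist in finite time. This is the basic dichotomy: \emph{either the energy stays nonnegative for all times, or blow-up occurs in finite time.}

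Suppose then $T = +\infty$. By the dichotomy $\F^N_m(X(t))\ge 0$ for all $t$, and since $\F^N_m$ is non-increasing it decreases to some $L\ge 0$. I claim $L = 0$: if $L>0$ then $\dot r = (m-1)r^{-1}\F^N_m(X) \ge (m-1)Lr^{-1}$ forces $r\to\infty$, whence $g(Y) = r^{m-1}\F^N_m(X)\ge r^{m-1}L\to +\infty$, contradicting that $g(Y(t))$ is non-increasing. Thus $\F^N_m(X(t))\searrow 0$, and $g(Y(t))\searrow g_\infty$ for some $g_\infty \ge 0$ (nonnegativity since $g(Y) = r^{m-1}\F^N_m(X)\ge 0$).

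It remains to identify the asymptotic shape, whose $\omega$-limit along $Y'=-\nabla_\Sigma g$ at level $g_\infty$ consists of critical points of $g$. \emph{(i) Interior limit:} $Y$ converges to a critical point $Y^*$ with all particles distinct. Using the uniqueness of the critical point of $\F^N_m$ on $\Sigma$, the only candidate is the maximizer defining $C_N$, whose energy equals $\tfrac{1}{m-1}\sum(\cdots)^{1-m}(1-\chi/C_N) < 0$ because $\chi>C_N$; this contradicts $g_\infty\ge 0$. \emph{(ii) Boundary limit:} a block of $p$ consecutive particles concentrates at scale $\delta\to 0$ with internal shape $Z$. Since the distances to the remaining particles stay bounded below, only the internal block is singular, and a scaling computation yields $g(Y) = \delta^{1-m}\F^p_m(Z) + O(1)$. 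The induced internal flow drives $Z$ to the unique critical point $Z^*_p$ of $\F^p_m$, for which $\F^p_m(Z^*_p) = \tfrac{1}{m-1}\sum(\cdots)^{1-m}(1-\chi/C_p)$. As $\delta^{1-m}\to+\infty$, the limit $g(Y)\to g_\infty$ can be finite only if $\F^p_m(Z^*_p) = 0$, that is $\chi = C_p$. Hence: if $\chi\notin(C_p)_{p=1}^{N-1}$ no boundary limit is admissible and, together with (i), global existence is impossible, which proves finite-time blow-up (case 1); if $\chi = C_p$ the only admissible global scenario is the infinite-time concentration of exactly $p$ particles whose renormalized block converges to $Z^*_p$ (case 2).

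The main obstacle is the rigorous justification of scenario (ii): turning the formal expansion $g = \delta^{1-m}\F^p_m(Z)+O(1)$ into genuine asymptotics along the true trajectory, and proving that the concentrating set is a \emph{single} block of \emph{exactly} $p$ consecutive particles whose renormalized configuration converges to $Z^*_p$. This will require (a) a {\L}ojasiewicz-type gradient inequality for the real-analytic functional $g$ away from collisions, to upgrade the convergence of $g(Y(t))$ into convergence of the shape and to pin down the $\omega$-limit; (b) the uniqueness of the critical point of $\F^p_m$, used both here and in (i); and (c) excluding several simultaneously collapsing blocks, which, since the $C_p$ are strictly decreasing, would each have to be neutral and hence of the same size, an alternative one must rule out by a finer energy balance. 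Controlling the coupling between the fast internal collapse (living on the rescaled time $\tau$) and the slow outer growth of $r$ (on time $t$) is the delicate analytic point; the separation of scales furnished by the polar decomposition is precisely what should make it tractable.
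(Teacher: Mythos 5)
Your polar decomposition is correct, and it is worth noting that the dissipation it isolates, $|\nabla_\Sigma g(Y)|^2$, is exactly the quantity the paper calls $\H^N_{m+1}(Y)=|\nabla \F^N_m(Y)|^2-\bigl((m-1)\F^N_m(Y)\bigr)^2$ (the squared tangential gradient on the unit sphere), so your skeleton is genuinely parallel to the paper's. The structural difference is that the paper never passes to $\omega$-limits of the time-changed flow: it proves a \emph{uniform} lower bound $\H^N_{m+1}\geq\delta>0$ on the set $\{|Y|=1,\ \F^N_m(Y)\geq0\}$ whenever $\chi\notin\{C_p\}$ (Lemma \ref{Lem:strictconcave}), and then closes with an elementary ODE argument on $f_{m+1}=\frac1{m+1}|X|^{m+1}$: one gets $\ddot f_{m+1}\leq-c\,\delta\,f_{m+1}^{-1}$, multiplies by $\dot f_{m+1}\geq0$, integrates to bound $f_{m+1}$, and concludes that $f_{m+1}$ is uniformly concave and so cannot stay nonnegative. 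This bypasses entirely the coupling between the rescaled time $\tau$ and the outer growth of $r$ that you flag as the delicate point, and it needs no {\L}ojasiewicz inequality.

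That said, the two inputs you merely assert are genuine gaps. (a) Your interior-limit step is wrong as written: the maximizer defining $C_N$ is a critical point of the ratio $\W_{1-m}/\U_{m}$, not of $\F^N_m$ restricted to the sphere when $\chi\neq C_N$, so computing its energy $\frac1{m-1}\sum(\cdots)^{1-m}(1-\chi/C_N)$ says nothing about the actual critical points of $g$. What is needed is the nonexistence of \emph{any} critical point of $\F^N_{m,\alpha}$ with $\alpha\geq0$ when $\chi>C_N$ (a critical point of $g$ with $g\geq0$ is such a point, with $\alpha=(m-1)\F^N_m(Y)\geq0$). The paper obtains this from the rigidity result that critical points of $\F^p_{m,\alpha}$, $\alpha\geq0$, are global minimizers (Theorem \ref{thm:cridonnemin}, proved by transporting an arbitrary configuration onto the critical point and applying Jensen-type convexity inequalities), combined with unboundedness from below when $\chi>C_p$; the same theorem supplies the uniqueness used in your ingredient (b), and the fact that a critical point of $\F^p_m$ has zero energy follows from Euler's identity, not from your formula. (b) The expansion $g=\delta^{1-m}\F^p_m(Z)+O(1)$ and the identification of $Z$ as a critical point of $\F^p_m$ is precisely the content of Lemma \ref{Lem:strictconcave} and is the bulk of the proof: one must allow several blocks collapsing at comparable rates (the theorem does \emph{not} exclude multiple relative blow-up sets; it shows each has exactly $p$ particles with $\chi=C_p$), extract the fastest scale by a transitivity argument on the ratios of consecutive gaps, and estimate the gradient components inside and outside the collapsing sets separately. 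You have correctly located the difficulty, but the theorem is essentially this lemma plus the rigidity result of (a), neither of which your proposal establishes.
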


\begin{figure}
\includegraphics[width=0.52\linewidth]{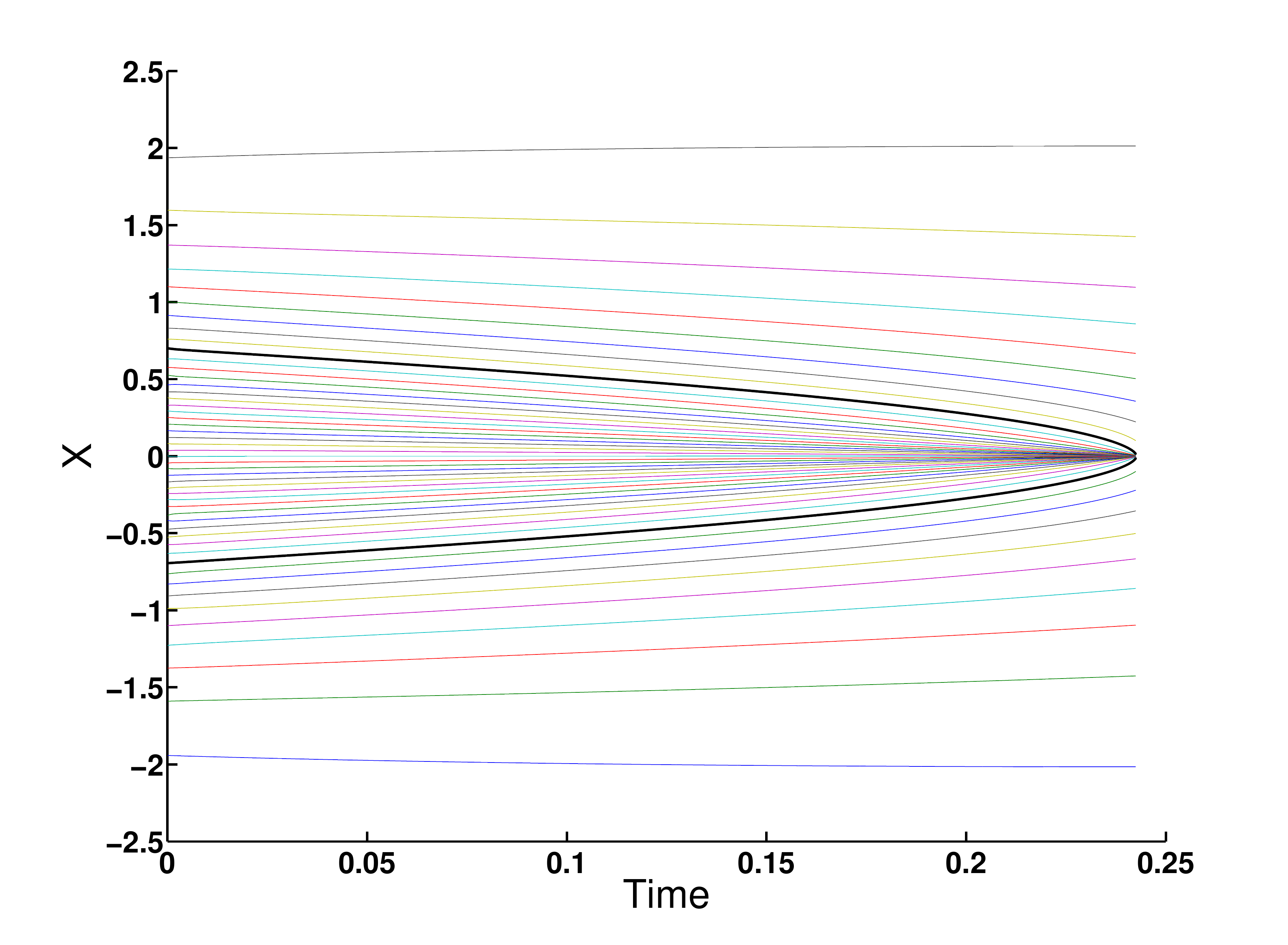}\quad 
\includegraphics[width=0.45\linewidth]{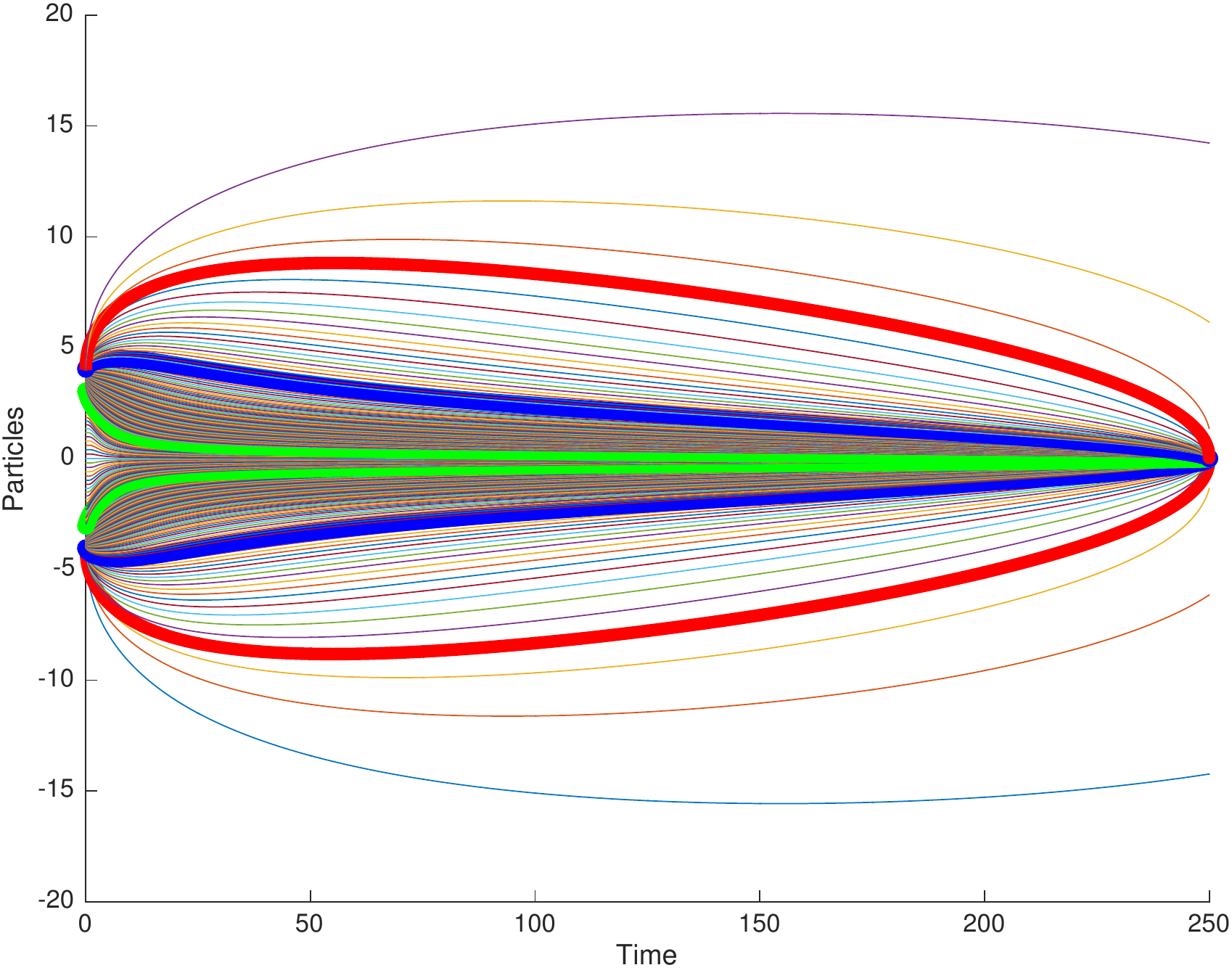} \\
\includegraphics[width=0.52\linewidth]{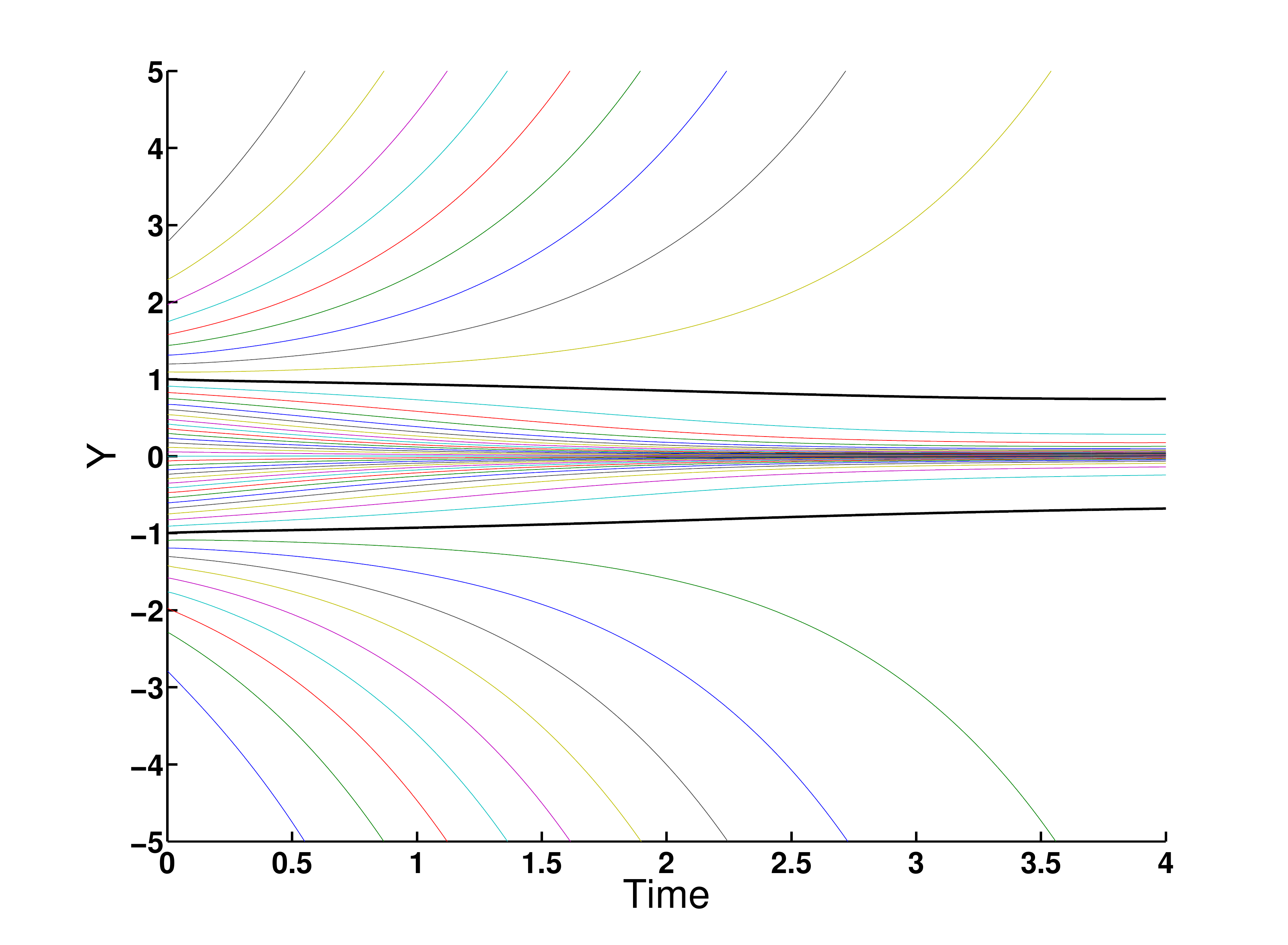}\quad 
\includegraphics[width=0.45\linewidth]{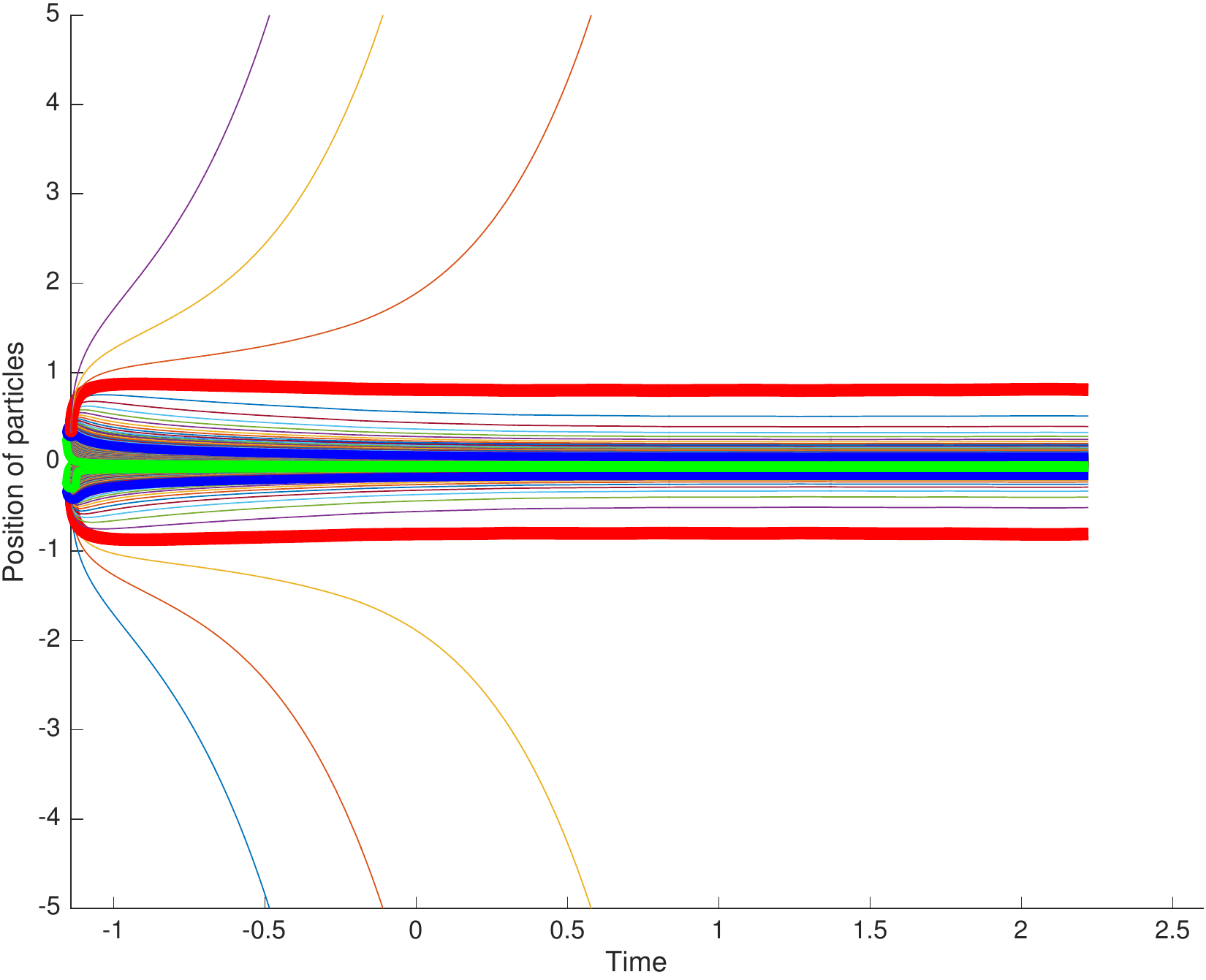}\\
\caption{Comparison of typical blow-up dynamics for a discrete gradient flow with 50 particles. Left panel corresponds to the case of logarithmic homogeneity \eqref{eq:log energy}, and Right panel corresponds to homogeneity $1-m$, with $m>1$ \eqref{EnergieKelSelDiscret} ($m = 1.2$ here). Top figures are in original scales. We see that a subset of particles concentrate at the origin in finite time. Dynamics seem quite different in both cases. Bottom figures are obtained after a proper parabolic rescaling (see \cite{KS1Dp} for details about this procedure. Particles outside the blow-up set are sent to infinity, whereas particles inside the blow-up set are distributed asymptotically along some profile which is presumably the profile of some renormalized functional energy. We do not address this last issue in the present work, but we focus on the unconditional blow-up in the super-critical regime. See Section \ref{sec:Numerics} for more details about the numerical procedure.}
\label{explosionkpamisnreg}
\end{figure}

The definition of a relative blow up set is given in Definition \ref{blowup}. Loosely speaking, it consists in the set of particles participating in the core of the blow-up, up to extraction.
Our last result deals with the critical case $\chi=C_N$.
\begin{Thm}[Critical case]\label{thm:explosionintro3}
Let $m>1$, and $\chi>C_N$. Let $X_0\in \Rord^N$, and let $X$ be the solution of the system \eqref{flotgradientdiscretexplicite}--\eqref{flotgradientdiscretexplicite-boundary} with initial data $X(0) = X_0$. Then, the maximal time of existence of $X$ is $T = +\infty$. Moreover, if $|X(t)|$ is uniformly bounded, then $Y=\frac{X}{|X|} $ converges to the unique critical point of $\F_m$ with unit norm.
\end{Thm}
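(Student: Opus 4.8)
The plan is to combine the two homogeneity identities — the discrete analogues of \eqref{eq:1st derivative X^2}--\eqref{eq:2nd derivative X^2}, which hold verbatim for $\F^N_m$ since it has the same homogeneity as $\F_m$ — with an energy-based exclusion of collisions for global existence, and then a rescaled-gradient-flow argument for the convergence of $Y$. First I would record the a priori bounds. Euler's identity gives $\frac{d}{dt}\bigl(\frac12|X|^2\bigr)=(m-1)\F^N_m(X)$ and $\frac{d^2}{dt^2}\bigl(\frac12|X|^2\bigr)=-(m-1)|\nabla\F^N_m(X)|^2\le 0$. In the critical case $\chi=C_N$ the very definition of $C_N$ forces $\F^N_m\ge 0$ on all of $\Rord^N$, so $|X(t)|^2$ is non-decreasing and $|X(t)|\ge|X_0|>0$; moreover $\frac{d}{dt}\F^N_m(X)=-|\nabla\F^N_m(X)|^2\le 0$, whence $0\le\F^N_m(X(t))\le\F^N_m(X_0)$. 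Since $\frac{d}{dt}\bigl(\frac12|X|^2\bigr)=(m-1)\F^N_m(X)$ is then bounded, $|X|$ grows at most linearly and cannot escape to infinity in finite time.

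\emph{Global existence.} By standard ODE theory the only remaining obstruction is a collision, $\min_i (X_{i+1}-X_i)(t)\to 0$ as $t\to T^-$. I would rule this out by showing that $\F^N_m$ diverges along any collapsing configuration of bounded norm. The key is the strict monotonicity of the family $(C_p)$: a collision organizes the particles, in the limit, into maximal clusters of consecutive indices whose diameter tends to $0$. Rescaling a cluster of $p$ particles to unit diameter, its internal energy behaves like $(\mathrm{diam})^{1-m}\,\F^p_m(\text{profile})$, and since the definition of $C_p$ bounds the interaction sum by $1/C_p$ times the diffusion sum, one has $\F^p_m\ge\frac{1}{m-1}\bigl(1-\chi/C_p\bigr)\,(\text{diffusion})$; because $\chi=C_N<C_p$ for every $p<N$, this is bounded below by a positive multiple of the diffusion part, which diverges as the diameter tends to $0$. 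The cross-interaction terms and the energy of the separated particles stay bounded, so $\F^N_m(X(t))\to+\infty$, contradicting the upper bound. A single cluster comprising all $N$ particles is excluded because it would force $|X|\to0$, contradicting $|X|\ge|X_0|$. Hence no collision occurs and $T=+\infty$. I expect the rigorous cluster extraction, together with the estimate that the internal energy dominates the cross terms uniformly, to be the main technical point here.

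\emph{Convergence of $Y$.} Assume now $|X(t)|$ uniformly bounded. Being non-decreasing and bounded, $r:=|X|\nearrow L\in(0,\infty)$. Since $\int_0^\infty (m-1)\F^N_m(X)\,dt=\frac12(L^2-|X_0|^2)<\infty$ and the integrand is non-increasing and non-negative, $\F^N_m(X(t))\to 0$, so by homogeneity $\F^N_m(Y(t))=r^{m-1}\F^N_m(X(t))\to 0$; in particular $Y$ stays in a compact subset of $\Rord^N\cap\{|Y|=1\}$ away from collisions, since a collision of $Y$ would send $\F^N_m(Y)\to+\infty$ by the estimate above. Writing $X=rY$ and using the degree $(-m)$ homogeneity of $\nabla\F^N_m$, the tangential part of the equation reads $\dot Y=-r^{-m-1}\nabla_T\F^N_m(Y)$, i.e. the gradient flow of $\F^N_m$ restricted to the unit sphere, run at the strictly positive bounded speed $r^{-m-1}$. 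Reparametrizing by $ds=r^{-m-1}\,dt$, with $s\to+\infty$, turns this into an autonomous gradient flow with precompact trajectory, whose $\omega$-limit set is non-empty, connected, invariant, and consists of critical points of $\F^N_m$ on the sphere, all carrying the limiting value $0$. Invoking the uniqueness of the unit-norm critical point of $\F^N_m$, this set reduces to a single point $Y^\ast$, giving $Y(t)\to Y^\ast$.
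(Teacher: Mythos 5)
Your proof is correct (you rightly read the hypothesis as $\chi=C_N$, despite the typo ``$\chi>C_N$'' in the statement), but both halves take routes that differ from the paper's. For global existence, the paper localizes the energy on a putative weak blow-up set $\In_w$ of $p\le N-1$ particles, controls $\frac{d}{dt}\F^p_m$ via Young's inequality and the uniform separation from $\Out_w$ to get $\F^p_m(X(t))\le \F^p_m(X(0))+tA^2$, and then invokes the discrete HLS inequality with $\chi<C_p$ to bound the gaps below (Proposition \ref{blowupaumoinsk}, applied with $k=N-1$); total collapse is excluded by the monotone second moment. You instead exploit that in the critical case the \emph{global} energy is monotone, nonnegative and hence bounded, and show it would diverge to $+\infty$ at any partial collision via the cluster decomposition and $\F^p_m\ge\frac{1}{m-1}(1-\chi/C_p)\,\U_m$. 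This is a legitimate and arguably cleaner static argument here, though note that, exactly like the paper, you need the \emph{strict} inequality $C_N<C_p$ for $p<N$, which the paper asserts (``decreasing family'') but does not prove. For the convergence of $Y$, the paper's (very terse) argument goes through the concavity of $f_{m+1}$, the functional $\H^N_{m+1}$ and the compactness classification of Lemma \ref{Lem:strictconcave}(2), which as written only yields subsequential convergence; your reparametrized spherical gradient flow $\frac{dY}{ds}=-\nabla_T\F^N_m(Y)$ with precompact trajectory, combined with the observation that critical points of the restriction at energy level $0$ are genuine critical points of $\F^N_m$ (since $\alpha=(m-1)\F^N_m(Y)=0$) and the uniqueness from Theorem \ref{thm:cridonnemin}, gives full convergence via the connectedness of the $\omega$-limit set, which is a genuine strengthening of the paper's write-up. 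The one technical point you flag yourself --- the rigorous cluster extraction and the uniform lower bound on inter-cluster distances along a subsequence --- is exactly the kind of bookkeeping the paper carries out in the proof of Lemma \ref{Lem:strictconcave}, so it is a known-to-work step rather than a gap.
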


This is somehow unsatisfactory to distinguish between the case $\chi = C_p$ for some $p$, and $\chi\neq C_p$. However, we believe that there are strong differences in the dynamics that rule out a unified result for all $\chi>C_N$. We already encountered such restrictions in the refined description of blow-up sets in the logarithmic case $m = 1$ \cite{KS1Dp}.

Our approach cannot readily be extended to the infinite-dimensional case of \eqref{eq:energy lagrangian}. Indeed, we crucially use discrete functional inequalities that we are not able to establish in the continuous setting.

We certainly miss some complicated dynamics specific to the continuous setting. On the other hand, our analysis does not rely on any perturbation argument but mostly rely on homogeneity. Finally, we emphasize that we are able to recover a very nice dichotomy, analogous to the two-dimensional Keller-Segel model, or the one-dimensional Keller-Segel equation with a logarithmic interaction kernel \cite{CPS07,BCC08}, for which the present finite-dimensional reduction yields \eqref{eq:log energy}. 

However, the case $m=1$ is much simpler, because the logarithmic homogeneity $\F_{1}(\lambda X) = \F_{1}(X) + (N-1)(-1+\chi N/2)$ immediately implies the following relation,
\begin{equation*}  \dfrac d{dt}\left( \frac12 |X(t)|^2\right) = (N-1)(1-\chi N/2)\, . \end{equation*}
Therefore, blow-up necessarily occurs in finite time when $\chi>2/N$. Alternative arguments show that global existence holds when $\chi<2/N$. It is remarkable that Theorems \ref{thm:subcase} and  \ref{thm:explosionintro} can reproduce such a dichotomy although the dynamics are much more nonlinear (the second moment is not monotonic, for instance). For the sake of comparison, typical blow-up dynamics are plotted in Figure \ref{explosionkpamisnreg}.

To conclude this introduction, we put our results in a more general context. As mentioned above, similar results were obtained for the high-dimensional Keller-Segel system ($d\geq 3$) with nonlinear diffusion by Sugiyama \cite{Sugiyama} and  Blanchet, Carrillo and Lauren\c{c}ot \cite{BCL09},
\begin{equation}\label{eq:BCL}
\begin{cases}
\partial_t \rho  = \Delta \rho^{m}-  \chi \nabla \cdot \left(\rho\nabla S\right) &\, , \quad t>0,\;x\in  \R^d \, ,  \medskip\\
- \Delta S = \rho 
\end{cases} 
\end{equation}
in the special case $m = 2-2/d$. The free energy functional associated to this equation is 
\[
\mathcal G[\rho]  = \frac{d}{d-2} \int_{\R^d} \rho (x)^{2-2/d} \, dx -\frac{\chi }{d-2} \iint_{\R^d\times \R^d} |x-y|^{2-d}\rho(x)\rho(y)\, dx dy\, . 
\]
With this specific choice of exponents, both contributions in the functional have the same homogeneity under mass-preserving dilations, $\rho_\lambda(x) = \lambda^{-d}\rho(\lambda^{-1} x)$.
The Wasserstein metric confers a gradient flow structure to the system \cite{Otto01,AGSbook}. 
The authors proved that initial data with negative energy yields blow-up in finite time \cite[Lemma 4.1]{BCL09}. However, they leave the case of positive energy open. 

More recently, on the same problem \eqref{eq:BCL}, Yao used comparison principles valid under radially symmetric conditions, in order to prove blow-up for a large class of initial data, including the ones having positive energy \cite{Yao13}. Note that she crucially used the fact that the interaction kernel is the Green's function of the $d-$dimensional laplacian, in order to reduce the problem to a local equation on cumulative mass inside balls.


The paper is structured as follows: 
in Section \ref{sec:critiquepourtous} we discuss links between homogeneity, lower bounds, critical points and self similar dynamics for the homogeneous functionals given by $\F^N_m$. This section is very general and only uses homogeneity. 
In Section \ref{sec:critiquedoncmini} we investigate the links between critical points and minimizers for the functionals $\F^N_m$. This section uses homogeneity and optimal transports arguments. 
Section \ref{sec:BU} is devoted to the proof of Theorem \ref{thm:explosionintro} and is specific to the particle system, in particular throughout compactness arguments. In Section \ref{sec:conc} we investigate the (sub)-critical case, prove Theorem \ref{thm:explosionintro3}, Theorem \ref{thm:subcase} and discuss the perspectives of our work. 
Finally we complete our analysis with numerical results in Section \ref{sec:Numerics}. 

\paragraph{Acknowledgement.} This project has received funding from the European Research Council (ERC) under the European Unions Horizon 2020 research and innovation programme (grant agreement No 639638). T. O. Gallou\"et was supported by the ANR contract ISOTACE (ANR-12-MONU-013).

\section{Critical points of homogeneous functionals}\label{sec:critiquepourtous}

Firstly, we define the energy functionals under consideration, including quadratic potential energy. 


\begin{Def}\label{def:fonctionnelle} Let $\chi>0$, $\alpha\in \R$ and $ m > 1 $ 
We define $\F^N_{m,\alpha} $ on $\Rord^N$ by
\begin{align*}
 \F^{N}_{m,\alpha}(X) &= \dfrac1{m-1}\sum_{i=1}^{N-1} \left(X_{i+1}-X_i\right)^{1-m} - \dfrac\chi{m-1} \sum_{1\le i\neq j\le N}  \vert X_i-X_j \vert^{1-m}+\alpha\dfrac{|X|^2}2 \\
 &= \U_{m}(X) - \chi \W_{1-m}(X) +\alpha \V (X)
 \end{align*}
In particular, we denote $\F^N_{m}=\F^N_{m,0}$.
\end{Def}
Functionals $\U_{m}, \W_{1-m}$, and $\V$ are resp. the internal energy, the interaction potential and  the quadratic  potential. Notice that we do not impose any sign condition on $\alpha$.

The homogeneity of the functionals $ \F^N_{m} $ plays a crucial role. We recall some useful formulas is the following proposition.

\begin{Prop}\label{LemHomo}
For all $X\in \Rord^N$ and $\lambda >0$, we have
\begin{enumerate}
\item $\F^N_m\left( \lambda X\right)= \lambda^{1-m} \F^N_m\left(  X\right),$
\item  $\nabla \F^N_m\left( \lambda X\right)= \lambda^{-m} \nabla \F^N_m\left(  X\right),$
\item $X \cdot \nabla \F^N_m\left( X\right)=-(m-1) \F^N_m\left(  X\right).$
\end{enumerate}
\end{Prop}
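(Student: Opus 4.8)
The plan is to establish (1) by a direct scaling computation and then to obtain (2) and (3) as formal consequences via differentiation; in essence the proposition is nothing more than Euler's identity for positively homogeneous functions, so I do not expect a genuine obstacle here.

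First I would prove (1). Under the dilation $X \mapsto \lambda X$ with $\lambda > 0$, each difference scales linearly, $\lambda X_{i+1} - \lambda X_i = \lambda(X_{i+1}-X_i)$ and $|\lambda X_j - \lambda X_i| = \lambda |X_j - X_i|$, so that every term of the form $(\,\cdot\,)^{1-m}$ appearing in \eqref{EnergieKelSelDiscret} picks up a factor $\lambda^{1-m}$. Since both the internal energy $\U_m$ and the interaction term $\W_{1-m}$ are built exclusively from such terms, summing gives $\F^N_m(\lambda X) = \lambda^{1-m}\F^N_m(X)$ at once. I would also note that the dilation preserves both the ordering $X_1 < \dots < X_N$ and the centering $\sum_i X_i = 0$, so that $\lambda X \in \Rord^N$ and the identity is meaningful on the whole domain.

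Next I would deduce (2) by differentiating the identity of (1) with respect to the variable $X$. Applying the chain rule to the left-hand side produces an extra factor $\lambda$, namely $\nabla_X\!\left[\F^N_m(\lambda X)\right] = \lambda\,(\nabla\F^N_m)(\lambda X)$, whereas the right-hand side gives $\lambda^{1-m}\nabla\F^N_m(X)$. Dividing by $\lambda$ yields $(\nabla\F^N_m)(\lambda X) = \lambda^{-m}\nabla\F^N_m(X)$. The only point deserving care is that the gradient is the one associated with the euclidean structure on the hyperplane $\{\sum_i X_i = 0\}$ containing $\Rord^N$; since dilations preserve this hyperplane, the computation is unaffected.

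Finally, (3) follows from Euler's relation: differentiating $\F^N_m(\lambda X) = \lambda^{1-m}\F^N_m(X)$ with respect to $\lambda$ gives $X \cdot (\nabla\F^N_m)(\lambda X) = (1-m)\lambda^{-m}\F^N_m(X)$, and evaluating at $\lambda = 1$ produces $X\cdot\nabla\F^N_m(X) = (1-m)\F^N_m(X) = -(m-1)\F^N_m(X)$. Here the vector $X$ is a legitimate tangent direction at $X$ within the hyperplane, precisely because of the centering constraint $\sum_i X_i = 0$, so the directional derivative $X\cdot\nabla\F^N_m(X)$ is well defined. The whole argument rests only on homogeneity; the sole thing to monitor throughout is that all three statements are read within the constrained space $\Rord^N$ and that the scaling direction $X$ lies in its tangent space, which it does by the centering assumption.
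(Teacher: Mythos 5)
Your proof is correct and is exactly the standard argument (direct scaling for (1), then differentiation in $X$ and in $\lambda$ for (2) and (3), i.e.\ Euler's identity); the paper itself states these formulas without proof, treating them as immediate consequences of homogeneity. Your added remark that the dilation preserves $\Rord^N$ and that $X$ is a valid tangent direction in the centered hyperplane is a reasonable extra precaution, and is consistent with the fact that translation invariance of $\F^N_m$ makes its Euclidean gradient automatically tangent to $\{\sum_i X_i = 0\}$.
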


We define a sequence of treshold values $(C_p)$ for $p\in \N^*$, each being the optimal constant of a discrete Hardy-Littlewood-Sobolev inequality (see \cite{BCL09,Sugiyama} for a continuous version of it). 

\begin{Prop}\label{thm:bornepardessous}
Let $p\in \N^*$. We define $C_p$ as
\[\dfrac1{C_p} =\max_{X \in \Rord^p} \dfrac{\W_{1-m}}{\U_{m}} =  
\max_{X \in \Rord^p} \dfrac{\sum_{1\le i\neq j\le p}  \vert X_j-X_i \vert^{1-m}}{\sum_{i=1}^{p-1} \left(X_{i+1}-X_i\right)^{1-m}} =  \leq p \,. \]
Moreover, if $\alpha \geq 0$, then the functional $\F^{p}_{m,\alpha}$ is bounded  below iff $ \chi \leq C_p$.
 \end{Prop}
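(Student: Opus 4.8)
The plan is to split the proposition into the counting bound $1/C_p\le p$ and the boundedness dichotomy, the latter then following quickly from the definition of $C_p$ together with the homogeneity of Proposition \ref{LemHomo}. For the bound, I would exploit that $t\mapsto t^{1-m}$ is decreasing on $(0,\infty)$ since $m>1$. Writing $d_k=X_{k+1}-X_k>0$ for the nearest-neighbour gaps, every pair $i<j$ satisfies $X_j-X_i\ge d_i$ and $X_j-X_i\ge d_{j-1}$, whence
\[
(X_j-X_i)^{1-m}\le \min\!\left(d_i^{\,1-m},\,d_{j-1}^{\,1-m}\right)\le \tfrac12\left(d_i^{\,1-m}+d_{j-1}^{\,1-m}\right).
\]
Summing over $1\le i<j\le p$ and relabelling, the term $d_k^{1-m}$ is produced with total weight $(p-k)$ from the first family and $k$ from the second, i.e. weight $p$ altogether; this gives $\sum_{i<j}(X_j-X_i)^{1-m}\le \tfrac{p}{2}\sum_k d_k^{1-m}$, that is $\W_{1-m}(X)\le p\,\U_m(X)$ for every $X\in\Rord^p$. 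Taking the supremum yields $1/C_p\le p$, and in particular $C_p>0$.

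For the implication $\chi\le C_p\Rightarrow$ bounded below, I would use that, by definition of $C_p$ as a supremum, $\W_{1-m}(X)\le C_p^{-1}\,\U_m(X)$ holds for all $X\in\Rord^p$. Then for $\alpha\ge0$,
\[
\F^p_{m,\alpha}(X)=\U_m(X)-\chi\,\W_{1-m}(X)+\alpha\,\V(X)\ge\Big(1-\tfrac{\chi}{C_p}\Big)\U_m(X)+\alpha\,\V(X)\ge0,
\]
since $\U_m\ge0$, $\V\ge0$ and $1-\chi/C_p\ge0$; so $\F^p_{m,\alpha}$ is bounded below by $0$.

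For the converse $\chi>C_p\Rightarrow$ unbounded below, I would argue by homogeneity. As $\chi^{-1}<C_p^{-1}=\sup\W_{1-m}/\U_m$, there is $X_0\in\Rord^p$ with $\U_m(X_0)-\chi\,\W_{1-m}(X_0)<0$. Both $\U_m$ and $\W_{1-m}$ are homogeneous of degree $1-m$ and $\V$ of degree $2$, so
\[
\F^p_{m,\alpha}(\lambda X_0)=\lambda^{1-m}\big(\U_m(X_0)-\chi\,\W_{1-m}(X_0)\big)+\alpha\,\lambda^2\,\V(X_0),
\]
and since $1-m<0$, letting $\lambda\to0^+$ makes $\lambda^{1-m}\to+\infty$ against a strictly negative prefactor while $\alpha\lambda^2\V(X_0)\to0$; thus $\F^p_{m,\alpha}(\lambda X_0)\to-\infty$. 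The only genuinely non-formal step is the discrete Hardy--Littlewood--Sobolev inequality $\W_{1-m}\le p\,\U_m$: the pairing and relabelling must be arranged exactly so as to yield the constant $p$ rather than a looser multiple of it. Everything else is bookkeeping with the definition of $C_p$ and the scaling relations of Proposition \ref{LemHomo}.
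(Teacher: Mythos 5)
Your argument is correct and essentially identical to the paper's: the same nearest-gap observation $X_j-X_i\ge X_{i+1}-X_i$ combined with $1-m<0$ and a count giving total weight $p$ (the paper sums over ordered pairs and gets weights $(p-k)$ and $k$ directly, where you average the two endpoint gaps over unordered pairs — same bookkeeping), the same use of the defining inequality of $C_p$ for the lower bound when $\chi\le C_p$, and the same dilation $\lambda\to 0^+$ exploiting the homogeneities $1-m<0$ and $2$ for unboundedness when $\chi>C_p$. The only item you omit is the paper's one-line remark that the maximum defining $C_p$ is actually attained (by continuity of the interaction term on the level set $\{\sum_{i}(X_{i+1}-X_i)^{1-m}=1\}$); this is not needed for the bound $1/C_p\le p$ or for the boundedness dichotomy, but it is used later in the paper for the existence of minimizers at $\chi=C_p$.
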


\begin{proof}
First, we observe that $C_p\geq \frac{1}{p}$. This is a consequence of the trivial inequality $X_j-X_i \geq X_{i+1} - X_i$ for $j>i$, combined with $1-m<0$,
\begin{align*} 
\sum_{1\le i\neq j\le p}  \vert X_i-X_j \vert^{1-m} &\leq \sum^{p-1}_{i =1}  \sum^p_{ j= i+1} \vert X_{i+1}-X_i \vert^{1-m} + \sum^{p}_{i =2}  \sum^{i-1}_{ j= 1} \vert X_{i}-X_{i-1} \vert^{1-m}\\
&\leq \sum^{p-1}_{i =1} (p-i)  \vert X_{i+1}-X_i \vert^{1-m} + \sum^{p}_{i =2} (i-1) \vert X_{i}-X_{i-1} \vert^{1-m} \\ 
 &= p \sum^{p-1}_{i =1} \vert X_{i+1}-X_i \vert^{1-m}.
\end{align*}
Moreover, the maximum is reached for some critical $X\in \Rord^p$ since the interaction functional is continuous on the set $\{ \sum^{p-1}_{i =1} \vert X_{i+1}-X_i \vert^{1-m} = 1 \}$.

Assume $ \chi \leq C_p$ and $X\in \Rord^p$. By definition of $C_p$, we have,
\begin{equation*}
\F^{p}_{m,\alpha}(X)  \geq  ( C_p - \chi ) \sum_{1\le i\neq j\le p}  \vert X_i-X_j \vert^{1-m}+\frac{\alpha}{2} | X|^2 \geq 0\, . 
\end{equation*}
On the contrary, assume $\chi > C_p$. By definition of $C_p$, there exists $X\in \Rord^p$ such that $\F^p_m(X) < 0$. By homogeneity of the functionals, we get $\F^p_{m,\alpha}(\lambda X) = \lambda^{1-m}\F^p_m(X) + \lambda^2 \displaystyle \frac{\alpha}{2} |X|^2 $, which is not bounded below as $\lambda$ goes to $0$.
\end{proof}

Considering the dilations or equivalently some self similar dynamics for the gradient flow system \eqref{flotgradientdiscretexplicite}--\eqref{flotgradientdiscretexplicite-boundary}, we can state a first proposition on the existence of critical points for the functionals $\F^p_{m,\alpha} $. 

The next proposition states some properties of the critical points of $\F^p_{m,\alpha}$, depending on $\chi$, and the sign of $\alpha$. 

\begin{Prop}\label{cro:nocritiquetouscas}
Let $\chi>0$ and $p\in \N^*$. 
\begin{enumerate}[(i)]
\item If $\chi < C_p$ and $\alpha\leq 0$, the functional $\F^p_{m,\alpha}$ has no critical point.
\item If $\chi = C_p$ and $\alpha < 0$, the functional $\F^p_{m,\alpha}$ has no critical point.
\item If $\chi > C_p$, any  critical point $V$ of $\F^p_{m,\alpha}$ satisfies the identity
\begin{equation}\label{valeurmin}
\F^p_{m}\left(V\right)=\frac{2}{m-1} \alpha\V(V)\, .
\end{equation}
In particular, $\F^p_{m}\left(V\right)$ has the same sign as $\alpha$.
\end{enumerate}
\end{Prop}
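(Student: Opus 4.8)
The plan is to extract first the algebraic identity of part (iii), which in fact holds at \emph{any} critical point irrespective of the regime, and then to play it off against the sign information on $\F^p_m$ supplied by Proposition~\ref{thm:bornepardessous} in order to exclude critical points in the situations (i) and (ii). So although the statement lists (iii) last, I would prove it first and use it as the engine for the other two items.

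For the identity, I would start from $\F^p_{m,\alpha}=\F^p_m+\alpha\V$ together with $\nabla\V(X)=X$, so that the critical point equation $\nabla\F^p_{m,\alpha}(V)=0$ becomes $\nabla\F^p_m(V)=-\alpha V$. Taking the euclidean inner product with $V$ and invoking the homogeneity relation $V\cdot\nabla\F^p_m(V)=-(m-1)\F^p_m(V)$ from Proposition~\ref{LemHomo}, I get $-(m-1)\F^p_m(V)=-\alpha|V|^2=-2\alpha\V(V)$, which is exactly \eqref{valeurmin}. Since for $p\geq 2$ every $V\in\Rord^p$ is nonzero (its coordinates are strictly ordered), one has $\V(V)=\tfrac12|V|^2>0$; combined with $m-1>0$ this makes the factor $\tfrac{2}{m-1}\V(V)$ strictly positive, so $\F^p_m(V)$ carries the sign of $\alpha$, settling (iii). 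Note that the condition $\chi>C_p$ is not actually needed to derive the identity; it only guarantees that the regime is the interesting one.

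For (i) and (ii), the idea is to contrast \eqref{valeurmin} with the sign of $\F^p_m$ on all of $\Rord^p$. From the definition of $C_p$ in Proposition~\ref{thm:bornepardessous}, the optimal constant gives $\W_{1-m}(X)\leq \tfrac{1}{C_p}\U_m(X)$ for every $X\in\Rord^p$, whence $\F^p_m(X)=\U_m(X)-\chi\W_{1-m}(X)\geq(1-\chi/C_p)\,\U_m(X)$, where $\U_m(X)>0$. In case (i), $\chi<C_p$ makes $\F^p_m>0$ strictly on $\Rord^p$, while \eqref{valeurmin} with $\alpha\leq 0$ forces $\F^p_m(V)\leq 0$ at any putative critical point, a contradiction; hence none exists. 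In case (ii), $\chi=C_p$ yields only $\F^p_m\geq 0$, but now $\alpha<0$ strictly makes $\F^p_m(V)<0$ through \eqref{valeurmin}, again contradictory.

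The computation is short and poses no serious analytic difficulty; the point that genuinely requires care is the bookkeeping of strict versus non-strict inequalities, which is precisely why the hypotheses differ between (i) (with $\alpha\leq 0$ but $\chi<C_p$ strict) and (ii) (with $\alpha<0$ strict but only $\chi=C_p$): in each case exactly one of the two inequalities feeding the contradiction must be strict, and one should check that the strictness sits on the correct side. A minor loose end is the degenerate case $p=1$, where $\Rord^1=\{0\}$ and the assertion is vacuous, so it may simply be set aside.
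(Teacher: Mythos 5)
Your proposal is correct and follows essentially the same route as the paper: establish the identity \eqref{valeurmin} by pairing the critical point equation $\nabla\F^p_m(V)=-\alpha V$ with $V$ and invoking the Euler relation $V\cdot\nabla\F^p_m(V)=-(m-1)\F^p_m(V)$ from Proposition~\ref{LemHomo}, then derive (i) and (ii) by contrasting the resulting sign of $\F^p_m(V)$ with the sign information coming from the definition of $C_p$. Your explicit bookkeeping of which inequality is strict in each case, and the remark on the degenerate case $p=1$, are sound refinements of the same argument.
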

\begin{proof}
Let $V$ be a critical point of $\F^p_{m,\alpha}$. The third identity in Proposition \ref{LemHomo} implies 
\begin{align*} 
0 & = \nabla\F^p_{m}(V) + \alpha V  \\
0 & = V\cdot\nabla\F^p_{m}(V) + \alpha |V|^2\\ 
0 & = -(m-1) \F^p_{m}(V) + \alpha |V|^2  \, .   \end{align*}
The latter is equivalent to \eqref{valeurmin}. This proves the third point (iii). The two other facts are consequences of \eqref{valeurmin}. 

\noindent(i) We deduce from the very definition of $C_p$ that, if $\chi< C_p$, then  $\F^p_{m}\left(X\right)> 0$ for all $X\in \Rord^p$. Necessarily, we have $\alpha>0$ in this case. 

\noindent(ii) Similarly, if $\chi =  C_p$, then $\F^p_{m}\left(X\right)\geq  0$ for all $X\in \Rord^p$. Necessarily, we have $\alpha\geq 0$ in this case. 
\end{proof}

\begin{Rk}
Another way to prove Proposition \ref{cro:nocritiquetouscas}  is to consider the solution $X(t)$ of the gradient flow  \eqref{flotgradientdiscretexplicite}--\eqref{flotgradientdiscretexplicite-boundary} with initial data $V$, critical point of $\F^p_{m, \alpha}$. The solution is self-similar, and satisfies $X(t)= \lambda(t) V$, where $ \lambda(t)=\left(1+ \frac{\alpha}{m+1} t\right)^{1/(m+1)}.$ 
\end{Rk}
\begin{Rk}
We have similar results in the limit case $m=1$.
\end{Rk}

In the next section we investigate further the existence of critical points for the functionals $\F^p_{m,\alpha}$. 

\section{Critical points are minimizers}\label{sec:critiquedoncmini}

\begin{Thm}\label{thm:cridonnemin}
Let $\chi>0$, and $p\in \N^*$. If $\alpha\geq 0$, then any critical point of $\F^p_{m,\alpha}$ is a minimizer of $\F^p_{m,\alpha}$.\\
In addition, when $\chi = C_p$, there exists a unique minimizer of $\F^p_{m}$ in $\Rord^p$, up to dilation.
\end{Thm}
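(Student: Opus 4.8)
The plan is to exploit the two homogeneities of Proposition \ref{LemHomo} by splitting every configuration into its scale $r=|X|$ and its direction $Y=X/|X|$ lying in the slice $\Sigma=\Rord^p\cap\{|X|=1\}$. Writing $\F^p_{m,\alpha}(rY)=r^{1-m}\F^p_m(Y)+\tfrac{\alpha}{2}r^2$, I first record that $V=rY$ is a critical point of $\F^p_{m,\alpha}$ if and only if $Y$ is a critical point of the restriction $\F^p_m|_\Sigma$ and $r$ is a critical point of the one–variable function $g(r)=r^{1-m}\F^p_m(Y)+\tfrac\alpha2 r^2$. Since $g''(r)=m(m-1)r^{-m-1}\F^p_m(Y)+\alpha$, the function $g$ is strictly convex on $(0,\infty)$ as soon as $\F^p_m(Y)\ge 0$ and $\alpha\ge 0$, so the radial direction is harmless: the critical radius is the unique minimiser of $\F^p_{m,\alpha}$ along the ray through $Y$.

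With this reduction the first assertion becomes an angular statement. In the borderline case $\chi=C_p$ with $\alpha=0$ it is immediate: by Proposition \ref{LemHomo}(3) any critical point $V$ of $\F^p_m$ satisfies $0=V\cdot\nabla\F^p_m(V)=-(m-1)\F^p_m(V)$, hence $\F^p_m(V)=0$, which by Proposition \ref{thm:bornepardessous} is exactly $\min_{\Rord^p}\F^p_m$; thus $V$ is a global minimiser. For $\alpha\ge0$ in the bounded–below regime $\chi\le C_p$ the same computation (or Proposition \ref{cro:nocritiquetouscas}(iii)) gives $\F^p_m(V)=\tfrac{2\alpha}{m-1}\V(V)\ge0$, and combining the radial convexity above with the nondecreasing dependence on $a\ge0$ of the value $\min_{r>0}\{r^{1-m}a+\tfrac\alpha2 r^2\}$ reduces ``critical $\Rightarrow$ minimiser'' to the rigidity claim that every critical point of $\F^p_m|_\Sigma$ with nonnegative value is a global minimiser of $\F^p_m|_\Sigma$.

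For the second assertion I would specialise to $\chi=C_p$, where $\min_{\Rord^p}\F^p_m=0$ and the minimisers are precisely the maximisers of the discrete Hardy–Littlewood–Sobolev ratio $\W_{1-m}/\U_m$; by homogeneity this minimising set is a union of rays, so uniqueness up to dilation is equivalent to uniqueness on the slice $\{\U_m=1\}$. Here the optimal-transport structure enters: between two ordered configurations the $W_2$–displacement interpolation is the segment $X^t=(1-t)X^0+tX^1$, which stays in $\Rord^p$; along it $\U_m$ and $\V$ are displacement convex, and each internal term $((1-t)d^0_i+t d^1_i)^{1-m}$ is \emph{strictly} convex in $t$ unless $d^0_i=d^1_i$. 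Given two minimisers normalised so that $\U_m(X^0)=\U_m(X^1)$ (hence $\W_{1-m}(X^0)=\W_{1-m}(X^1)$), the goal is to show that $t\mapsto\F^p_m(X^t)$ is strictly convex unless $X^1$ is a dilate of $X^0$; since $\F^p_m\ge0$ and vanishes at both endpoints, strict convexity forces $\F^p_m(X^t)<0$ on $(0,1)$, a contradiction, yielding $X^0=X^1$ up to scaling (and the residual reflection symmetry $X_i\mapsto -X_{p+1-i}$, which fixes the symmetric optimiser).

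The main obstacle is precisely this last convexity. The interaction energy $-C_p\W_{1-m}$ has the concave kernel $-\tfrac{C_p}{m-1}|x|^{1-m}$, so it is displacement \emph{concave}, and $\F^p_m=\U_m-C_p\W_{1-m}$ is an a priori indefinite difference of two displacement-convex functionals; the naive interpolation only gives $\U_m(X^t)\le\U_m(X^0)$ and $\W_{1-m}(X^t)\le\W_{1-m}(X^0)$, which is not enough. What is really needed is a second–order (linearised) form of the sharp discrete HLS inequality at the extremal constant $\chi=C_p$: along any geodesic the strict convexity of the internal energy must dominate the concavity of the interaction transversally to dilations, equivalently the Hessian of $\F^p_m|_\Sigma$ is positive definite at a minimiser with kernel exactly the dilation and reflection directions. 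I expect to establish this rigidity either by differentiating the saturated Euler–Lagrange relation $\nabla\U_m=C_p\nabla\W_{1-m}$ to second order, or by a one-dimensional rearrangement argument specific to the kernel $|x|^{1-m}$, and this is the step I anticipate to be the crux; the same rigidity also closes the angular claim left open in the first assertion.
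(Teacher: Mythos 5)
Your reduction to the slice $\Sigma=\Rord^p\cap\{|X|=1\}$ and the radial convexity of $g(r)=r^{1-m}\F^p_m(Y)+\tfrac\alpha2 r^2$ are correct, but they only repackage the statement: everything hinges on the rigidity claim that a critical point of $\F^p_m|_\Sigma$ with nonnegative value is a \emph{global} minimizer, and on uniqueness of the optimizer of the discrete HLS inequality at $\chi=C_p$. You explicitly leave both open, and you correctly diagnose why your proposed route fails: along displacement (here, linear) interpolation the interaction term $-\chi\W_{1-m}$ is concave, so $\F^p_m=\U_m-\chi\W_{1-m}$ has no sign on its second derivative. Moreover, the fallback you anticipate --- differentiating the Euler--Lagrange relation to second order to get positive-definiteness of the Hessian of $\F^p_m|_\Sigma$ transverse to dilations --- would at best give \emph{local} minimality and \emph{local} uniqueness; it cannot by itself yield the global conclusions of the theorem without an additional global argument. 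So the crux of the proof is missing.

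The paper closes exactly this gap with a global Jensen-type argument rather than a second-order one. Writing the Euler--Lagrange equations of $\F^p_{m,\alpha}$ in the summed form
$\left(V_{k+1}-V_k\right)^{-m} = 2 \chi \sum_{i\le k}\sum_{j\ge k+1} \left(V_{j}-V_i\right)^{-m} + \tfrac\alpha{p}\sum_{i\le k}\sum_{j\ge k+1}\left(V_j-V_i\right)$,
one substitutes this expression for the coefficient of each internal-energy term $\left(X_{k+1}-X_k\right)^{1-m}$ and reorganizes $\F^p_{m,\alpha}(X)$ as a sum over pairs $(i,j)$ in which the ratio $\tfrac{X_j-X_i}{V_j-V_i}$ appears as the convex combination $\sum_{k=i}^{j-1}\lambda_k^{i,j}w_k$ with $w_k=\tfrac{X_{k+1}-X_k}{V_{k+1}-V_k}$ and $\lambda_k^{i,j}=\tfrac{V_{k+1}-V_k}{V_j-V_i}$. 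Jensen's inequality applied to $x\mapsto x^{1-m}$ (supplemented, when $\alpha>0$, by the scalar inequality $2x^{1-m}+(m-1)x^2\ge m+1$) then gives $\F^p_{m,\alpha}(X)\ge\F^p_{m,\alpha}(V)$ for \emph{every} $X\in\Rord^p$, with equality iff $X=\lambda V$ ($\alpha=0$) or $X=V$ ($\alpha>0$). This single inequality proves both that critical points are global minimizers and, combined with the existence of a minimizer at $\chi=C_p$ (compactness on the constraint set), the uniqueness up to dilation. If you want to salvage your write-up, this is the mechanism you need to import; the radial--angular splitting can then be dispensed with entirely.
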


\begin{Rk}
The proof of Theorem \ref{thm:cridonnemin} is inspired by optimal transport techniques: we transport any configuration to some critical point, then we use a convexity inequality to conclude. Similar ideas can be found in \cite{BCC08}[Proposition 4.4] and \cite{Franca}.
\end{Rk}

\begin{proof}[Proof of Theorem \ref{thm:cridonnemin}]
First, we notice that, for $\chi = C_p$, minimizers of $\F^p_{m}$ are exactly maximal functions in Proposition \ref{thm:bornepardessous}. This proves existence of minimizers in this case. 

In order to prove that critical points are minimizers, we begin with a characterization of  critical points. 
Recall that, by convention, we set $\left|X_{p+1}-X_{p}\right |^{1-m}=\left|X_{1}-X_{0}\right |^{1-m}=0$. Let $V\in \Rord^p$ be a critical point of $\F^p_{m,\alpha}$. For all $k\in [1,p]$, we have:
\begin{equation}\label{eq:criticalpoint}
-\left(V_{k+1}-V_k\right)^{-m} + \left(V_{k}-V_{k-1}\right )^{-m} + 2 \chi \sum_{j \neq k} \sign(j-k) \left|V_{j}-V_k\right |^{-m} - \alpha V_k=0.
\end{equation}
After summation, and using conservation of the center of mass, $\sum_{i=1}^p V_i=0$, the latter relations are equivalent to the following,
\begin{equation}\label{eq:criticalpoint2}
\left(V_{k+1}-V_k\right)^{-m} = 2 \chi \sum^k_{i =1} \sum^p_{j =k+1} \left(V_{j}-V_i\right)^{-m} + \frac\alpha{p}\sum^k_{i =1} \sum^p_{j =k+1}  \left(V_j-V_i\right)  \,.
\end{equation}
Relation \eqref{eq:criticalpoint} can be deduced   from \eqref{eq:criticalpoint2}, using discrete derivation, 
\begin{align*}
\left(V_{k+1}-V_k\right)^{-m}-\left(V_{k}-V_{k-1}\right)^{-m} &= 2 \chi \sum^k_{i =1} \sum^p_{j =k+1} \left(V_{j}-V_i\right)^{-m}+\frac\alpha{p}\sum^k_{i =1} \sum^p_{j =k+1}  \left(V_j-V_i\right)\\
&\quad -2 \chi \sum^{k-1}_{i =1} \sum^p_{j =k} \left(V_{j}-V_i \right)^{-m}-\frac\alpha{p}\sum^{k-1}_{i =1} \sum^p_{j =k}  \left(V_j-V_i\right)\\
&= 2 \chi \left[  \sum^p_{j =k+1} \left(V_{j}-V_k\right)^{-m}- \sum^{k-1}_{i =1}  \left(V_{k}-V_i\right)^{-m}  \right] \\
& \quad - \frac\alpha{p}\left[- \sum^p_{j =k+1}  \left(V_j-V_k\right) +\sum^{k-1}_{i =1}   \left(V_k-V_i\right)\right]\\
&= 2 \chi  \sum_{j \neq k} \sign(j-k) \left|V_{j}-V_k\right|^{-m} -\frac\alpha{p}\left[ p V_k + \sum^p_{j =1}  V_j \right]\\
&= 2 \chi  \sum_{j \neq k} \sign(j-k) \left|V_{j}-V_k\right|^{-m} -\alpha V_k.\\
\end{align*}

We divide the rest of the proof in two cases, resp. $\alpha = 0$, and $\alpha>0$. 

\noindent{\bf Case 1: $\alpha=0$.} For sake of clarity, we begin with the case $\alpha =0$ which simply reduces to Jensen's inequality.  The strategy is to make appear, for each contribution in $\F^p_{m}$, the relative energy from $X$ to $V$. For the interaction potential, we have:
\begin{align*}
(m-1)\W_{1-m}(X)&=2\chi \sum_{1\le i< j\le p}  ( X_j-X_i )^{1-m} =2\chi \sum_{1\le i< j\le p} \left(\frac{X_{j}-X_i}{V_{j}-V_i}\right)^{1-m} \left(V_{j}-V_i\right)^{1-m}\\
&=2\chi \sum_{1\le i< j\le p} \left(\sum^{j-1}_{ k=i} \frac{X_{k+1}-X_k}{V_{k+1}-V_k} \frac{V_{k+1}-V_k}{V_{j}-V_i}\right)^{1-m} \left(V_{j}-V_i\right)^{1-m}.\\
\end{align*}
Since, for any $(i,j)$, $\sum^{j-1}_{ k=i} \left(\frac{V_{k+1}-V_k}{V_{j}-V_i}\right)=1$, a successive use of  Jensen's inequality implies,
\begin{align} \nonumber
2\chi \sum_{1\le i\neq j\le p}  \vert X_i-X_j \vert^{1-m} 
&\leq 2\chi \sum_{1\le i< j\le p}\sum^{j-1}_{ k=i} \left(\frac{X_{k+1}-X_k}{V_{k+1}-V_k} \right)^{1-m} \left(\frac{V_{k+1}-V_k}{V_{j}-V_i}\right) \left(V_{j}-V_i\right)^{1-m}\\ \nonumber
&= 2\chi \sum^{p-1}_{ k =1} \sum^k_{i=1} \sum^p_{j=k+1} \left(\frac{X_{k+1}-X_k}{V_{k+1}-V_k} \right)^{1-m} \left(\frac{V_{k+1}-V_k}{V_{j}-V_i}\right) \left(V_{j}-V_i\right)^{1-m}\\ \nonumber
&=  \sum^{p-1}_{ k =1} \left({X_{k+1}-X_k} \right)^{1-m}\left[ \left(V_{k+1}-V_k \right)^{m}2\chi \sum^k_{i=1} \sum^p_{j=k+1}  \left(V_{j}-V_i\right)^{-m}\right]\\ \label{calcul}
&=  \sum^{p-1}_{ k =1} \left({X_{k+1}-X_k} \right)^{1-m}.
\end{align}
To get to the last line of the argument, we used the characterization of critical points obtained in  \eqref{eq:criticalpoint2}. 
We deduce that, for all $X\in \Rord^p$, $\F^p_{m}(X) \geq 0 $. Equality occurs in Jensen's inequalities  if and only if there exists $\lambda >0$ such that $X=\lambda V$. In particular, we recover that critical points have zero energy,  $\F^p_{m}(V)=0$, which can be deduced directly from homogeneity.

\begin{Rk}
The case $\alpha = 0$ is only compatible with $\chi = C_p$. Indeed, the functional $\F^p_{m}$ has no critical point for $\chi >C_p$. Otherwise, we would get that $\F^p_{m}(X) \geq 0 $ for all $X\in \Rord^p$, which is in contradiction with the definition of $C_p$.
\end{Rk}

\noindent{\bf Case 2: $\alpha>0$.} 
We begin with a variant of  Jensen's inequality. 
\begin{Lem}\label{newJensen}
Let $a,b>0$, $m>1$, $q\in \N^*$ and $w \in (\R_+^*)^q$. For any convex combination $\left(\lambda_k\right)_{k\in [1,q]}$, we have
\begin{equation}\label{newjensen}
\left(a+2b\right)  \sum^q_{k=1} \lambda_k \left(w_k \right)^{1-m} - a\left( \sum^q_{k=1} \lambda_k w_k \right)^{1-m} + b(m-1)\left( \sum^q_{k=1} \lambda_k w_k \right)^{2} \geq b(m+1).
\end{equation} 
Equality occurs if and only if $w\equiv 1$.
\end{Lem}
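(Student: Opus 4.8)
The plan is to reduce this multi-variable inequality to a one-variable inequality via Jensen, exploiting the fact that the coefficients have been arranged so that the parameter $a$ disappears after the reduction. First I would introduce the two quantities on which everything depends,
\[
S = \sum_{k=1}^q \lambda_k w_k\, , \qquad T = \sum_{k=1}^q \lambda_k (w_k)^{1-m}\, ,
\]
so that the left-hand side of \eqref{newjensen} equals $(a+2b)\,T - a\,S^{1-m} + b(m-1)\,S^2$. The function $x\mapsto x^{1-m}$ is strictly convex on $\R_+^*$ (its second derivative is $m(m-1)x^{-m-1}>0$), so Jensen's inequality gives $T \geq S^{1-m}$. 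Since $a+2b>0$, this yields the lower bound
\[
(a+2b)\,T - a\,S^{1-m} + b(m-1)\,S^2 \;\geq\; (a+2b)\,S^{1-m} - a\,S^{1-m} + b(m-1)\,S^2 \;=\; b\bigl(2\,S^{1-m} + (m-1)\,S^2\bigr)\, .
\]
The crucial structural point is that the $a$-terms cancel, leaving a clean multiple of $b$ times a universal expression in $S$ alone.

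It then remains to prove the scalar inequality $g(S) := 2\,S^{1-m} + (m-1)\,S^2 \geq m+1$ for all $S>0$, with equality iff $S=1$. This is a routine calculus step: one computes
\[
g'(S) = 2(1-m)S^{-m} + 2(m-1)S = 2(m-1)\bigl(S - S^{-m}\bigr)\, ,
\]
which is negative for $S<1$ and positive for $S>1$ (since $m>1$ forces $S^{-m}>1>S$ when $S<1$ and the reverse when $S>1$), so $g$ attains its global minimum at $S=1$, where $g(1) = 2 + (m-1) = m+1$. Multiplying by $b>0$ gives exactly the claimed bound $b(m+1)$.

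For the equality statement, I would trace equality back through the two inequalities used. Equality in the final calculus step forces $S=1$. Equality in Jensen, by strict convexity of $x\mapsto x^{1-m}$, forces all $w_k$ carrying positive weight to coincide with the common value $S=1$, i.e.\ $w\equiv 1$; conversely $w\equiv 1$ gives $T=S=1$ and the left-hand side equals $(a+2b)-a+b(m-1)=b(m+1)$, so equality indeed holds. I do not expect a serious obstacle here — the only genuine ideas are recognizing the convexity of $x^{1-m}$ and noticing the cancellation of $a$; the remaining work is the elementary minimization of $g$ and a standard strict-convexity argument for the equality case. The one point requiring a word of care is the convention on the weights: the equality characterization $w\equiv 1$ is exact when all $\lambda_k>0$, and holds modulo the (irrelevant) coordinates with $\lambda_k=0$ otherwise.
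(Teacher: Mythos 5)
Your proof is correct and follows essentially the same route as the paper's: Jensen's inequality for the convex function $x\mapsto x^{1-m}$ combined with the scalar bound $2x^{1-m}+(m-1)x^2\geq m+1$ minimized at $x=1$ (the paper's inequality \eqref{adhoc}), differing only in which terms you group before applying Jensen. Your explicit verification of the scalar inequality and the remark about coordinates with $\lambda_k=0$ are harmless refinements of the paper's argument.
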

\begin{proof}
\begin{align*}
a &\left( \sum^q_{k=1} \lambda_k w_k \right)^{1-m} + b(1-m)\left( \sum^q_{k=1} \lambda_k w_k \right)^{2}  \\
& =  (a+2b) \left( \sum^q_{k=1} \lambda_k w_k \right)^{1-m}-b\left[2\left(   \sum^q_{k=1} \lambda_k w_k \right)^{1-m} + (m-1)\left( \sum^q_{k=1} \lambda_k w_k \right)^{2}\right] \\
&  \leq (a+2b) \sum^q_{k=1} \lambda_k  \left(  w_k \right)^{1-m} -b (m+1).
\end{align*}
We used Jensen's inequality on the first term. For the second term, we used the following scalar inequality,
\begin{equation}\label{adhoc}
(\forall x>0)\quad 2x^{1-m}+(m-1)x^2\geq m+1.
\end{equation}
Equality case for  Jensen's inequality requires that $w$ is a constant vector. The equality case in \eqref{adhoc} set the constant equal to one.
\end{proof}


We compute $\F^p_{m,\alpha}$, and again we make appear the relative energy from $X$ to $V$. 
Using the characterization of the critical point \eqref{eq:criticalpoint2}. The internal energy rewrites
\begin{align}  \nonumber
\U_{m}(X)&= \frac1{m-1}\sum_{k=1}^{p-1} \left(X_{k+1}-X_k\right)^{1-m}\\  \nonumber
& = \frac1{m-1}\sum_{k=1}^{p-1} \left( X_{k+1}-X_k\right)^{1-m} \left(V_{k+1}-V_k\right)^{m}  \sum^k_{i =1} \sum^p_{j =k+1}\left[ 2 \chi \left(V_{j}-V_i\right)^{-m}+\frac\alpha{p}  \left(V_j-V_i\right) \right] \\   \nonumber
& = \frac1{m-1}\sum_{k=1}^{p-1}  \sum^k_{i =1} \sum^p_{j =k+1} \left( \frac{X_{k+1}-X_k}{V_{k+1}-V_k} \right)^{1-m}  \left( \frac{V_{k+1}-V_k}{V_{j}-V_i} \right) \left[2 \chi \left(V_{j}-V_i\right)^{1-m}+ \frac\alpha{p} \left(V_j-V_i\right)^2\right]\\  \label{interne}  
& =  \sum_{1\le i< j\le p} \left( \sum^{j-1}_{ k=i}  \lambda_k^{i,j} \left( w_k \right)^{1-m} \right)\left[ \frac{2 \chi}{m-1} \left(V_{j}-V_i\right)^{1-m}+\frac\alpha{p(m-1)} \left(V_j-V_i\right)^2\right]\, , 
\end{align}
where we have denoted $w_k=\frac{X_{k+1}-X_k}{V_{k+1}-V_k}$, and $\lambda_k^{i,j}= \frac{V_{k+1}-V_k}{V_{j}-V_i}$.

The interaction can be reorganized as follows, 
\begin{align} \nonumber
\chi \W_{1-m}(X)&=\dfrac{2\chi}{m-1} \sum_{1\le i\neq j\le p}  \vert X_j-X_i \vert^{1-m} =2\chi \sum_{1\le i< j\le p} \left(\frac{X_{j}-X_i}{V_{j}-V_i}\right)^{1-m} \left(V_{j}-V_i\right)^{1-m}\\   \nonumber
&= \frac1{m-1}\sum_{1\le i< j\le p}2\chi \left(V_{j}-V_i\right)^{1-m} \left(\sum^{j-1}_{ k=i} \frac{X_{k+1}-X_k}{V_{k+1}-V_k} \frac{V_{k+1}-V_k}{V_{j}-V_i}\right)^{1-m}\\ \label{interaction}
&= \sum_{1\le i< j\le p}\frac{2\chi}{m-1} \left(V_{j}-V_i\right)^{1-m} \left(\sum^{j-1}_{ k=i} \lambda_k^{i,j} w_k\right)^{1-m} 
\end{align}

For the second moment we start with a doubling variable arguments, using $\sum^{p}_{i=1} X_i=0$.
\begin{align} \nonumber
\alpha\V(X)&=\frac\alpha2 \sum^{p}_{i=1} X_i^2 =\frac\alpha{4p} \sum^{p}_{i=1}\sum^{p}_{j=1} (X_j-X_i)^2  
= \frac\alpha{4p} \sum^{p}_{i=1}\sum^{p}_{j=1} \left( \frac{X_j-X_i}{V_j-V_i}\right)^2\left( {V_j-V_i}\right)^2  \\\nonumber
&= \frac\alpha{2p} \sum_{1\leq i<j\leq p}\left( \frac{X_j-X_i}{V_j-V_i}\right)^2\left( {V_j-V_i}\right)^2  \\ \nonumber
&=  \sum_{1\leq i<j\leq p} \frac\alpha{2p} \left( {V_j-V_i}\right)^2\left( \sum^{j-1}_{ k=i} \frac{X_{k+1}-X_k}{V_{k+1}-V_k} \frac{V_{k+1}-V_k}{V_{j}-V_i}\right)^2 \\  \label{secmoment}
&=  \sum_{1\leq i<j\leq p} \frac\alpha{2p} \left( {V_j-V_i}\right)^2\left( \sum^{j-1}_{ k=i} \lambda_k^{i,j} w_k\right)^2
\end{align}

Together, \eqref{interne}, \eqref{interaction}, \eqref{secmoment} and repeated use of \eqref{newjensen} with the weights $a^{i,j} = \frac{2 \chi}{m-1} \left(V_{j}-V_i\right)^{1-m}$ and $b^{i,j} = \frac\alpha{2p(m-1)} \left(V_j-V_i\right)^2$, imply the following inequality,
\begin{align*}
\F^p_{m,\alpha}(X) \geq \sum_{1\leq i<j\leq p}   \frac{\alpha}{2p} \frac{m+1}{m-1} \left( {V_j-V_i}\right)^2=    \frac{m+1}{m-1} \alpha \V(V) 
=\F^p_{m,\alpha}(V)\, .
\end{align*}
According to Lemma \ref{newJensen}, equality occurs if and only if  $X=V$. The equality case is more restrictive when $\alpha>0$ since homogeneity is  broken.
Notice that the equality $\F^p_{m,\alpha}(V)=\alpha \frac{m+1}{m-1} \V(V)$ can be deduced immediately from  \eqref{valeurmin}.
\end{proof}

The following Corollary gives a summary of Theorem \ref{thm:cridonnemin} that will be useful in the sequel. It also completes \ref{cro:nocritiquetouscas}.
\begin{Cor}\label{cro:nocritique}
Let $\chi>0$, $\alpha > 0$ and $p\in \N^*$. 
\begin{itemize}
\item If $\chi > C_p$, there exists no critical point for both $\F^p_m$ and $\F^p_{m,\alpha}$.
\item If $\chi < C_p$, there exists no critical point for $\F^p_m$.
\item If $\chi = C_p$ there exists no critical point for $\F^p_{m,\alpha}$.
\end{itemize}
\end{Cor}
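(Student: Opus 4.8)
The plan is to treat the three regimes separately, relying on the two engines established above: Theorem~\ref{thm:cridonnemin} (every critical point of $\F^p_{m,\alpha}$ with $\alpha\geq 0$ is a global minimizer) and Proposition~\ref{thm:bornepardessous} (for $\alpha\geq 0$, the functional $\F^p_{m,\alpha}$ is bounded below iff $\chi\leq C_p$), supplemented by the homogeneity identities of Proposition~\ref{LemHomo}. The guiding idea is that a functional with no global minimum can have no critical point once we know that all its critical points are minimizers.

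For the first item ($\chi>C_p$), the observation is that $\F^p_{m,\alpha}$ is \emph{not} bounded below for any $\alpha\geq 0$: choosing $X$ with $\F^p_m(X)<0$ (which exists by definition of $C_p$) and arguing exactly as in the proof of Proposition~\ref{thm:bornepardessous}, one has $\F^p_{m,\alpha}(\lambda X)=\lambda^{1-m}\F^p_m(X)+\lambda^2\alpha\V(X)\to-\infty$ as $\lambda\to 0$. By Theorem~\ref{thm:cridonnemin} any critical point would be a global minimizer, which cannot exist; this disposes of both $\F^p_m$ (the case $\alpha=0$) and $\F^p_{m,\alpha}$ ($\alpha>0$) simultaneously. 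The second item ($\chi<C_p$, $\F^p_m$) is simply the specialization $\alpha=0$ of Proposition~\ref{cro:nocritiquetouscas}(i): when $\chi<C_p$ one has $\F^p_m>0$ on all of $\Rord^p$, so identity~\eqref{valeurmin} would force $\alpha>0$ at any critical point, excluding $\alpha=0$.

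The third item ($\chi=C_p$, $\alpha>0$) is the one requiring a genuine argument, since neither part of Proposition~\ref{cro:nocritiquetouscas} applies and the sign test coming from~\eqref{valeurmin} is inconclusive (it only yields $\F^p_m(V)>0$, which is consistent with $\F^p_m\geq 0$). I would argue by contradiction. Suppose $V$ is a critical point of $\F^p_{m,\alpha}$. By~\eqref{valeurmin}, $\F^p_{m,\alpha}(V)=\F^p_m(V)+\alpha\V(V)=\frac{m+1}{m-1}\alpha\V(V)>0$, since $\alpha>0$ and $\V(V)>0$. On the other hand, Theorem~\ref{thm:cridonnemin} furnishes, at $\chi=C_p$, a minimizer $W$ of $\F^p_m$; being an interior minimizer it is a critical point of $\F^p_m$, hence $\F^p_m(W)=0$ by Proposition~\ref{LemHomo}(iii). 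Dilating, $\F^p_{m,\alpha}(\lambda W)=\lambda^{1-m}\F^p_m(W)+\lambda^2\alpha\V(W)=\lambda^2\alpha\V(W)$, which tends to $0$ as $\lambda\to 0^+$. For $\lambda$ small enough this makes $\F^p_{m,\alpha}(\lambda W)<\F^p_{m,\alpha}(V)$, contradicting the fact that $V$ is a global minimizer by Theorem~\ref{thm:cridonnemin}. Hence no critical point exists.

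The only real obstacle is this third item: the difficulty is to rule out a critical point even though the energy-sign obstruction used in Proposition~\ref{cro:nocritiquetouscas} degenerates exactly at $\chi=C_p$. The key is to exploit the \emph{zero-energy} minimizer of $\F^p_m$ provided by the critical case of Theorem~\ref{thm:cridonnemin}, whose dilations drive the perturbed energy down to $0$ and thereby undercut the strictly positive value taken at any would-be critical point.
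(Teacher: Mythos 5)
Your proposal is correct and follows essentially the same route as the paper: combine Theorem~\ref{thm:cridonnemin} (critical points are global minimizers) with the failure of boundedness from below when $\chi>C_p$, the strict positivity of $\F^p_m$ when $\chi<C_p$, and, when $\chi=C_p$, the dilated zero-energy minimizer $\lambda W$ which drives $\F^p_{m,\alpha}$ to $0$ while the functional stays strictly positive, so no global minimizer (hence no critical point) can exist. The only cosmetic differences are that you phrase the third item as undercutting the strictly positive value $\frac{m+1}{m-1}\alpha\V(V)$ at a would-be critical point rather than saying the infimum is $0$ and not attained, and you handle the second item via Proposition~\ref{cro:nocritiquetouscas}(i) instead of the zero-energy-minimizer observation; both are equivalent to the paper's argument.
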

\begin{Rk}
For the super critical case $\chi > C_p$ the improvement, as  compared to Proposition \ref{cro:nocritiquetouscas}, is to show that there is no critical point for $\F^p_m$ and no critical point $V$ for $\F^p_{m,\alpha}$ such that $\F^p_m(V)\geq 0$.
\end{Rk}
\begin{proof}
Let $\alpha\geq 0$. According to Theorem \ref{thm:cridonnemin}, a critical point of $\F^p_{m,\alpha}$ is a minimizer. Moreoveor, it has zero energy in the case $\alpha=0$. 

If $\chi > C_p$ then $\F^p_{m,\alpha}$ is not bounded below (see the proof of Proposition \ref{thm:bornepardessous}). Therefore, it cannot admit any critical point. 

If $\chi < C_p$, by definition of $C_p$, the functional $\F^p_m$ is positive for all $X\in \Rord^p$. Therefore, it cannot admit any critical point which would be a minimizer with zero energy. 

Finally, if $\chi = C_p$ and $\alpha>0$ then $\F^p_{m,\alpha}$ is positive for all $X\in {\Rord^p}$. Moreover there is a minimizing sequence with vanishing energy, namely $\lambda V$ where $V$ is a minimizer of $\F^p_m$ such that $\F^p_m(V) = 0$, and $\lambda \to 0$. Therefore, there cannot exist any global minimizer. 
\end{proof}

\section{Blow-up phenomena} \label{sec:BU}

We begin with some useful definitions and computations inspired by the case with logarithmic homogeneity \cite{KS1Dp}. 
Let $\In=[l,r] \subset [1,N]$ be some set of consecutive indices. We define
 $\Inl=[l,r-1]$, $\Inb = \left( \{l-1\} \cup \{ r \}\right) \cap [1,N-1]$, $\Out =[1,N] \setminus  \In$ and $\Outl =[1,N]\setminus  \Inl$. Observe that $\partial \Inl\subset \Outl$.  

\begin{Def}[Relative blow-up set]\label{blowup}
We say that $\In$ is a relative blow-up set for a sequence $(X^n)$ if 
\begin{equation}\label{strongblowupr}
\begin{array}{lll}\medskip
&(\forall i \in \Inl) \quad  &\underset{n \to +\infty }{\lim}\, \left(X^n_{i+1}-X^n_i \right)=0,\\
\medskip
&(\forall i \in \Inl)\; (\forall k \in [0,N-1]) \quad  &\underset{n \to +\infty }{\lim}\,   \displaystyle\frac{X^n_{i+1}-X^n_i}{X^n_{k+1}-X^n_{k}} < +\infty,\\
\medskip
&(\forall i \in \Inl)\; (\forall j \in \Inb)  \quad &\underset{n \to +\infty }{\lim}\, \displaystyle \frac{X^n_{i+1}-X^n_i}{X^n_{j+1}-X^n_j} =0.
\end{array}
\end{equation}
\end{Def}
The first condition states that particles in $\In$ are collapsing. The second condition states that nothing collapse faster than particles in $\In$. As a direct consequence, particles in $\In$ collapse at the same rate (take $k\in \Inl$). The third condition states that $\In$ is maximal for the rate of collapse. 

In summary, a relative blow-up set is a connected set having the fastest possible dynamics when blow up occurs. The notation $\Inl$ refers to the gaps between consecutive particles in $\In$, whereas $\Inb$ refers to the gaps between $\In$ and its nearest neighbours on each side. 

\begin{Def}
Let $T \in (0,+\infty]$ and $X \in C^0\left([0,T),\Rord^N\right)$ be a solution of the system \eqref{flotgradientdiscretexplicite}--\eqref{flotgradientdiscretexplicite-boundary}. We say that $X$ blows up if there exists some index $i_0$, and some sequence $t_n\to T$, such that $\lim \left(X_{i_0+1}(t_n)-X_{i_0}(t_n) \right)=0$. Then, it is always possible to build a relative blow-up set for $(X(t_n))$ after further extraction. 
\end{Def}

For a relative blow up set we can define a limiting profile.

\begin{Prop}\label{def:limitprofil}
Let $\In$ be a relative blow up set, associated with the sequence $(X^n)$. Let 
$p=|\In|$ and $\Pi_{\In}(X^n)=\sqrt{\underset{i \in \Inl}{\sum}\left( X^n_{i+1} -X^n_i \right)^2} $.  There exists a unique $Z\in \Rord^p$ such that
\begin{align}
    \begin{cases}
   (\forall i \in \Inl)\quad   Z_{i+1}-Z_i = \underset{n\rightarrow +\infty}{\lim}\displaystyle \frac{X^n_{i+1}-X^n_i}{\Pi_{\In}(X^n)} > 0 \,, & \\
    \sum^p_{i=1} Z_i =0\, . &\\
    \end{cases}
\end{align}  
\end{Prop}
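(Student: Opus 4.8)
The plan is to work directly with the gaps $g^n_i := X^n_{i+1} - X^n_i$ for $i \in \Inl$, to show that their $\Pi_{\In}$-normalizations converge to strictly positive limits, and then to reconstruct $Z$ from these limiting gaps by the unique translation enforcing the zero-mean constraint.

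First I would extract, from the defining properties of a relative blow-up set, the comparability of the gaps inside $\In$. Applying the second condition in \eqref{strongblowupr} with $k\in\Inl$ shows that for every pair $i,k\in\Inl$ the ratio $g^n_i/g^n_k$ has a finite limit; exchanging the roles of $i$ and $k$ shows this limit is also strictly positive. Fixing a reference index $i_0\in\Inl$, this yields $r_i := \lim_n g^n_i/g^n_{i_0} \in (0,+\infty)$ for every $i\in\Inl$. Since $\Pi_{\In}(X^n)$ is exactly the euclidean norm of the gap vector $(g^n_i)_{i\in\Inl}$, I would then write $\Pi_{\In}(X^n)/g^n_{i_0} = \bigl(\sum_{i\in\Inl}(g^n_i/g^n_{i_0})^2\bigr)^{1/2} \to S := \bigl(\sum_{i\in\Inl} r_i^2\bigr)^{1/2} > 0$. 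Dividing, the normalized gaps $g^n_i/\Pi_{\In}(X^n) = (g^n_i/g^n_{i_0})/(\Pi_{\In}(X^n)/g^n_{i_0})$ converge to $\ell_i := r_i/S > 0$, with $\sum_{i\in\Inl}\ell_i^2 = 1$ by construction. This shows the limits appearing in the statement exist and are strictly positive.

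Finally I would reconstruct $Z$. Prescribing the consecutive gaps $Z_{i+1}-Z_i = \ell_i > 0$ determines a strictly increasing vector up to a global additive constant: setting the first component to $0$ and taking partial sums of the $\ell_i$ gives one such vector, and subtracting its mean produces a representative with $\sum_i Z_i = 0$; strict monotonicity guarantees $Z\in\Rord^p$. This proves existence. For uniqueness, any two admissible vectors share the same consecutive gaps, hence differ by a constant vector, and the zero-mean constraint forces that constant to vanish, so $Z$ is unique.

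The only genuine content lies in the second paragraph, namely the convergence of the normalized gaps to strictly positive limits, which is precisely where the definition of the relative blow-up set is used (the comparability of gaps inside $\In$ rules out degeneration of the profile). The reconstruction and uniqueness of $Z$ are then elementary.
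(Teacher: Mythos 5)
Your proposal is correct and follows essentially the same route as the paper: both arguments extract the finite, strictly positive limits of the gap ratios $\gamma_{i,k}$ from the second condition of Definition \ref{blowup} (with symmetry in $i,k$ giving positivity), deduce convergence of the $\Pi_{\In}$-normalized gaps to positive limits, and then fix $Z$ as the unique zero-mean vector with those gaps. The only cosmetic difference is that you normalize by a fixed reference gap $g^n_{i_0}$ before passing to $\Pi_{\In}$, whereas the paper writes the limit directly as $\bigl(1+\sum_{k\neq i}\gamma_{k,i}^2\bigr)^{-1/2}$; these are the same computation.
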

\begin{proof}
For all $(i,k) \in  \Inl  \times \Inl$, we define 
\begin{align*}
\gamma^n_{i,k} =   \displaystyle\frac{X^n_{i+1}-X^n_i}{X^n_{k+1}-X^n_{k}}\,, \quad \mbox{and}\quad  \underset{n\rightarrow +\infty}{\lim}  \gamma^n_{i,k}=\gamma_{i,k}\, .
\end{align*}
By definition of a relative blow up set $0 < \gamma_{i,k}  < +\infty $. We deduce that
\begin{align*}
0<  \underset{n\rightarrow +\infty}{\lim} \displaystyle \frac{X^n_{i+1}-X^n_i}{\Pi_{\In}(X^n)}= \displaystyle \frac{1}{\sqrt{1+\sum_{ k \in \Inl\setminus\{i\}} \left(\gamma_{k,i}\right)^2}} < +\infty\, .
\end{align*}
Therefore, $Z$ is well defined up to an additive constant. We fix it, by imposing the center of mass to be zero. 
\end{proof}

The next proposition concerns the occurrence of blow-up in the super-critical regime, when the energy is negative initially. This is the easiest part, as explained in the Introduction.

\begin{Prop}[Blow-up with initial negative energy] \label{easycase}
Let $\chi>C_N$ and $X_0 \in \Rord^N$ such that $\F^N_m(X_0)<0$. Then, the solution $X(t)$ of the gradient flow system \eqref{flotgradientdiscretexplicite}--\eqref{flotgradientdiscretexplicite-boundary} blows up in finite time. 
\end{Prop}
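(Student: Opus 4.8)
The plan is to exploit the two structural identities that the discrete functional shares with its continuous counterpart. The first is the derivative identity
\[ \dfrac{d}{dt}\left(\tfrac12 |X(t)|^2\right) = (m-1)\,\F^N_m(X(t)), \]
which follows from the gradient flow equation together with the third identity of Proposition \ref{LemHomo}, namely $X\cdot\nabla\F^N_m(X) = -(m-1)\F^N_m(X)$. The second ingredient is the dissipation of energy along the flow. The whole point is that, once the energy is negative, it stays negative and in fact bounded away from zero; this forces $|X(t)|^2$ to decrease at a uniform linear rate and hence to reach zero in finite time, an impossibility that can only be resolved by the breakdown of the solution.

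First I would record that the energy is non-increasing: differentiating along the gradient flow gives $\frac{d}{dt}\F^N_m(X(t)) = \nabla\F^N_m(X(t))\cdot \dot X(t) = -|\nabla\F^N_m(X(t))|^2 \le 0$. Consequently $\F^N_m(X(t)) \le \F^N_m(X_0) < 0$ for every $t$ in the interval of existence $[0,T)$. Inserting this into the derivative identity yields
\[ \dfrac{d}{dt}\left(\tfrac12 |X(t)|^2\right) = (m-1)\,\F^N_m(X(t)) \le (m-1)\,\F^N_m(X_0) =: -c < 0, \]
where $c>0$ since $m>1$. Integrating this differential inequality gives $\tfrac12 |X(t)|^2 \le \tfrac12 |X_0|^2 - c\,t$ on $[0,T)$. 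Because the right-hand side becomes negative for $t > |X_0|^2/(2c)$ while $|X(t)|^2 \ge 0$, the solution cannot persist up to that time; hence the maximal existence time satisfies $T \le |X_0|^2/(2c) < +\infty$.

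It remains to upgrade \emph{finite maximal time} to \emph{blow-up} in the precise sense of collapse of a gap. Here I would use that the trajectory stays bounded: the computation above shows $\frac{d}{dt}|X|^2 < 0$, so $|X(t)| \le |X_0|$ on $[0,T)$. The vector field defining \eqref{flotgradientdiscretexplicite}--\eqref{flotgradientdiscretexplicite-boundary} is smooth on the open set $\Rord^N$ and becomes singular only where some consecutive gap $X_{i+1}-X_i$ vanishes. By the standard escape principle for ordinary differential equations, a solution with finite maximal time must eventually leave every compact subset of $\Rord^N$; since the trajectory is confined to the bounded ball $\{|X|\le |X_0|\}$, the only way to exit every compact subset is that $\min_i (X_{i+1}(t)-X_i(t)) \to 0$ along some sequence $t_n \to T$. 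Extracting a fixed index $i_0$ that realizes the minimum infinitely often yields $X_{i_0+1}(t_n)-X_{i_0}(t_n)\to 0$, which is exactly the definition of blow-up.

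The main obstacle is this last step: one must ensure that boundedness of the trajectory genuinely precludes any breakdown other than a collision, i.e. that the right-hand side of the system is locally Lipschitz on all of $\Rord^N$ and that its singularities occur solely at the boundary where gaps degenerate. Granting this, the negative-energy hypothesis does all the work through the two homogeneity-driven identities, and the assumption $\chi>C_N$ enters only to guarantee, via Proposition \ref{thm:bornepardessous}, that configurations with $\F^N_m(X_0)<0$ exist at all.
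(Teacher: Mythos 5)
Your argument is correct and coincides with the paper's proof: both use the dissipation $\frac{d}{dt}\F^N_m(X(t))=-|\nabla\F^N_m(X(t))|^2\le 0$ to get $\F^N_m(X(t))\le\F^N_m(X_0)<0$, and then the homogeneity identity $X\cdot\nabla\F^N_m(X)=-(m-1)\F^N_m(X)$ to conclude that $\tfrac12|X(t)|^2$ decreases at a uniform linear rate, forcing $T<+\infty$. Your final continuation/escape-principle step, identifying finite maximal existence time with the collapse of some gap, is a sound addition that the paper leaves implicit (it simply equates finite maximal time with blow-up).
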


\begin{proof}
Notice that we have \[\frac{d}{dt} \left( \F^N_m\left(X(t)\right) \right) = -\left| \nabla \F^N_m\left(X(t)\right) \right|^2\, . \] 
Therefore $(\forall t\geq 0)\; \F^N_m(X(t)) \leq \F^N_m(X_0) $.
We compute the second moment $f_2(t)=\frac12 |X(t)|^2$. It is a consequence of homogeneity (see Proposition \ref{LemHomo}) that 
\begin{align*}
\frac{d f_2}{dt}(t)= X(t) \cdot \nabla X(t)  = -X(t) \cdot \nabla \F^N_m(X(t)) = (m-1)\F^N_m(X(t)) \leq (m-1)\F^N_m(X_0)\, .
\end{align*}
Therefore, since it is assumed that $\F^N_m(X_0)<0$, the second moment decreases at least linearly, thus the maximal time of existence is necessarily finite. 
\end{proof}


To handle the case where the initial condition has positive energy, the most natural quantity to compute is some power function of the second moment. The next proposition contains the material of this computation.

\begin{Prop}
Let $\chi>0$ and $X_0 \in \Rord^N$. Let $X(t)$ be the solution of the gradient flow system \eqref{flotgradientdiscretexplicite}--\eqref{flotgradientdiscretexplicite-boundary}. We define 
$$
f_{m+1}(t)= \dfrac1{m+1} | X(t)|^{m+1}=    \dfrac1{m+1} \left(\sum^N_{i=1} X_i(t) ^2\right)^{(m+1)/2} .
$$
The function $f_{m+1}$ is concave. Moreover, we have the following identities:
\begin{align}
\label{dynmomentun}
\frac{d f_{m+1}}{dt}(t)  &=   (m-1) \F^N_m(X(t)) | X(t)|^{m-1}, \\
\label{dynmomentdeux}
\frac{d^2 f_{m+1}}{dt^2}(t)  &=-\frac{ m-1}{m+1}\left(f_{m+1}\left(t\right)\right)^{-1}  \H^N_{m+1}\left( \frac{X(t)}{|X(t)|}\right),
\end{align}
where for any $Y \in \Rord^N$ with $|Y|=1$, $\H^N_{m+1}$ is defined as 
\begin{equation}
 \H^N_{m+1}\left(Y \right) = \left[ \left| \nabla \F^N_m\left(   Y  \right) \right|^2  - \left((m-1)\F^N_m\left( Y  \right)   \right)^2 \right] \geq 0.
 \end{equation}
 \end{Prop}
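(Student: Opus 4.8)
The plan is to establish the two differential identities \eqref{dynmomentun}--\eqref{dynmomentdeux} by direct computation using homogeneity (Proposition \ref{LemHomo}), and then read off concavity from the sign of the second derivative. Throughout I write $r(t)=|X(t)|$ and use the gradient flow relation $\dot X = -\nabla\F^N_m(X)$ together with the Euler identity $X\cdot\nabla\F^N_m(X) = -(m-1)\F^N_m(X)$.

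\textbf{First derivative.} I would write $f_{m+1} = \frac{1}{m+1}r^{m+1}$ and differentiate via the chain rule:
\[
\frac{df_{m+1}}{dt} = r^{m-1}\,\big(X\cdot\dot X\big) = -r^{m-1}\,X\cdot\nabla\F^N_m(X) = (m-1)\,r^{m-1}\,\F^N_m(X),
\]
using part (3) of Proposition \ref{LemHomo} in the last step. This is exactly \eqref{dynmomentun}. Note the factor $r^{m-1}$ is precisely what tames the singularity, in contrast to the plain second moment computation \eqref{eq:1st derivative X^2} which carried no weight; this is the whole point of choosing the exponent $m+1$.

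\textbf{Second derivative.} Here I differentiate \eqref{dynmomentun} again. Writing $g(t)=\F^N_m(X(t))$ and recalling the energy dissipation $\dot g = -|\nabla\F^N_m(X)|^2$, I get a product of three time-dependent factors $(m-1)\,r^{m-1}\,g$. Differentiating,
\[
\frac{d^2 f_{m+1}}{dt^2} = (m-1)\Big[(m-1)r^{m-3}(X\cdot\dot X)\,g + r^{m-1}\dot g\Big].
\]
Substituting $X\cdot\dot X = -X\cdot\nabla\F^N_m(X)=(m-1)g$ and $\dot g=-|\nabla\F^N_m(X)|^2$ yields
\[
\frac{d^2 f_{m+1}}{dt^2} = (m-1)\,r^{m-3}\Big[(m-1)^2 g^2 - r^2|\nabla\F^N_m(X)|^2\Big].
\]
The remaining task is to rewrite this in the normalized form \eqref{dynmomentdeux}. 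By homogeneity of the gradient (part (2) of Proposition \ref{LemHomo}), $|\nabla\F^N_m(X)|^2 = r^{-2m}|\nabla\F^N_m(Y)|^2$ where $Y=X/r$, and likewise $g = r^{1-m}\F^N_m(Y)$ by part (1). Substituting and collecting the powers of $r$, every term should carry the common factor $r^{-1-m}=\big((m+1)f_{m+1}\big)^{-1}$, leaving exactly $-\frac{m-1}{m+1}(f_{m+1})^{-1}\H^N_{m+1}(Y)$ with $\H^N_{m+1}(Y)=|\nabla\F^N_m(Y)|^2-\big((m-1)\F^N_m(Y)\big)^2$. This matches \eqref{dynmomentdeux}.

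\textbf{Concavity and nonnegativity of $\H^N_{m+1}$.} Concavity of $f_{m+1}$ follows once I show $\H^N_{m+1}(Y)\geq 0$, i.e. $|\nabla\F^N_m(Y)|^2 \geq (m-1)^2\F^N_m(Y)^2$. This is the one genuinely nontrivial point, and I expect it to be the main obstacle. The clean way is Cauchy--Schwarz applied to the Euler identity: for $|Y|=1$,
\[
\big((m-1)\F^N_m(Y)\big)^2 = \big(Y\cdot\nabla\F^N_m(Y)\big)^2 \leq |Y|^2\,|\nabla\F^N_m(Y)|^2 = |\nabla\F^N_m(Y)|^2,
\]
which is exactly the claimed inequality. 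This gives $\H^N_{m+1}\geq 0$ for all unit-norm $Y$, hence the second derivative is $\leq 0$ and $f_{m+1}$ is concave. The only subtlety to flag is that the identities hold on the maximal interval of existence where $X(t)\in\Rord^N$, so that $r(t)>0$ and the negative powers of $r$ are well defined; no boundary issues arise as long as the particles remain ordered and distinct.
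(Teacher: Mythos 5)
Your proposal is correct and follows essentially the same route as the paper: differentiate $f_{m+1}$ using the Euler identity from Proposition \ref{LemHomo}, differentiate again using the energy dissipation $\frac{d}{dt}\F^N_m(X)=-|\nabla\F^N_m(X)|^2$, rescale by homogeneity to pull out the factor $\bigl((m+1)f_{m+1}\bigr)^{-1}$, and conclude $\H^N_{m+1}\geq 0$ by Cauchy--Schwarz applied to $Y\cdot\nabla\F^N_m(Y)$. The algebra checks out and your remark about working on the maximal existence interval where $r(t)>0$ is a harmless (and sensible) addition.
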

 \begin{Rk}
In the preceding computation, $m+1$ is the maximal exponent $r$ for which the function  $f_{r}$ is concave. This quantity  already appeared  in \cite{DolTos13,DolTos15},  in the analysis of long-time behaviour for porous medium equations .
 \end{Rk}
\begin{proof}
We start by proving formula \eqref{dynmomentun} and \eqref{dynmomentdeux}, by using the homogeneity of the functional $\F^N_m$. 
\begin{align}
\nonumber \frac{d f_{m+1}}{dt}(t) &=  \left( X(t) \cdot \nabla X(t) \right) | X(t)|^{m-1} =   (m-1) \F^N_m(X(t)) | X(t)|^{m-1}. \\
\nonumber \frac{d^2 f_{m+1}}{dt^2}(t) &=  (m-1)\left[ -\left| \nabla \F^N_m\left(X(t)\right) \right|^2 | X(t)|^{m-1} + (m-1)^2\left(\F^N_m\left(X(t)\right)\right)^2 | X(t)|^{m-3}    \right] \\
\nonumber         &=  \frac{m-1}{|X(t)|^{m+1}} \left[ -\left| \nabla \F^N_m\left(X(t)\right) | X(t)|^{m} \right|^2  + (m-1)^2\left(\F^N_m\left(X(t)\right) | X(t)|^{m-1}  \right)^2 \right]\\ \label{deriveseconde1}
         &=  \frac{ m-1}{m+1}\left(f_{m+1}\left(t\right)\right)^{-1}  \left[ -\left| \nabla \F^N_m\left(   \frac{X(t)}{|X(t)|}  \right) \right|^2  + \left((m-1)\F^N_m\left( \frac{X(t)}{|X(t)|}\right)   \right)^2 \right]\\  \label{deriveseconde2}
         &= - \frac{ m-1}{m+1}\left(f_{m+1}\left(t\right)\right)^{-1}   \H^N_{m+1}\left( \frac{X(t)}{|X(t)|}\right)\, . 
\end{align} 
On the other hand, 
observe that
\begin{equation} \left((m-1)\F^N_m\left( Y  \right) \right)^2= \left( Y \cdot \nabla \F^N_m\left( Y\right) \right)^2 \leq \left( |Y| \left| \nabla \F^N_m\left(   Y  \right) \right| \right)^2\, , \label{eq:cauchy-schwarz H}
\end{equation}
by  Cauchy-Schwarz inequality. Therefore, $\H^N_{m+1}\geq 0$. 
\end{proof}

In the next lemma we investigate the functional $\H^N_{m+1}$ restricted to  the cone of nonnegative energy. The main result is that, apart from a finite of value for $\chi$, $\H^N_{m+1}$ is bounded below by a positive constant. 

\begin{Lem}\label{Lem:strictconcave}
Let $\chi \geq C_N$. We define
$$
\delta^N_{\H}=\inf \{ \H^N_{m+1}(Y)\left| Y \in \Rord^N,\quad  |Y|=1, \quad  \F^N_m(Y)\geq 0 \} \right.
$$
\begin{enumerate}
\item If for all integer $p\in [1,N]$, $\chi \neq C_p$ then $\delta^N_{\H}>0$. 
\item If $\chi=C_N$ then $\delta^N_{\H}=0$ and the unique minimizer is the critical point of $\F^N_m$ with unit norm. 
\item If there exists some $p \in [1,N-1]$ with $\chi=C_p$ then $\delta^N_{\H}=0$. There is no minimizer due to lack of compactness. There exists a relative blow up set up to extracting a subsequence. Moreover, all such relative blow up sets are made of 
$p$ consecutive particles converging towards a critical point of $\F^p_m$.
\end{enumerate}
\end{Lem}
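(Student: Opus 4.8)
The plan is to recast $\H^N_{m+1}$ so that its zero set becomes transparent. Using the Euler identity $Y\cdot\nabla\F^N_m(Y)=-(m-1)\F^N_m(Y)$ from Proposition \ref{LemHomo} together with $|Y|=1$, one has
\[
\H^N_{m+1}(Y)=\left|\nabla\F^N_m(Y)\right|^2-\left(Y\cdot\nabla\F^N_m(Y)\right)^2=\left|\nabla\F^N_m(Y)-\left(Y\cdot\nabla\F^N_m(Y)\right)Y\right|^2,
\]
so that $\H^N_{m+1}(Y)$ is precisely the squared norm of the spherical (tangential) component of $\nabla\F^N_m(Y)$; in particular $\H^N_{m+1}(Y)=0$ if and only if $\nabla\F^N_m(Y)$ is colinear to $Y$, i.e.\ $Y$ is a critical point of $\F^N_{m,\alpha}$ with $\alpha=(m-1)\F^N_m(Y)$. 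On the admissible set $\{\F^N_m\geq 0\}$ this forces $\alpha\geq 0$, so by Corollary \ref{cro:nocritique} an interior zero of $\H^N_{m+1}$ can only occur when $\chi=C_N$ and $\alpha=0$, where it is a critical point of $\F^N_m$.

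The second ingredient is a concentration--compactness analysis of a minimizing sequence $Y^n$, with $|Y^n|=1$, $\F^N_m(Y^n)\geq 0$ and $\H^N_{m+1}(Y^n)\to\delta^N_\H$. If $(Y^n)$ stays in a compact subset of $\Rord^N$ it converges to an interior minimizer, handled by the previous paragraph. Otherwise some gaps collapse and, after extraction, Definition \ref{blowup} and Proposition \ref{def:limitprofil} provide a relative blow-up set $\In$ of size $q$ with profile $Z\in\Rord^q$. Writing $\eps_n=\Pi_\In(Y^n)\to 0$, the crucial balance is
\[
\eps_n^{m}\,\nabla_{\In}\F^N_m(Y^n)\To\nabla\F^q_m(Z),\qquad \F^N_m(Y^n)=\eps_n^{1-m}\bigl(\F^q_m(Z)+o(1)\bigr)+O(1),
\]
obtained by isolating the collapsing gaps and the interactions internal to $\In$ (whose $(1-m)$ and $-m$ powers blow up like $\eps_n^{1-m}$ and $\eps_n^{-m}$), all remaining contributions being $O(1)$. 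Since $\eps_n^{-2m}$ dominates $\eps_n^{2-2m}$, one gets $|\nabla\F^N_m(Y^n)|^2\geq\eps_n^{-2m}|\nabla\F^q_m(Z)|^2(1+o(1))$, which overwhelms $(m-1)^2\F^N_m(Y^n)^2=O(\eps_n^{2-2m})$; hence $\H^N_{m+1}(Y^n)\to+\infty$ unless $\nabla\F^q_m(Z)=0$, i.e.\ unless $Z$ is a critical point of $\F^q_m$. By Corollary \ref{cro:nocritique} this requires $\chi=C_q$, and the strict monotonicity of $q\mapsto C_q$ then pins down the size $q$.

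Assembling the cases is then mostly bookkeeping. In Case (1), $\chi\neq C_p$ for every $p$ (so $\chi>C_N$): an interior minimizer would be a critical point of some $\F^N_{m,\alpha}$ with $\alpha\geq 0$, impossible by Corollary \ref{cro:nocritique}, while any collapse yields $\H^N_{m+1}\to+\infty$ since no $q$ satisfies $\chi=C_q$; a minimizing sequence with $\delta^N_\H=0$ can do neither, so $\delta^N_\H>0$. In Case (2), $\chi=C_N$: the unit-norm critical point $V$ of $\F^N_m$ exists and satisfies $\nabla\F^N_m(V)=0$, $\F^N_m(V)=0$, hence $\H^N_{m+1}(V)=0$ and $\delta^N_\H=0$ is attained; uniqueness follows because any interior zero is a critical point of $\F^N_m$, thus equal to $V$ by Theorem \ref{thm:cridonnemin}, while collapse is excluded since $q\leq N-1$ gives $\chi=C_q\neq C_N$ (a total collapse being incompatible with $|Y|=1$). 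In Case (3), $\chi=C_p$ with $p\leq N-1$: interior zeros are excluded (no critical point of $\F^N_{m,\alpha}$ for $\chi>C_N$), so there is no minimizer, and the blow-up analysis forces every minimizing sequence to collapse along a relative blow-up set of size exactly $p$ whose profile is the unique critical point of $\F^p_m$.

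It remains to prove that $\delta^N_\H=0$ in Case (3), and this is the step I expect to be the main obstacle. The natural idea is to build a test sequence by collapsing $p$ consecutive particles onto a rescaled copy $\eps_n Z^*$ of the critical profile $Z^*$ of $\F^p_m$; this annihilates the dominant $\eps_n^{-2m}$ term in the gradient and, since $\F^p_m(Z^*)=0$, also the dominant term in $\F^N_m$. The trouble is the residual $O(1)$ tangential gradient carried by the $N-p$ remaining particles together with the collapsed cluster viewed as a single heavy point: to drive $\H^N_{m+1}$ to $0$ one must place these particles at a critical configuration of the corresponding reduced (weighted) functional, whose existence must be secured by the same homogeneity and self-similarity arguments used above (Proposition \ref{cro:nocritiquetouscas} and the attached self-similar solutions). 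Making the leading-order asymptotics of the previous paragraph fully uniform --- in particular the convergence of the rescaled gradient and the control of the cross terms between $\In$ and its complement --- is the other point requiring genuine care.
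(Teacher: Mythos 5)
Your proposal follows essentially the same route as the paper's proof: your identity $\H^N_{m+1}(Y)=\left|\nabla\F^N_m(Y)-(Y\cdot\nabla\F^N_m(Y))\,Y\right|^2$ is exactly the equality case of the Cauchy--Schwarz inequality the paper invokes; the compact case is settled, as there, by identifying the limit as a critical point of $\F^N_{m,\alpha}$ with $\alpha=(m-1)\F^N_m(Y^\infty)\geq 0$ and appealing to Corollary \ref{cro:nocritique}; and the non-compact case rests on the same observation that the $\eps_n^{-2m}$ blow-up of the gradient dominates the $\eps_n^{2-2m}$ blow-up of the energy, forcing the rescaled profile to be a critical point of the sub-functional and hence $\chi=C_q$. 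Two remarks. First, you treat a single collapsing cluster, whereas a degenerating minimizing sequence may produce several simultaneous fastest-collapsing components $\In_u$; the paper runs the rescaling on all of them at once, which is what yields the stronger conclusion that \emph{every} relative blow-up set has exactly $p$ particles. Second, the step you single out as the main obstacle --- actually exhibiting a sequence with $\H^N_{m+1}\to 0$ when $\chi=C_p$ with $p\leq N-1$, so that $\delta^N_{\H}=0$ --- is not carried out in the paper either: its proof only \emph{assumes} $\delta^N_{\H}=0$ and derives the structure of minimizing sequences, which is all that is actually used in the proof of Theorem \ref{thm:explosionintro}(2). Your diagnosis of why the naive test configuration fails (the residual $O(1)$ tangential gradient carried by the outer particles and the cluster viewed as a heavy point) is accurate, so this portion of item (3) should be regarded as a genuine gap in the statement as written, shared with the paper, rather than a defect specific to your argument.
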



\begin{proof}
Suppose that $\delta^N_{\H}=0$.  Let $\left(Y^n\right)$ be a minimizing sequence such that $\H^N_{m+1}(Y^n)\to 0$. By compactness, we can extract a subsequence converging to some $Y^\infty \in \overline{\Rord^N}$. We denote by $q$ the number of distinct values taken by the limiting profile $Y^\infty$. We denote by $(\Yinf_l)_{1\leq l \leq q}$ the ordered distinct values in the limit, $\Yinf_1 < \Yinf_2 < \dots < \Yinf_q$. See Figure \ref{fig:notations} for the notations.
\begin{figure}
\includegraphics[width=0.90\linewidth]{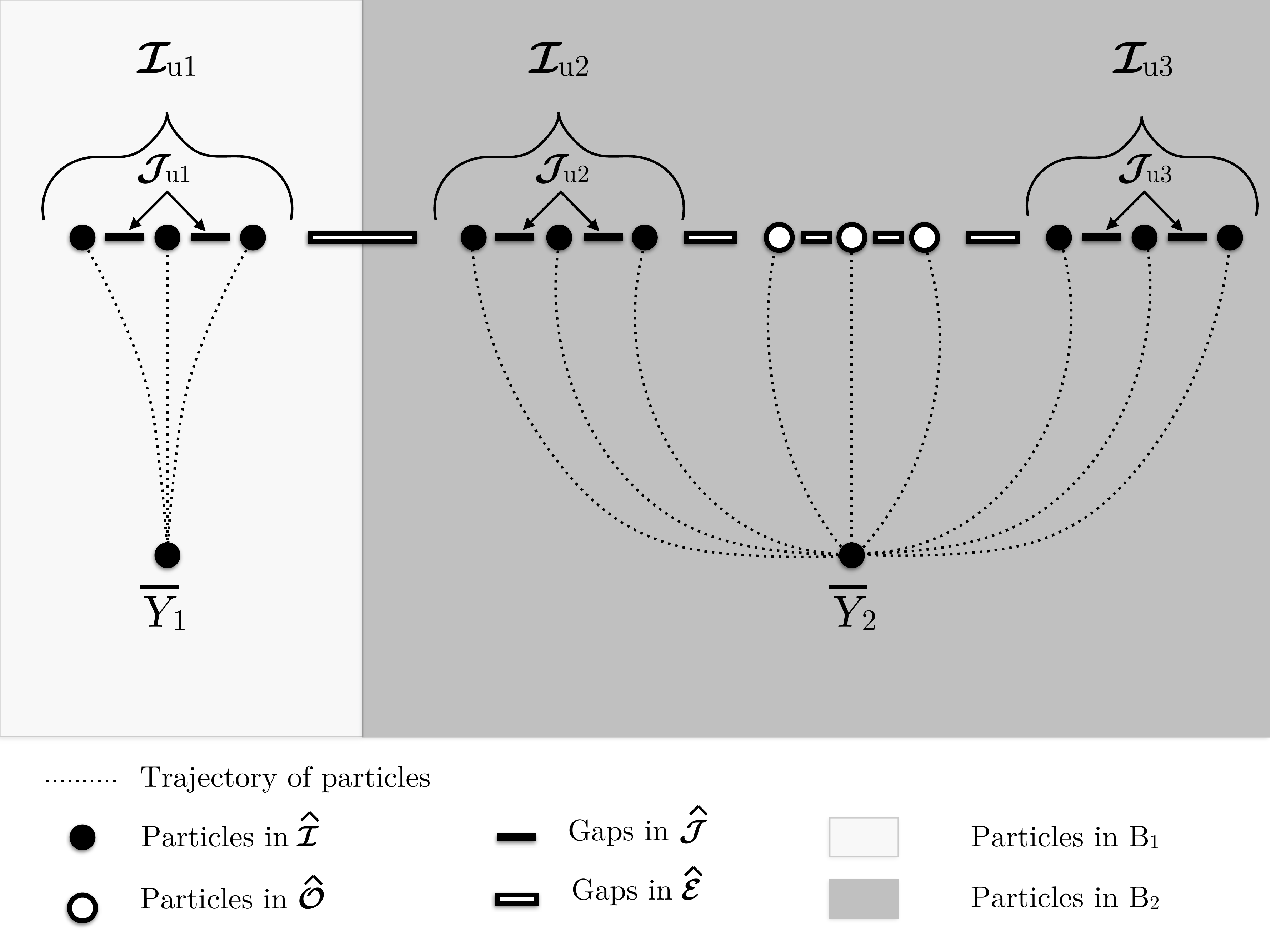}
\caption{Notations for the proof of Lemma \ref{Lem:strictconcave}. }
\label{fig:notations}
\end{figure}

We also introduce the partition of integers $[1,N]$ into sets of consecutive indices 
\[[1,N] = \bigcup_{l = 1}^q B_l\, , \quad (\forall l)\; (\forall i \in B_l)\quad \lim_{n\to +\infty} Y^n_i = \Yinf_l\, . \] 
Up to further extraction, we can assume that
\begin{equation}
(\forall i,j \in [1,N-1])\quad  \displaystyle \lim_{n\to +\infty} \frac{Y^n_{i+1}-Y^n_i}{Y^n_{j+1}-Y^n_j} = \gamma_{i,j} \in  \R_+ \cup \{ +\infty\}.
\end{equation}

Suppose $q=n$. Then for all $i< j$ , $\Yinf_{i} < \Yinf_{j}$. Hence, we can pass directly to the limit in $\H^N_{m+1}$ and $\F^N_m$.  
We obtain the existence of some $\Yinf$ such that  $|\Yinf|=1$, and more importantly,
\[\F^N_m(\Yinf)\geq 0\quad \text{and} \quad \H^N_{m+1}(\Yinf)=0\,.\] 
We deduce from the equality case in Cauchy-Schwarz inequality \eqref{eq:cauchy-schwarz H}  that there exists $\alpha \in \R$ such that 
\begin{equation} \label{eq:pointcritique}
 \nabla \F^N_m\left( \Yinf\right)+\alpha \Yinf=0.
\end{equation} 
Thus $\Yinf$ is a critical point of $\F^N_{m,\alpha}$. Taking the scalar product of \eqref{eq:pointcritique} with $\Yinf$, we get 
\[\alpha = (m-1) \F^N_m(\Yinf) \geq 0\,.\] 
If $\chi>C_N$ this is in contradiction with Corollary \ref{cro:nocritique} (take $p=N$ there). 

If $\chi=C_N$, Corollary \ref{cro:nocritique} implies $\alpha=0$, and back to equation \eqref{eq:pointcritique}, we see that $\Yinf$ is the unique critical point of $\F^N_m$ on the sphere. 


The case $q < N$ is more delicate to handle with. To this end, we isolate the fastest dynamics during blow up. 
We introduce $\hat \In$ the set of particles that blow-up with the fastest rate. It is characterized by those relative distances which are comparable to any other relative distances (including a much faster collapse), 
\[\hat \Inl =\left\{ i  \in [1,N-1] \left| \forall j\in [1,N-1], \gamma_{i,j}<+\infty \right. \right\} \quad \mbox{and}\quad \hat\In=\bigcup_{i \in \hat\Inl}[i,i+1]\, .  \]
The set $\hat \Inl$ is not empty. Otherwise, there would exist an application $S:[1,N-1]\to[1,N-1]$ such that $\gamma_{i,S(i)} = +\infty$, and accordingly $\gamma_{S(i),i} = 0$. Then, there would exist $i_0$  and $K$, such that $S^K(i_0) = i_0$. We would obtain a contradiction, since
\[
1=\gamma_{S^{K}(i_0),i_0} = \prod^{K}_{k=1}  \gamma_{S^{k}(i_0),S^{k-1}(i_0)} =0\, .
\]
We decompose $\hat\Inl$ into connected components $(\Inl_u)$,
\[ \hat\Inl= \bigcup_{u} \Inl_u\, , \] 
and we define $\In_u$ accordingly. The sets  $\In_u$ are two by two disjoint, and each $\In_u$ belongs to some part $B_l$. Moreover, each $\In_u$, made of consecutive indices, is a relative blow-up set. See Figure \ref{fig:notations} for the notations.
We denote $p_u= | \In_u |$. It is remarkable that we will prove eventually that all $p_u$ are equals.

The following properties are useful,
\begin{align}\label{eq:compdesdyn}
&\forall (i,j)\in \hat\Inl \times \left( [1,N-1]\setminus \hat\Inl \right)\quad  \gamma_{i,j}=0,\\ \label{eq:compdesdyn2}
&\forall  (i,j)\in \hat \Inl \times \hat \Inl \quad 0 < \gamma_{i,j} < +\infty\, . \end{align}
Indeed, let $i\in \hat \Inl$, and $j\in [1,N-1]$. Then either $\gamma_{i,j}=0$, or $0 < \gamma_{i,j} < +\infty$. If  $j\in \hat \Inl$, then $\gamma_{j,i}<+\infty$ implies that $\gamma_{i,j}>0$. On the other hand, if $j\notin \hat \Inl$, then there exists $k\in [1,N-1]$ such that $\gamma_{k,j} = 0$. By transitivity, $\gamma_{i,j}=\gamma_{i,k}\gamma_{k,j}  = 0$.  

According to Definition \ref{blowup}, we introduce the following notations:  $\hat \Inb =\bigcup  \Inb_u$, $\Out_u=[1,N]\setminus \In_u$, $\hat \Out =[1,N] \setminus  \hat\In$ and $\hat  \Outl =[1,N] \setminus  \hat\Inl$ . We also introduce the euclidean norm of relative distances inside $\hat \In$ (resp. $\In_u$),
\begin{equation}\label{variance}
\hat \Pi(Y) =\sqrt{\underset{i \in \hat\Inl}{\sum}\left( Y_{i+1} -Y_i \right)^2}\, , \quad \Pi_{u}= \sqrt{\underset{i  \in \Inl_u}{\sum}\left( Y_{i+1} -Y_i \right)^2}
\end{equation}
According to \eqref{eq:compdesdyn2}, for each $u$,  $\Pi_{u}$ and $\hat\Pi$ are  of the same order. We define the limiting ratio $r_u$ in accordance. Pick any $i\in \hat\Inl_u$, we have, 
\begin{equation}\label{eq:rapport}
1\geq \underset{n \rightarrow +\infty}{\lim} \displaystyle \frac{\Pi_{u}(Y^n)}{\hat\Pi (Y^n)}=  \sqrt{\displaystyle \frac{\sum_{k \in \Inl_u}(\gamma_{k,i})^2}{\sum_{k\in \hat \Inl}(\gamma_{k,i})^2 }}= r_u >0.
\end{equation}
Since $q< N$, at least two particles are collapsing. Therefore,  
\begin{equation}\label{eq:limite}
\underset{n \rightarrow +\infty}{\lim} \hat\Pi (Y^n) =0\,.
\end{equation}
As a consequence,  
\begin{equation}\label{eq:limite2}
(\forall u)\quad  \underset{n \rightarrow +\infty}{\lim} \Pi_{u}(Y^n) =0\, .
\end{equation}
This shows that each $\In_u$ is indeed a relative blow up set for $(Y^n)$. Indeed, the second condition of \eqref{strongblowupr} holds true by the very definition of $\hat\In$. On the other hand,  \eqref{eq:limite2} implies the first condition of \eqref{strongblowupr}, and \eqref{eq:compdesdyn} implies the third one.

Let $ \hat\gamma^n =    \underset{ j  \in \hat\Outl }{\max} \left( \frac{\hat\Pi (Y^n)}{Y^n_{j+1}-Y^n_j}  \right)$. According to \eqref{eq:compdesdyn}, $ \hat\gamma^n \to 0$ as $n\to +\infty$. Moreover,
\begin{align}\label{pluspetit}
(\forall u)\quad  \forall  i,j \in \In_u \times \Out_u, \quad &\displaystyle \frac{\hat\Pi (Y^n)}{Y^n_{j}-Y^n_i} \leq \hat\gamma^n,\\ \label{pluspetit2}
\forall  i\neq j \in [1,N]\times  \hat \Out, \quad &\displaystyle \left| \frac{\hat\Pi (Y^n)}{Y^n_{j}-Y^n_i} \right| \leq \hat\gamma^n.
\end{align}
Indeed, for \eqref{pluspetit} 
there exists $k \in \Inb_u$ such that 
$$\displaystyle \frac{\hat\Pi (Y^n)}{Y^n_{j}-Y^n_i} \leq \displaystyle \frac{\hat\Pi (Y^n)}{Y^n_{k+1}-Y^n_{k}} \leq \hat\gamma^n.$$
For \eqref{pluspetit2}, notice $j \in  \hat\Out$ implies  that both $j$ and $j-1$ belongs to $\hat\Outl$. Therefore,  
$$\left| \displaystyle \frac{\hat\Pi (Y^n)}{Y^n_{j}-Y^n_i} \right| \leq \displaystyle \max \left[ \frac{\hat\Pi (Y^n)}{Y^n_{j+1}-Y^n_{j}}, \frac{\hat\Pi (Y^n)}{Y^n_{j}-Y^n_{j-1}} \right] \leq \hat\gamma^n.$$
Using \eqref{eq:rapport}, \eqref{pluspetit} and \eqref{pluspetit2} we find 
\begin{align} \nonumber
\F^N_m \left(  \frac{Y^n}{\hat\Pi (Y^n)} \right) &= 
  \sum_{i=1}^{N-1} \left(  \frac{Y_{i+1}^n-Y_{i}^n}{\hat\Pi (Y^n)}  \right)^{1-m} - \chi \sum_{1\le i\neq j\le N}  \left| \frac{Y_{j}^n-Y_{i}^n}{\hat\Pi (Y^n)} \right|^{1-m}  \\ \nonumber
 &=\sum_u \left[\sum_{i \in \Inl_u} \left(  \frac{Y_{i+1}^n-Y_{i}^n}{\hat\Pi (Y^n)}  \right)^{1-m} - \chi \sum_{ i,i' \in \In_u }  \left| \frac{Y_{i'}^n-Y_{i}^n}{\hat\Pi (Y^n)} \right|^{1-m} \right]\\  \nonumber 
  &+\sum_{j  \in  \hat\Outl } \left(  \frac{Y_{j+1}^n-Y_{j}^n}{\hat\Pi (Y^n)}  \right)^{1-m} - \chi \sum_{ i\neq j \in  \hat\Out^2 }  \left| \frac{Y_{j}^n-Y_{i}^n}{\hat\Pi (Y^n)} \right|^{1-m} \\ \nonumber
 &-2 \chi \sum_u\sum_{ i \in \In_u, j \in \Out_u  }  \left| \frac{Y_{j}^n-Y_{i}^n}{\hat\Pi (Y^n)} \right|^{1-m} \\   \nonumber
&=  \sum_u \left[\left(\frac{\Pi_{u}(Y^n)}{\hat\Pi (Y^n)}\right)^{1-m}  \F^{p_u}_m \left(  \frac{Y_{\In_u}^n}{\Pi_{u}(Y^n)} \right)\right] + O\left(\left[ N+\chi N^2\right] \left(  \hat\gamma^n\right)^{m-1}\right) \\ 
\label{energie} &=\sum_u (r_u)^{1-m}  \F^{p_u}_m  \left(  Z_u \right)+ \epsilon(\frac{1}{n}),
 \end{align}
where $\epsilon(\frac{1}{n}) \underset{n \rightarrow +\infty}{\longrightarrow} 0$.
Similarly we find, for all $j \in \hat\Out $ 
\begin{align}\nonumber
 \left| \nabla_j \F^N_m\left(  \frac{Y^n }{\hat\Pi (Y^n)}\right)\right| &\leq (m-1) \left| \left(-{\left( \frac{Y_{j+1}^n -Y_{j}^n}{\hat\Pi (Y^n)}\right)^{-m}} + \left( \frac{Y_{j}^n -Y_{j-1}^n}{\hat\Pi (Y^n)}\right)^{-m}\right) \right|\\ \nonumber
&+(m-1) \chi  \sum_{k \neq j } {\left|\frac{Y_{k}^n -Y_{j}^n}{\hat\Pi (Y^n)} \right|^{-m}} \\
\label{gradext} &\leq 2(m-1) \left(   \hat\gamma^n \right)^{m} +(m-1) \chi  \sum_{k \neq j } {\left|\frac{Y_{j+1}^n -Y_{j}^n}{\hat\Pi (Y^n)} \right|^{-m}} \\ \nonumber
&\leq (m-1)\left(2+\chi N \right) \left(   \hat\gamma^n \right)^{m} \underset{n \rightarrow +\infty}{\longrightarrow} 0.
\end{align}
On the contrary, for all $u$ and $i \in \In_u$, we find
\begin{align}
\nonumber  \nabla_i \F^N_m\left(  \frac{Y^n }{\hat\Pi (Y^n)}\right) &= \nabla_i \F^{p_u}_m\left(  \frac{Y_{\In_u}^n }{\hat\Pi (Y^n)}  \right)
 +\sum_{j \in \Inb_u } (\delta_{i,j+1}-\delta_{i,j}) {\left(\frac{Y_{j+1}^n -Y_{j}^n}{\hat\Pi (Y^n)} \right)^{-m}}   \\ \nonumber 
&+(m-1) \chi  \sum_{j\in \Out_u } \sign(j-i){\left(\frac{Y_{j}^n -Y_{i}^n}{\hat\Pi (Y^n)} \right)^{-m}} \\
\nonumber &= \left(  \frac{\Pi_{u}(Y^n)}{\hat\Pi (Y^n)}  \right)^{-m} \nabla_i \F^{p_u}_m\left(  \frac{Y_{\In_u}^n }{\Pi_{u}(Y^n)}  \right) + O\left((m-1)\left(2+\chi N \right) \left(   \hat\gamma^n \right)^{m} \right)\\ \nonumber
&= (r_u)^{-m} \nabla_i \F^{p_u}_m\left( Z_u  \right) +\epsilon(\frac{1}{n}).
\end{align}
Therefore
\begin{align}
 \nonumber \left| \nabla \F^N_m\left(  \frac{Y^n}{\hat\Pi (Y^n)}  \right) \right|^2 &= \sum_u\sum_{i\in \In_u}   \left| \nabla_i \F^N_m\left(  \frac{Y^n }{\hat\Pi (Y^n)}  \right)\right|^2     + \sum_{j\in \hat\Out}   \left| \nabla_j \F^N_m\left(  \frac{Y^n }{\hat\Pi (Y^n)}  \right)\right|^2  \\
 \label{gradint} &=   \sum_u (r_u)^{-2m} \left| \nabla \F^{p_u}_m\left(  Z_u  \right)\right|^2 +\epsilon(\frac{1}{n}).
\end{align}
Finally, \eqref{energie}, \eqref{gradint} and \eqref{gradext} together yields 
\begin{align}
\nonumber \H^N_{m+1}(Y^n)&= \left( \hat\Pi (Y^n) \right)^{-2m}\left| \nabla \F^N_m\left(  \frac{Y^n}{\hat\Pi (Y^n)}  \right) \right|^2 \\
\nonumber &+  \left( \hat\Pi (Y^n) \right)^{2-2m} \left((m-1)\F^N_m\left(  \frac{Y^n}{\hat\Pi (Y^n)}  \right)\right)^2 \\
\nonumber  &=  \left( \hat\Pi (Y^n) \right)^{-2m}\left[ \sum_u (r_u)^{-2m} \left| \nabla \F^{p_u}_m\left( Z_u \right) \right|^2\right. \\
\nonumber  &+ \left. \left( \hat\Pi (Y^n) \right)^{2} \left((m-1) \sum_u (r_u)^{1-m}  \F^p_m  \left(  Z_u \right) \right)^2 + \epsilon( \frac{1}{n})\right] \\
 \label{lim} &= \left( \hat\Pi (Y^n) \right)^{-2m}\left[ \sum_u (r_u)^{-2m} \left| \nabla \F^{p_u}_m\left( Z_u \right) \right|^2+\epsilon( \frac{1}{n})\right] 
\end{align}
The limit $n\rightarrow +\infty$ in \eqref{lim} implies that for all $u$, $Z_u$ is a critical point for $\F^{p_u}_m$. This is a contradiction with Corollary \ref{cro:nocritique} unless, for any $u$, $\chi=C_{p_u}=C_p$. 
In the latter case, each relative blow up set must have the same number of particles $p$, which is the critical number of particles, and $Z_u$ is a critical point of $\F^p_m$. In particular $\F^p_m(Z_u)=0$. Remark that $q < N$ implies $p < N$.
\end{proof}

\begin{Rk}
To obtain some rigidity results about the structure of the blow-up set, as in \cite{KS1Dp}, it is necessary to understand the next order term in \eqref{lim}.
\end{Rk}

We are now in position to prove our first main theorem.
\begin{proof}[Proof of Theorem \ref{thm:explosionintro}]
(1) Assume by contradiction that blow-up does not occur. This implies that $f_{m+1}(t)\geq 0$ for all $t$, otherwise blow-up necessary occurs in finite time.
We first establish that the second moment is uniformly bounded.
According to \eqref{dynmomentdeux} and Lemma \ref{Lem:strictconcave}, if there exists no $p \in [1,N-1]$ such that $\chi = C_p$, then there exists $\delta>0$ such that  
\begin{equation}\label{etapeinutile}
\frac{d^2 f_{m+1}}{dt^2}(t) \leq - \delta \left(f_{m+1}\left(t\right)\right)^{-1}  \,. 
\end{equation}
Multiplying this differential inequality by $\frac{d f_{m+1}}{dt}(t)\geq 0 $ \eqref{dynmomentun}, and integrating over $(0,t)$, we deduce 
$$\frac12\left(\frac{d f_{m+1}}{dt}(t)\right)^2 + \delta   \ln \left( {f_{m+1}\left(t\right)} \right)  \leq C_0\,, $$
where the constant $C_0$ depends on the initial data.  
It implies that ${f_{m+1}\left(t\right)}$ is uniformly bounded by $\exp(C_0/\delta)$. In particular, back to \eqref{etapeinutile} we realize that the function ${f_{m+1}}$ is 
$\bar \delta$ concave, for some $\bar \delta >0$. Consequently, $f_{m+1}$ cannot remain nonnegative for all time. This is a contradiction.

(2) If there exists  $p \in [1,N-1]$ such that $\chi = C_p$, Lemma \ref{Lem:strictconcave} provides us with the following alternative: either $\H$ is uniformly bounded below along the trajectory, hence blow-up occurs in finite time; or the renormalized locations $Y=\frac{X}{|X|}$ blow up in infinite time and there exists a relative blow up set containing $p$ particles and converging towards the critical point of $\F^p_m$ on the unit sphere, up to extraction of some subsequence.

\section{Dichotomy and discussion}\label{sec:conc}

To complete the results of this paper, we establish that the minimal number of particles taking part in each blow-up set is at least $k+1$, where $k$ is defined such as $\chi < C_k$. As a consequence, the solution exists for all time in the subcritical regime. In the super critical regime, the next step of investigation would be to describe in a refined way the blow up mechanism in the spirit of \cite{KS1Dp}. For instance, one may ask what is the mass contained in the first singularity, or whether the energy must become negative before the occurrence of blow up. A first step towards these issue is given by the following propositions. The refined description of blow-up, similar to \cite{KS1Dp}, is left for future work. Also left for future work is the asymptotics of renormalized solutions in the subcritical regime. In the logarithmic case, it was proven in \cite{BCC08} that solutions converge exponentially fast to a unique equilibrium state after suitable parabolic rescaling (see \cite{CC12,Campos-Dolbeault} for a similar result in the continuous case). However, the argument of \cite{BCC08} cannot be readily extended to the present case because the nonzero homogeneity creates some additional terms that cannot be handled easily (see \cite{Franca} for similar issues in the continuous setting).

\begin{Def}[Blow-up of particles]\label{weakblowup}
Let $T \in [0,+\infty)$ and $X \in C^0\left([0,T),\Rord^N\right)$. Let $\In_w =[l,r]  \subset [1,N]$ be a set of consecutive indices,
$\Out_w =[1,N] \setminus  \In$, $\Inl_w= [l,r-1] $ and  $\Inb_w = \left( \{l-1\} \cup \{ r \}\right) \cap [1,N-1]$. We say that $\In$  weakly blows up at time T if
\begin{equation}\label{weakblowup}
\forall i \in \Inl_w  \quad \liminf_{t\to T^- } \left( X_{i+1}-X_i \right) =0.
\end{equation}
when the set  $\In$ is maximal for the inclusion, 
we refer to it as a weak blow-up set.
\end{Def}

\begin{Rk}
For a solution $X$ of the the discrete gradient flow \eqref{flotgradientdiscretexplicite}--\eqref{flotgradientdiscretexplicite-boundary}, either $X$ exists for all time or there exists a weak blow-up set for $X$.
\end{Rk}

\begin{Prop}\label{blowupaumoinsk}.
Let $X$ be a solution of the the discrete gradient flow \eqref{flotgradientdiscretexplicite}--\eqref{flotgradientdiscretexplicite-boundary} and $\chi < C_k$. Any finite time weak blow up set contains at least $k+1$ particles.
\end{Prop}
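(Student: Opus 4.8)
\textbf{Proof plan for Proposition \ref{blowupaumoinsk}.}

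The plan is to argue by contradiction: suppose a finite time weak blow-up set $\In_w$ contains at most $k$ particles, i.e.\ $p := |\In_w| \leq k$. The key idea is that on a blow-up set the interaction and internal energies restricted to $\In_w$ become dominant, and since $\chi < C_k \leq C_p$ (recall $(C_p)$ is decreasing), the restricted functional $\F^p_m$ is strictly positive and in fact bounded below by a positive multiple of the internal energy. I will exploit this to show that the local contribution to $\F^N_m$ blows up to $+\infty$, while controlling the remaining terms, thereby contradicting the fact that $\F^N_m(X(t))$ is nonincreasing along the flow (as established in the proof of Proposition \ref{easycase}) and hence bounded above by $\F^N_m(X_0)$.

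First I would extract, along a sequence $t_n \to T^-$ realizing the $\liminf$ in \eqref{weakblowup}, a relative blow-up set $\In \subseteq \In_w$ in the sense of Definition \ref{blowup}, using the extraction procedure that is guaranteed to work once some gap tends to zero. Let $p = |\In| \leq |\In_w| \leq k$, and let $\Pi_\In(X(t_n))$ denote the associated scale from Proposition \ref{def:limitprofil}, which tends to $0$. The heart of the argument is then a localization estimate entirely analogous to the computation \eqref{energie} in the proof of Lemma \ref{Lem:strictconcave}: writing $\F^N_m$ on the rescaled configuration $X(t_n)/\Pi_\In(X(t_n))$, the terms internal to $\In$ reconstruct $\Pi_\In^{1-m}\,\F^p_m(Z)$, where $Z$ is the limiting profile, while all cross terms between $\In$ and its complement, as well as the terms external to $\In$, are of lower order (controlled by the analogue of $\hat\gamma^n \to 0$, since distances across $\Inb$ dwarf the internal scale). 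The definition of $C_p$ together with $\chi < C_p$ gives the strict lower bound $\F^p_m(Z) \geq (C_p - \chi)\,\W_{1-m}(Z) > 0$, so that after unscaling,
\begin{equation*}
\F^N_m(X(t_n)) \geq \Pi_\In(X(t_n))^{1-m}\big(\F^p_m(Z) + o(1)\big) \to +\infty,
\end{equation*}
since $1 - m < 0$ and $\Pi_\In \to 0$. This contradicts the uniform upper bound $\F^N_m(X(t_n)) \leq \F^N_m(X_0)$.

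The main obstacle I anticipate is making the localization rigorous when the weak blow-up set is larger than the relative blow-up set, i.e.\ when $\In \subsetneq \In_w$: one must verify that restricting to the genuinely fastest-collapsing connected cluster $\In$ still yields $p \leq k$ and that the leftover particles of $\In_w$ behave as "external" in the estimate, contributing only lower-order terms. This requires carefully tracking which ratios $\gamma_{i,j}$ are finite versus infinite, exactly as in the definition of $\hat\In$ in Lemma \ref{Lem:strictconcave}; the subtlety is that $\Pi_\In$ must be the scale of the fastest cluster so that all comparison ratios across $\Inb$ vanish. A secondary technical point is ensuring the cross-term and external-term bounds hold uniformly along the extracted sequence, which should follow from the same $O((\hat\gamma^n)^{m-1})$ bookkeeping used in \eqref{energie}. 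Once the dominant internal term is isolated with the correct negative power of a vanishing scale, positivity of $\F^p_m$ in the regime $\chi < C_p$ closes the argument.
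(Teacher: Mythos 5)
Your overall strategy --- localize the energy around the small cluster and let the negative power of the vanishing scale force $\F^N_m(X(t_n))\to+\infty$, contradicting $\F^N_m(X(t))\le\F^N_m(X_0)$ --- has a genuine gap: the terms you discard as ``lower order'' are not lower order in general. The complement of $\In$ may contain \emph{other} collapsing clusters (nothing in the hypotheses prevents several weak blow-up sets from forming simultaneously, possibly at much faster rates), and for such a cluster $\In'$ with $p'>k$ particles the local interaction term $-\chi\sum_{i\ne j\in\In'}|X_j-X_i|^{1-m}$ can diverge to $-\infty$ like $\Pi_{\In'}^{1-m}$, overwhelming or cancelling the positive divergence coming from $\In$. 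The bookkeeping of \eqref{energie} in Lemma \ref{Lem:strictconcave} only relegates to $O((\hat\gamma^n)^{m-1})$ the terms involving at least one particle \emph{outside all} fastest clusters; it keeps every fastest cluster at leading order, and the cluster you care about need not even be among the fastest ones. What your argument actually yields is (a version of) Proposition \ref{blowupaumoinskr}: \emph{at least one} relative blow-up set must have nonpositive local energy and hence more than $k$ particles. That suffices for Theorem \ref{thm:subcase}, but it does not prove the present statement, which concerns \emph{every} weak blow-up set.

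The paper's proof avoids this by never touching the global energy. It uses the maximality of $\In_w$ to obtain a uniform separation $|X_j-X_i|\ge 1/c$ between $\In_w$ and its complement, then differentiates the \emph{local} energy $\F^p_m$ restricted to $\In_w$ along the flow: the coupling with the exterior is controlled by the separation, and Young's inequality gives $\frac{d}{dt}\F^p_m\le-\frac12\|\nabla\F^p_m\|^2+A^2$, hence $\F^p_m(X(t))\le\F^p_m(X(0))+TA^2$ on the finite time interval. The local HLS inequality of Proposition \ref{thm:bornepardessous} with $\theta=\chi/C_p<1$ then bounds $\sum_{i\in\Inl_w}(X_{i+1}-X_i)^{1-m}$, i.e.\ bounds the gaps inside $\In_w$ away from zero --- a contradiction that is completely insensitive to whatever happens outside $\In_w$. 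To rescue your route you would have to replace the global energy bound by a bound on this local energy, which is exactly the dissipation estimate above.
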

A proof of an analogous result in the case of logarithmic homogeneity can be found in \cite{KS1Dp}. The strategy there was to localize the energy on specific subsets of particles, and deduce fruitful estimates. 
\begin{proof}
We consider a weak blow-up set $\In_w$ made of $p\leq k$ particles. 
Since $\In_w$ is a weak blow-up set, by maximality, we have
$$\min_{j\in \Inb_w} \left(  \liminf_{t\to T^- } \left(X_{j+1}-X_{j}\right)\right)>0.$$ Therefore there exists $c>0$ such that for any $j \in \Out_w$, $i\in \In_w$ and $s\in [0,T)$,
\begin{equation}\label{courteportee}
|X_j\left(s\right)-X_i\left(s\right)|\geq \frac1c.
\end{equation} 
Let us consider the local energy
\[
\F^p_{m} \left( X \right)=\sum_{i \in \Inl_w   } \left(X_{i+1}-X_i\right)^{1-m} -\chi \sum_{ (i, j)\in \In_w \times \In_w \setminus \lbrace i \rbrace}   \vert X_i-X_j \vert^{1-m} .
\]
Thanks to \eqref{courteportee} and the Young inequality we find $A>0$ such that
\begin{align*}
\frac{d}{dt} \F^p_{m}  &=-\left< \nabla \F^p_{m} ,(\nabla_i \F^N_{m} )_{i\in \In_w} \right>_{\In_w} \\
&=-\| \nabla \F^p_{m} \|^2_{\ell^2(\In_w)} \\
&\quad \quad  +\left< \nabla \F^p_{m} , -\dfrac{\delta_{i,l+p} }{(X_{l+p+1}-X_{l+p})^{m}} +  \dfrac{\delta_{i,l} }{(X_{l}-X_{l-1})^{m}}+2\chi \sum_{k\in \Out_w}\frac{1}{(X_k-X_i)^{m}} \right>_{\In_w} \\
&\leq -\| \nabla \F^p_{m} \|^2_{\ell^2(\In_w)} +\left(2+2\chi \right)\| \nabla \F^p_{m} \|_{\ell^2(\In_w)} \|\sum_{k\in \Out_w}\frac{1}{(X_k-X_i)^{m}} \|_{\ell^2(\In_w)}  \\
& \leq -\frac{1}{2} \|\nabla \F^p_{m} \|^2+A^2,
\end{align*}
and therefore for any $t>0$:
\begin{equation}\label{ekdecroissantalmost}
\F^p_{m}  \left( X(t) \right)\leq \F^p_{m}  \left( X(0) \right)+ tA^2.
\end{equation}
We define $\theta$ such that $C_p=\frac{\chi }{\theta}$, observe that $\theta=\frac{\chi}{C_p}<1$ since $p\leq k$.
Using the discrete HLS inequality given in Proposition \ref{thm:bornepardessous} and \eqref{ekdecroissantalmost} we obtain for any $t\in[0,T)$ and $i \in \Inl_w$:
\[
\left(X_{i+1}-X_i\right)^{1-m} \leq \sum_{i \in \Inl_w  } \left(X_{i+1}-X_i\right)^{1-m} \leq \frac{\F^p_{m} (0)+TA^2}{1-\theta}.
\]
\[
\left(X_{i+1}-X_i\right) \geq \left(\frac{1-\theta}{\F^p_{m} (0)-TA^2}  \right)^{\frac{1}{m-1}}.
\]
It is a contradiction with $\In_w$ being a weak blow-up set and proves Proposition \ref{blowupaumoinsk}.
\end{proof}
In the subcritical case this proposition proves Theorem \ref{thm:subcase}. In the super critical case it proves that a weak blow up set contains at least the critical number of particles. Using similar computations as  done in the proof of Lemma \ref{Lem:strictconcave}, one can obtained the following proposition.
\begin{Prop}\label{blowupaumoinskr}
Let $\chi < C_k$ and $X$ be a solution of the the discrete gradient flow \eqref{flotgradientdiscretexplicite}--\eqref{flotgradientdiscretexplicite-boundary} which blows up in finite time, then at least one relative blow up set has a nonpositive local energy and contains more than $k+1$ particles.
\end{Prop}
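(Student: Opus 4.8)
The plan is to feed the blow-up trajectory into the machinery of Lemma \ref{Lem:strictconcave}, and then to combine the resulting separation-of-scales identity with the monotonicity of $\F^N_m$ along the flow and with the discrete HLS inequality of Proposition \ref{thm:bornepardessous}.

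First I would set up the extraction. Since $X$ blows up in finite time $T$, there is an index $i_0$ and a sequence $t_n \to T$ with $X_{i_0+1}(t_n)-X_{i_0}(t_n)\to 0$. As $f_{m+1}$ is concave with finite initial slope on the finite interval $[0,T)$, the norm $|X(t)|$ stays bounded, so after normalising $Y^n:=X(t_n)/|X(t_n)|$ lies on the unit sphere and, up to extraction, converges to a limit with $q<N$ distinct values. I would then run verbatim the combinatorial construction in the proof of Lemma \ref{Lem:strictconcave}: isolate the set $\hat\In$ of fastest-collapsing gaps, split it into its connected components $(\In_u)$, each a relative blow-up set with $p_u=|\In_u|$ particles and limiting profile $Z_u\in\Rord^{p_u}$, and record the identity \eqref{energie}, namely $\F^N_m\!\left(Y^n/\hat\Pi(Y^n)\right)=\sum_u (r_u)^{1-m}\F^{p_u}_m(Z_u)+\epsilon(1/n)$ with every $r_u>0$. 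Crucially, \eqref{energie} only uses the collapse structure of the sequence, not that it minimises $\H^N_{m+1}$, so it applies unchanged to this $Y^n$.

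Next I would read off the sign of the limiting energy. By homogeneity of $\hat\Pi$ and of $\F^N_m$ one has $Y^n/\hat\Pi(Y^n)=X(t_n)/\hat\Pi(X(t_n))$, whence $\F^N_m\!\left(Y^n/\hat\Pi(Y^n)\right)=\hat\Pi(X(t_n))^{m-1}\F^N_m(X(t_n))$. Here $\hat\Pi(X(t_n))=|X(t_n)|\,\hat\Pi(Y^n)\to 0$ by \eqref{eq:limite} and the boundedness of $|X|$. Since the flow decreases energy, $\F^N_m(X(t_n))\le \F^N_m(X_0)=:E_0$, so $\hat\Pi(X(t_n))^{m-1}\F^N_m(X(t_n))\le \hat\Pi(X(t_n))^{m-1}E_0\to 0$ regardless of the sign of $E_0$. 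Passing to the limit in \eqref{energie} therefore gives $\sum_u (r_u)^{1-m}\F^{p_u}_m(Z_u)\le 0$, and since every weight $(r_u)^{1-m}$ is positive, at least one component satisfies $\F^{p_{u_0}}_m(Z_{u_0})\le 0$: this is the relative blow-up set with nonpositive local energy.

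Finally, to count its particles I would invoke the discrete HLS inequality. Because $Z_{u_0}\in\Rord^{p_{u_0}}$ has all gaps positive, $\U_m(Z_{u_0})>0$, and Proposition \ref{thm:bornepardessous} gives $\F^{p_{u_0}}_m(Z_{u_0})\ge \left(1-\chi/C_{p_{u_0}}\right)\U_m(Z_{u_0})$. If $p_{u_0}\le k$ then, $(C_p)$ being decreasing, $C_{p_{u_0}}\ge C_k>\chi$, which forces $\F^{p_{u_0}}_m(Z_{u_0})>0$ and contradicts the previous step; hence $p_{u_0}\ge k+1$. Thus $\In_{u_0}$ is a relative blow-up set with nonpositive local energy containing at least $k+1$ particles. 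The main obstacle is the identity \eqref{energie} itself: rigorously absorbing the cross-interactions between distinct blow-up cores and between a core and the far particles into the error $\epsilon(1/n)$. This is exactly the bookkeeping already carried out via \eqref{pluspetit}--\eqref{pluspetit2} in Lemma \ref{Lem:strictconcave}, which I would import directly; the only genuinely new input is replacing ``$Y^n$ minimises $\H^N_{m+1}$'' by ``$\F^N_m$ is nonincreasing along the flow and $\hat\Pi\to 0$'', which supplies the sign of the limiting energy.
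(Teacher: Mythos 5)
Your argument is correct and follows essentially the same route as the paper: feed the blow-up sequence into the decomposition \eqref{energie} from Lemma \ref{Lem:strictconcave}, use the decay of $\F^N_m$ along the flow together with $\hat\Pi\to 0$ to force $\sum_u (r_u)^{1-m}\F^{p_u}_m(Z_u)\le 0$, and finish with the discrete HLS inequality of Proposition \ref{thm:bornepardessous} — indeed you make explicit both the boundedness of $|X|$ and the particle count, which the paper's rather telegraphic proof leaves implicit. The one point to patch is your claim that $Y^n$ necessarily converges to a profile with $q<N$ distinct values: if $|X(t_n)|\to 0$ with all gaps collapsing at comparable rates, $Y^n$ may converge to a profile with all values distinct, but then the whole configuration $[1,N]$ is the relative blow-up set, one still has $\hat\Pi(X(t_n))=|X(t_n)|\,\hat\Pi(Y^n)\to 0$, and the same limiting argument gives a nonpositive energy for a set of $N\ge k+1$ particles.
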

\begin{proof}
Let $Y^n$ be a subsequence of $\frac{X}{|X|}$. 
Following the proof of Lemma \ref{Lem:strictconcave} we can extract a subsequence and use equation \eqref{energie} that is:
\begin{align*}
\F^N_m(X_0) &\geq |X|^{1-m} \F^N_m(Y^n) =  |X|^{1-m} |\hat\Pi (Y^n)|^{1-m} \F^N_m \left(  \frac{Y^n}{\hat\Pi (Y^n)} \right) \\
&=  {\hat\Pi (Y^n)}^{1-m}\F^N_m \left(  \frac{Y^n}{\hat\Pi (Y^n)} \right) \\ &= |X|^{1-m} |\hat\Pi (Y^n)|^{1-m}\left(\sum_u (r_u)^{1-m} \F^p_m  \left(  Z_u \right)+ \epsilon(\frac{1}{n}) \right).
\end{align*}

If the second moment of $X$ is bounded we deduce
\begin{equation}
\sum_u (r_u)^{1-m}  \F^p_m  \left(  Z_u \right)\leq 0.
\end{equation}
In particular if $\chi=C_N$ let $V$ be the unique minimizer of $\F^N_m$ on the sphere. If moreover the second moment $X$ is bounded then $v=1$ and $\lim_{t \to \infty }Y(t)=V$.
\end{proof}

\end{proof}

To go further one need to prove for example an Harnack inequality for the weak blow up set, in the spirit of \cite{KS1Dp}. This question is left to a futur work, together with the study of the convergence towards equilibrium.

\section{Numerics}\label{sec:Numerics}

We present numerical simulations of the gradient flow system \eqref{flotgradientdiscretexplicite}-\eqref{flotgradientdiscretexplicite-boundary} which are driven by, and illustrate, previous analysis. Our main message  is that dynamics for $m>1$ are more nonlinear than the logarithmic case $m=1$. In particular, the evolution of the second momentum $|X(t)|^2$ is linear in the latter case, whereas it is genuinely concave in the case $m>1$.

In this section, we set $m = 1.2$, $N = 200$, and $\chi=1.45$ which is slightly above the critical value $\chi_m(N)$.

\begin{figure}
\includegraphics[width = 0.48\linewidth]{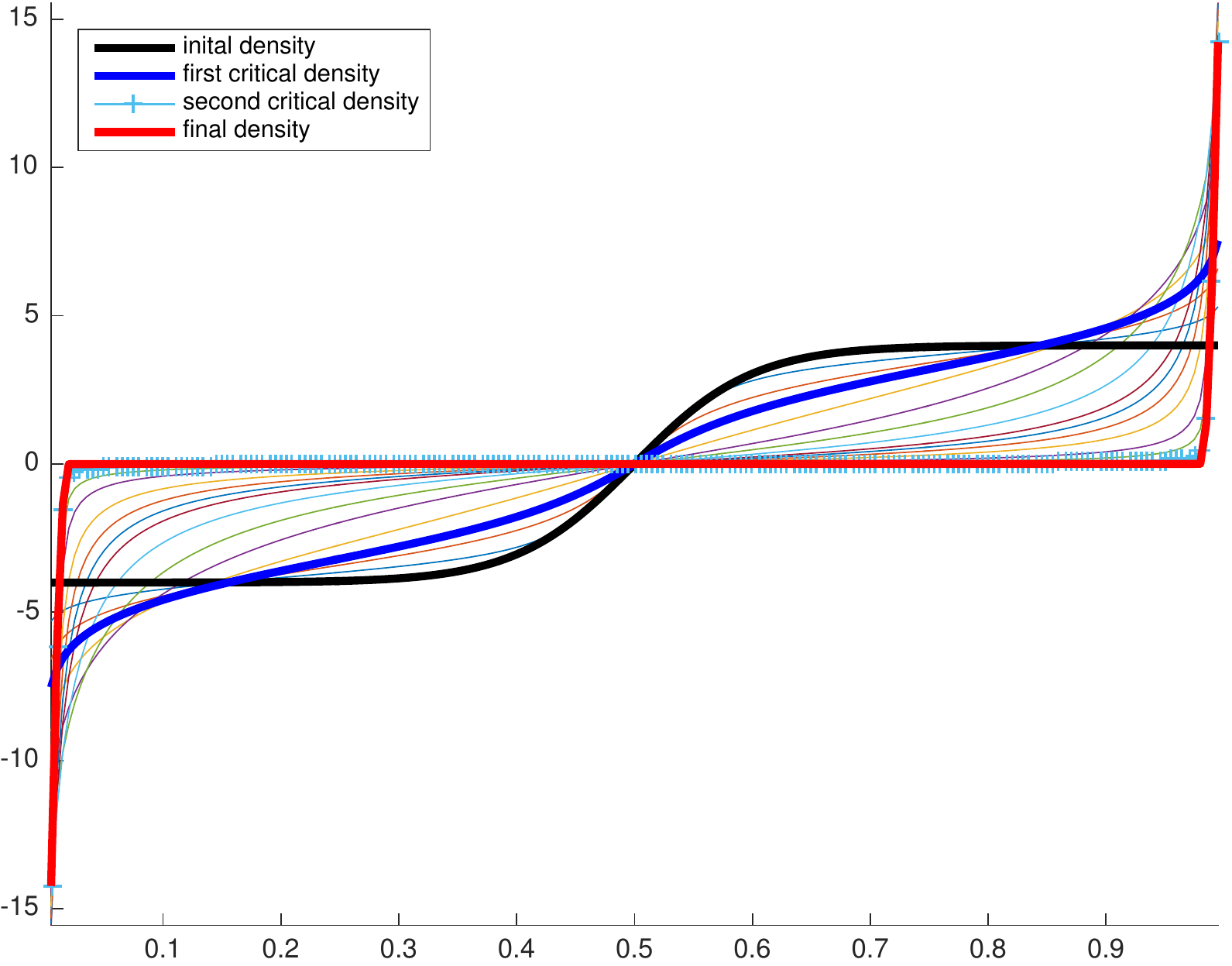}\quad
\includegraphics[width = 0.48\linewidth]{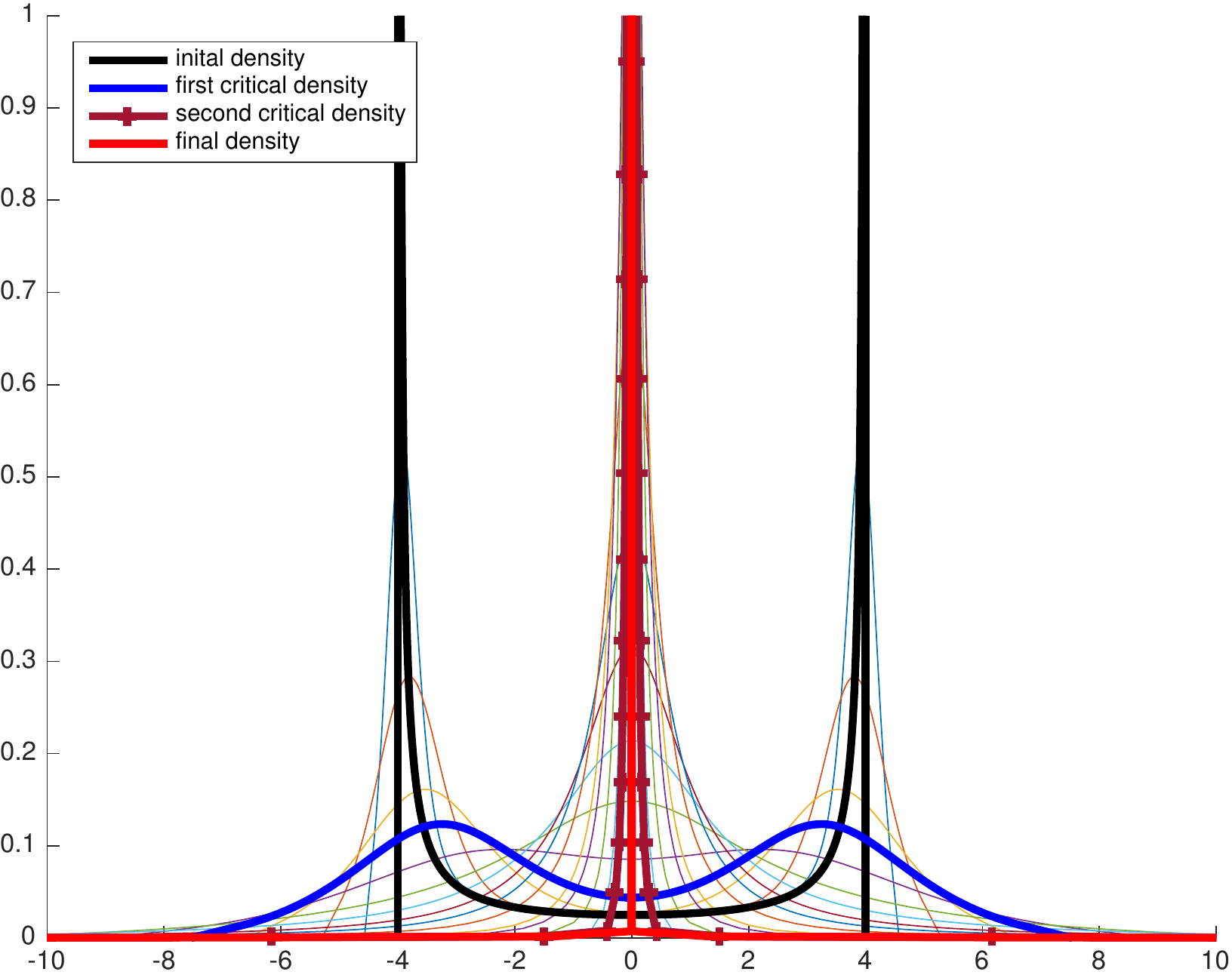}
\caption{Density and repartition functions.}
\label{fig:dense}
\end{figure}

\subsection{Methodology}
We used an Euler implicite scheme wich involves Newton's algorithm at each time step to compute $X(t_{n+1})$ from $X(t_n)$. This is equivalent to a steepest-descent scheme, also known as the Jordan-Kinderlehrer-Otto scheme in our context \cite{JKO,GosTos06,BCC08}.

The initial state is a regular discretization of $X_0(p)=4*\tanh\left(10*\left(p-0.5\right)\right)$ (see Fig. \ref{fig:dense}(left)). It corresponds to an initial density which is equally concentrated around two peaks (see Fig. \ref{fig:dense}(right)). The two peaks are chosen sufficiently far apart, so that the initial energy is positive, $\F(X_0)>0$.

In order to obtain better accuracy in some interesting time ranges, we used a non uniform time step (details given below).
\begin{figure}
\includegraphics[width = 0.48\linewidth]{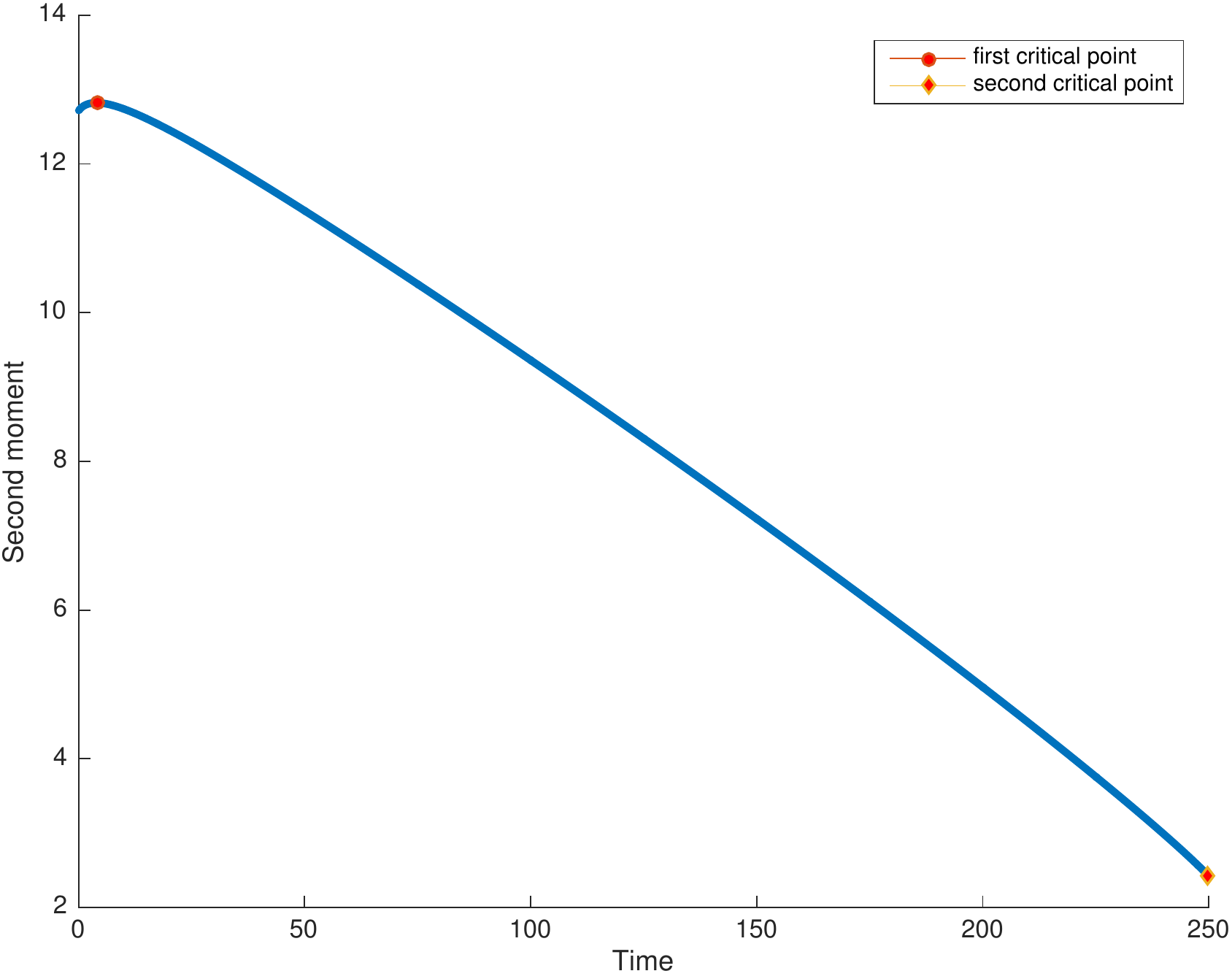}\quad
\includegraphics[width = 0.48\linewidth]{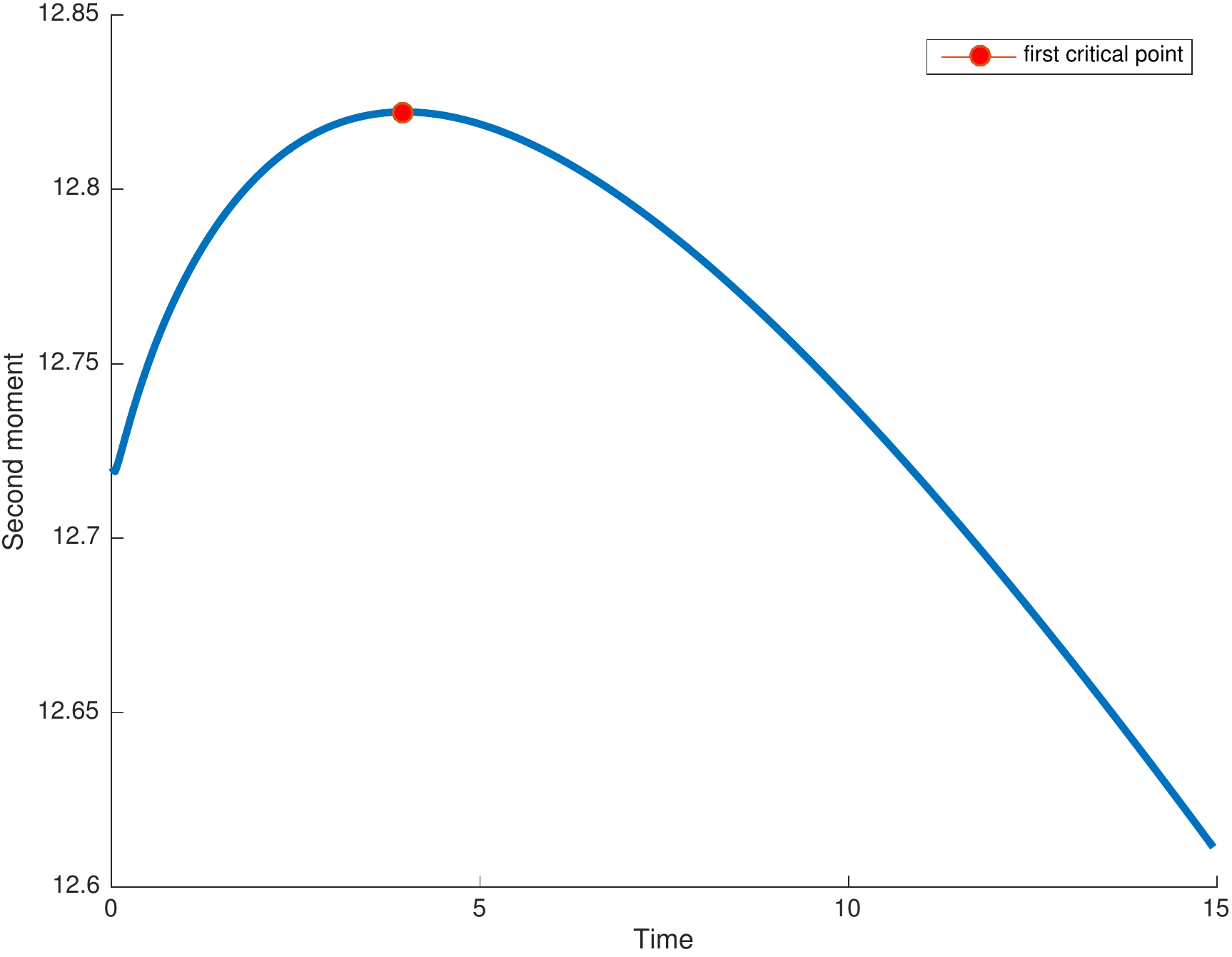}\\
\caption{(Left) Time evolution of the second moment $|X(t)|^2$ accross the time range of simulation. Clearly, it is a genuinely concave function, as opposed to the logarithmic case $m = 1$. (Right) Zoom on the initial phase. The red dot indicates the maximum of the second moment, wich coincides with the time at which energy changes sign \eqref{eq:1st derivative X^2}. The second red dot in the left plot is a marker of the blow-up phase (see Fig. \ref{fig:freeenergy}).}
\label{fig:secondmoment}
\end{figure}

\begin{figure}
\includegraphics[width = 0.48\linewidth]{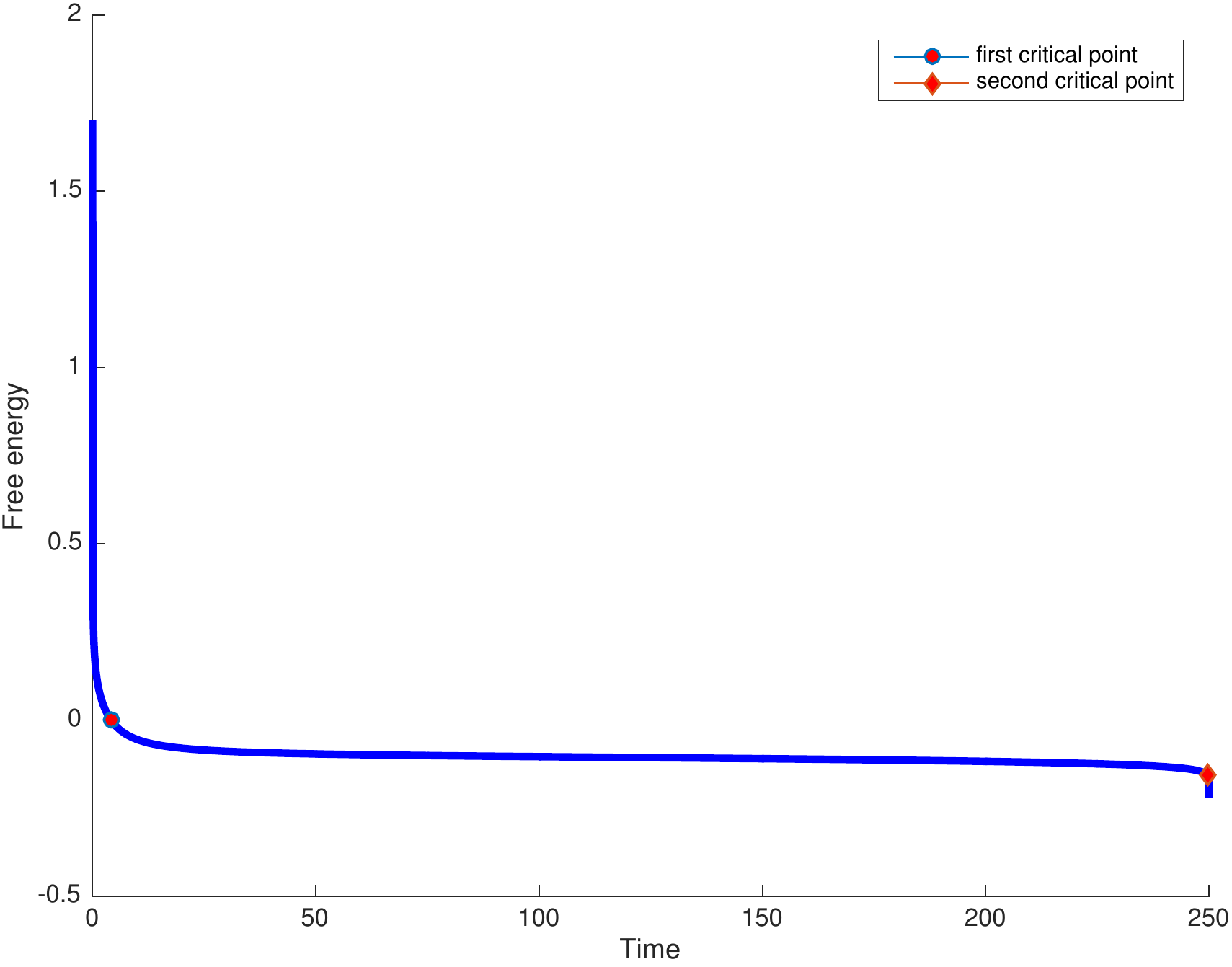}\quad
\includegraphics[width = 0.48\linewidth]{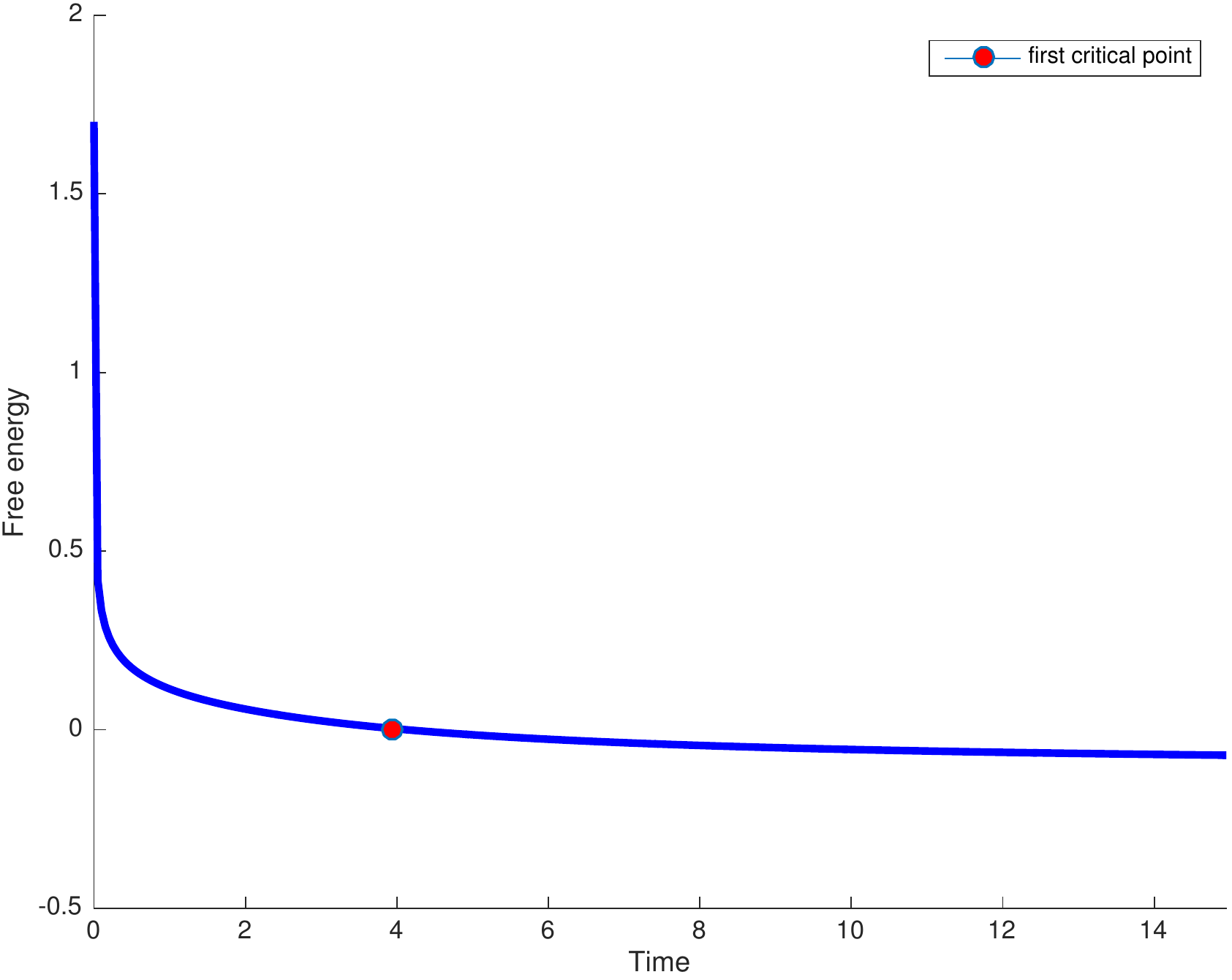}\\
\includegraphics[width = 0.48\linewidth]{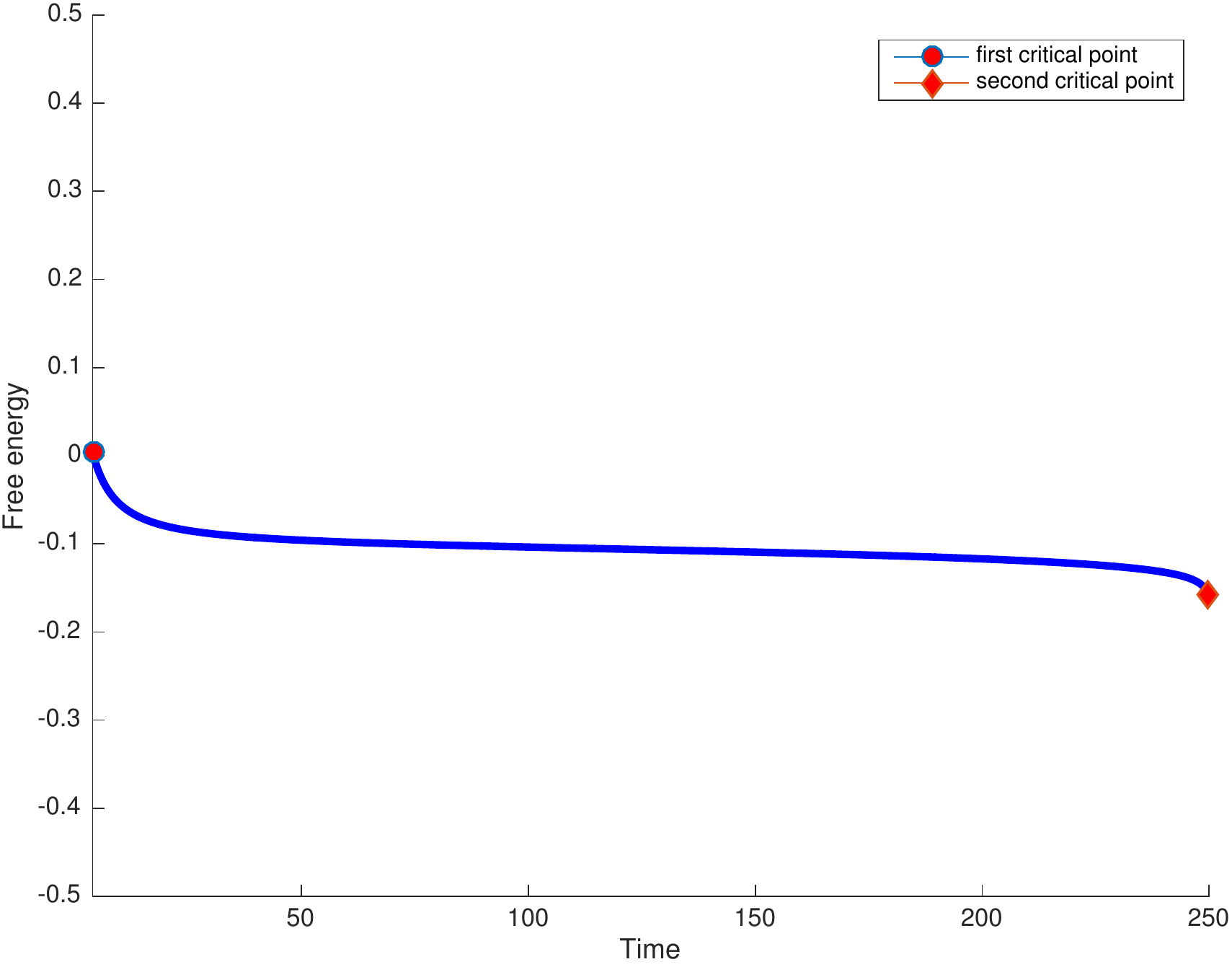} \quad
\includegraphics[width = 0.48\linewidth]{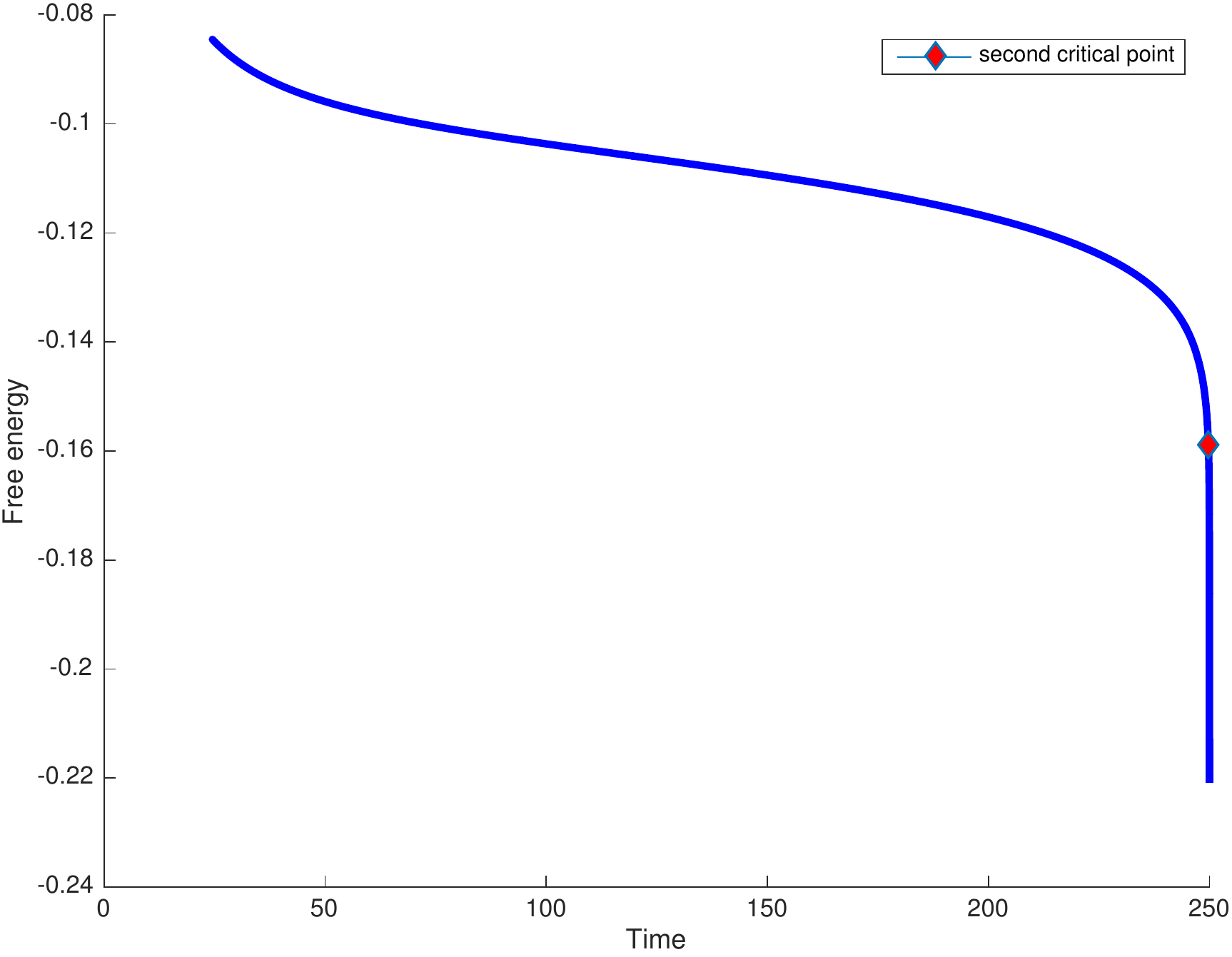}
\caption{Time evolution of the free energy $\F_m^N(X(t))$ (global evolution plus zooms on three phases).}
\label{fig:freeenergy}
\end{figure}

\subsection{Results}
Firstly, we confirm that the second moment is genuinely concave \eqref{eq:2nd derivative X^2}, see Fig. \ref{fig:secondmoment}. Moreover, it changes monotonicity after some time $t_0\approx 4$, in accordance with analysis developped in Section \ref{sec:BU}. Consequently, the solution blows-up in finite time.


Nonlinear effects are more visible on the evolution of the energy functional along the trajectory $\F(X(t))$. We distinguish between three phases. In a first stage, the energy decays rapidly from an initial positive value, to negative values. This corresponds to rapid smoothing and rearrangement of the initial density (Fig. \ref{fig:dense}(right)). During this first phase, the second moment grows, indicating a global spreading of particles  (Fig. \ref{fig:dense}(right)). In a second stage, the second moment decays almost linearly (Fig. \ref{fig:secondmoment}), indicating the global shrinkage of the cloud of particles. However, this happens at an approximately constant energy level (Fig. \ref{fig:freeenergy}). After a relatively long phase of rearrangement at constant energy, the energy suddenly drops down, indicating aggregation of a subset of particles into a blow-up set.

As mentioned above, time steps were not chosen uniformly. More precisely, $\Delta t = 5.10^{-2}$ up to $t_0=4$, corresponding to the initial phase of growth of the second moment, and dramatic decrease of the energy.
Then, time step was increased to $\Delta t = 5.10^{-1}$, during the long phase of rearrangement at almost constant energy, up to the time where the Newton's algorithm did not converge to a solution of the implicit Euler scheme (here, $t_1 = 249.75$). Lastly, the time step was decreased in an adaptive manneer in order to continue iterations as close as possible to the blow-up time.

\bibliographystyle{abbrv}

\begin{thebibliography}{}

\end{thebibliography}


\begin{thebibliography}{10}

\bibitem{AGSbook}
L.~Ambrosio, N.~Gigli, and G.~Savar{{\'e}}.
\newblock {\em Gradient flows in metric spaces and in the space of probability
  measures}.
\newblock Lectures in Mathematics ETH Z{\"u}rich. Birkh{\"a}user Verlag, Basel,
  second edition, 2008.

\bibitem{Breview13}
A.~Blanchet.
\newblock On the parabolic-elliptic patlak-keller-segel system in dimension 2
  and higher.
\newblock {\em S{\'e}minaire {\'e}quations aux d{\'e}riv{\'e}es partielles},
  2014.

\bibitem{BCC08}
A.~Blanchet, V.~Calvez, and J.~A. Carrillo.
\newblock Convergence of the mass-transport steepest descent scheme for the
  subcritical {P}atlak-{K}eller-{S}egel model.
\newblock {\em SIAM J. Numer. Anal.}, 46(2):691--721, 2008.

\bibitem{BCL09}
A.~Blanchet, J.~A. Carrillo, and P.~Lauren{\c{c}}ot.
\newblock Critical mass for a {P}atlak-{K}eller-{S}egel model with degenerate
  diffusion in higher dimensions.
\newblock {\em Calc. Var. Partial Differential Equations}, 35(2):133--168,
  2009.

\bibitem{Franca}
V.~Calvez, J.~Carrillo, and F.~Hoffmann.
\newblock In preparation.

\bibitem{CC12}
V.~Calvez and J.~A. Carrillo.
\newblock Refined asymptotics for the subcritical {K}eller-{S}egel system and
  related functional inequalities.
\newblock {\em Proc. Amer. Math. Soc.}, 140(10):3515--3530, 2012.

\bibitem{KS1Dp}
V.~Calvez and T.~O. Gallou{\"e}t.
\newblock {Particle approximation of the one dimensional Keller-Segel equation,
  stability and rigidity of the blow-up.}
\newblock {\em DCDS-A}, to appear, 2015.

\bibitem{CPS07}
V.~Calvez, B.~Perthame, and M.~Sharifi~tabar.
\newblock Modified {K}eller-{S}egel system and critical mass for the log
  interaction kernel.
\newblock In {\em Stochastic analysis and partial differential equations},
  volume 429 of {\em Contemp. Math.}, pages 45--62. Amer. Math. Soc.,
  Providence, RI, 2007.

\bibitem{Campos-Dolbeault}
J.~F. Campos and J.~Dolbeault.
\newblock Asymptotic {E}stimates for the {P}arabolic-{E}lliptic
  {K}eller-{S}egel {M}odel in the {P}lane.
\newblock {\em Comm. Partial Differential Equations}, 39(5):806--841, 2014.

\bibitem{DolTos13}
J.~Dolbeault and G.~Toscani.
\newblock Improved interpolation inequalities, relative entropy and fast
  diffusion equations.
\newblock {\em Ann. Inst. H. Poincar{\'e} Anal. Non Lin{\'e}aire},
  30(5):917--934, 2013.

\bibitem{DolTos15}
J.~Dolbeault and G.~Toscani.
\newblock Best matching {B}arenblatt profiles are delayed.
\newblock {\em J. Phys. A}, 48(6):065206, 14, 2015.

\bibitem{GosTos06}
L.~Gosse and G.~Toscani.
\newblock Lagrangian numerical approximations to one-dimensional
  convolution-diffusion equations.
\newblock {\em SIAM J. Sci. Comput.}, 28(4):1203--1227 (electronic), 2006.

\bibitem{HiPa09}
T.~Hillen and K.~J. Painter.
\newblock A user's guide to {PDE} models for chemotaxis.
\newblock {\em J. Math. Biol.}, 58(1-2):183--217, 2009.

\bibitem{JKO}
R.~Jordan, D.~Kinderlehrer, and F.~Otto.
\newblock The variational formulation of the {F}okker-{P}lanck equation.
\newblock {\em SIAM J. Math. Anal.}, 29(1):1--17, 1998.

\bibitem{LiebLoss97}
E.~H. Lieb and M.~Loss.
\newblock {\em Analysis}, volume~14 of {\em Graduate Studies in Mathematics}.
\newblock American Mathematical Society, Providence, RI, 1997.

\bibitem{McCann97}
R.~J. McCann.
\newblock A convexity principle for interacting gases.
\newblock {\em Adv. Math.}, 128(1):153--179, 1997.

\bibitem{Otto01}
F.~Otto.
\newblock The geometry of dissipative evolution equations: the porous medium
  equation.
\newblock {\em Comm. Partial Differential Equations}, 26(1-2):101--174, 2001.

\bibitem{Sugiyama}
Y.~Sugiyama.
\newblock Global existence in sub-critical cases and finite time blow-up in
  super-critical cases to degenerate keller-segel systems.
\newblock {\em Differential Integral Equations}, 19(no. 8):841--876, 2006.

\bibitem{topicsinOT}
C.~Villani.
\newblock {\em Topics in optimal transportation}, volume~58 of {\em Graduate
  Studies in Mathematics}.
\newblock American Mathematical Society, Providence, RI, 2003.

\bibitem{oldandnew}
C.~Villani.
\newblock {\em Optimal transport. Old and new}, volume 338 of {\em Grundlehren
  der Mathematischen Wissenschaften}.
\newblock Springer-Verlag, Berlin, 2009.

\bibitem{Yao13}
Y.~Yao.
\newblock Asymptotic behavior for critical {P}atlak-{K}eller-{S}egel model and
  a repulsive-attractive aggregation equation.
\newblock {\em Ann. Inst. H. Poincar{\'e} Anal. Non Lin{\'e}aire},
  31(1):81--101, 2014.

\end{thebibliography}

\end{document}